\theoremstyle{definition}
\newtheorem{thm}{Theorem}[section]
\newtheorem{prop}[thm]{Proposition}
\newtheorem{lem}[thm]{Lemma}
\newtheorem{cor}[thm]{Corollary}
\newtheorem{prop-def}[thm]{Proposition-Definition}
\newtheorem{defn}[thm]{Definition}
\newtheorem{remark}[thm]{Remark}
\newcommand{\nc}{\newcommand}
\nc{\delete}[1]{{}}
\nc{\mmargin}[1]{}
\nc{\Alg}{\mathrm{Alg}}
\nc{\NjO}{\mathrm{NjO}}
\nc{\NjA}{\mathrm{NjA}}
\nc{\NjL}{\mathrm{NjL}}
\nc{\FN}{\mathrm{FN}}
\nc{\RN}{\mathrm{RN}}
\nc{\G}{\mathrm{G}}
\nc{\rmH}{\mathrm{H}}
\nc{\DT}{\mathrm{DT}}
\nc{\ac}{\mathrm{\textup{!`}}}
	\nc{\mlabel}[1]{\label{#1}} 
	\nc{\mcite}[1]{\cite{#1}} 
	\nc{\mref}[1]{\ref{#1}} 
	\nc{\mbibitem}[1]{\bibitem{#1}} 
	\nc{\mlabel}[1]{\label{#1} 
		{\hfill \hspace{1cm}{\bf{{\ }\hfill(#1)}}}}
	\nc{\mcite}[1]{\cite{#1}{{\bf{{\ }(#1)}}}} 
	\nc{\mref}[1]{\ref{#1}{{\bf{{\ }(#1)}}}} 
	\nc{\mbibitem}[1]{\bibitem[\bf #1]{#1}} 
 \font\cyrs=wncyr7
\nc{\vep}{\varepsilon}
\nc{\bin}[2]{ (_{\stackrel{\scs{#1}}{\scs{#2}}})} 
\nc{\binc}[2]{(\!\! \begin{array}{c} \scs{#1}\\
		\scs{#2} \end{array}\!\!)} 
\nc{\bincc}[2]{ ( {\scs{#1} \atop
		\vspace{-1cm}\scs{#2}} )} 
\nc{\oline}[1]{\overline{#1}}
\nc{\mapm}[1]{\lfloor\!|{#1}|\!\rfloor}
\nc{\bs}{\bar{S}}
\nc{\la}{\longrightarrow}
\nc{\ot}{\otimes}
\nc{\rar}{\rightarrow}
\nc{\lon }{\, \rightarrow\, }
\nc{\dar}{\downarrow}
\nc{\dap}[1]{\downarrow \rlap{$\scriptstyle{#1}$}}
\nc{\defeq}{\stackrel{\rm def}{=}}
\nc{\dis}[1]{\displaystyle{#1}}
\nc{\dotcup}{\ \displaystyle{\bigcup^\bullet}\ }
\nc{\hcm}{\ \hat{, }\ }
\nc{\hts}{\hat{\otimes}}
\nc{\hcirc}{\hat{\circ}}
\nc{\lleft}{[}
\nc{\lright}{]}
\nc{\curlyl}{\left \{ \begin{array}{c} {} \\ {} \end{array}
	\right. \!\!\!\!\!\!\!}
\nc{\curlyr}{ \!\!\!\!\!\!\!
	\left. \begin{array}{c} {} \\ {} \end{array}
	\right \} }
\nc{\longmid}{\left | \begin{array}{c} {} \\ {} \end{array}
	\right. \!\!\!\!\!\!\!}
\nc{\ora}[1]{\stackrel{#1}{\rar}}
\nc{\ola}[1]{\stackrel{#1}{\la}}
\nc{\scs}[1]{\scriptstyle{#1}} \nc{\mrm}[1]{{\rm #1}}
\nc{\dirlim}{\displaystyle{\lim_{\longrightarrow}}\, }
\nc{\invlim}{\displaystyle{\lim_{\longleftarrow}}\, }
\nc{\dislim}[1]{\displaystyle{\lim_{#1}}} \nc{\colim}{\mrm{colim}}
\nc{\mvp}{\vspace{0.3cm}} \nc{\tk}{^{(k)}} \nc{\tp}{^\prime}
\nc{\ttp}{^{\prime\prime}} \nc{\svp}{\vspace{2cm}}
\nc{\vp}{\vspace{8cm}}
\nc{\modg}[1]{\!<\!\!{#1}\!\!>}
\nc{\intg}[1]{F_C(#1)}
\nc{\lmodg}{\!<\!\!}
\nc{\rmodg}{\!\!>\!}
\nc{\cpi}{\widehat{\Pi}}
\nc{\ssha}{{\mbox{\cyrs X}}} 
\nc{\tsha}{{\mbox{\cyrt X}}}
\nc{\shpr}{\diamond} 
\nc{\labs}{\mid\!}
\nc{\rabs}{\!\mid}
\nc{\RBO}{{\mathrm{RBO}_\lambda}}
\nc{\Sh}{\mathrm{Sh}}
\nc{\sh}{\mathrm{\overline{Sh}}}
\nc{\RBA}{{\mathrm{RBA}_\lambda}}
\nc{\sgn}{\mathrm{sgn}}
\nc{\rH}{\mathrm{H}}
\nc{\ad}{\mrm{ad}}
\nc{\ann}{\mrm{ann}}
\nc{\Aut}{\mrm{Aut}}
\nc{\bbZ}{\mathbb{Z}}
\nc{\bim}{\mbox{-}\mathsf{Bimod}}
\nc{\br}{\mrm{bre}}
\nc{\can}{\mrm{can}}
\nc{\Cont}{\mrm{Cont}}
\nc{\rchar}{\mrm{char}}
\nc{\cok}{\mrm{coker}}
\nc{\de}{\mrm{dep}}
\nc{\dtf}{{R-{\rm tf}}}
\nc{\dtor}{{R-{\rm tor}}}
\nc{\Div}{{\mrm Div}}
\nc{\Diff}{\mrm{DA}}
\nc{\Diffl}{\mathsf{DA}_\lambda}
\nc{\diffo}{{\mathsf{DO}_\lambda}}
\nc{\alg}{\mathsf{Alg}}
\nc{\End}{\mrm{End}}
\nc{\Alt}{\mrm{Alt}}
\nc{\Ext}{\mrm{Ext}}
\nc{\Fil}{\mrm{Fil}}
\nc{\Fr}{\mrm{Fr}}
\nc{\Frob}{\mrm{Frob}}
\nc{\Gal}{\mrm{Gal}}
\nc{\GL}{\mrm{GL}}
\nc{\Hom}{\mrm{Hom}}
\nc{\Hoch}{\mrm{Hoch}}
\nc{\C}{\mrm{C}}
\nc{\hsr}{\mrm{H}}
\nc{\hpol}{\mrm{HP}}
\nc{\id}{\mrm{Id}}
\nc{\im}{\mrm{im}}
\nc{\Id}{\mrm{Id}}
\nc{\ID}{\mrm{ID}}
\nc{\Irr}{\mrm{Irr}}
\nc{\incl}{\mrm{incl}}
\nc{\length}{\mrm{length}}
\nc{\NLSW}{\mrm{NLSW}}
\nc{\Lie}{\mrm{Lie}}
\nc{\mchar}{\rm char}
\nc{\mpart}{\mrm{part}}
\nc{\ql}{{\QQ_\ell}}
\nc{\qp}{{\QQ_p}}
\nc{\rank}{\mrm{rank}}
\nc{\rcot}{\mrm{cot}}
\nc{\rdef}{\mrm{def}}
\nc{\rdiv}{{\rm div}}
\nc{\rtf}{{\rm tf}}
\nc{\rtor}{{\rm tor}}
\nc{\res}{\mrm{res}}
\nc{\SL}{\mrm{SL}}
\nc{\Spec}{\mrm{Spec}}
\nc{\tor}{\mrm{tor}}
\nc{\Tr}{\mrm{Tr}}
\nc{\tr}{\mrm{tr}}
\nc{\wt}{\mrm{wt}}
\def\ot{\otimes}
\nc{\bfk}{{\bf k}}
\nc{\bfone}{{\bf 1}}
\nc{\bfzero}{{\bf 0}}
\nc{\detail}{\marginpar{\bf More detail}
	\noindent{\bf Need more detail!}
	\svp}
\nc{\gap}{\marginpar{\bf Incomplete}\noindent{\bf Incomplete!!}
	\svp}
\nc{\FMod}{\mathbf{FMod}}
\nc{\Int}{\mathbf{Int}}
\nc{\Mon}{\mathbf{Mon}}
\nc{\sproof}{\noindent{ \textit{Sketch of Proof:} }}
\nc{\remarks}{\noindent{\bf Remarks: }}
\nc{\Rep}{\mathbf{Rep}}
\nc{\Rings}{\mathbf{Rings}}
\nc{\Sets}{\mathbf{Sets}}
\nc{\ob}{\mathsf{Ob}}
\nc{\BA}{{\mathbb A}} \nc{\CC}{{\mathbb C}}
\nc{\DD}{{\mathbb D}} \nc{\EE}{{\mathbb E}}
\nc{\FF}{{\mathbb F}} \nc{\GG}{{\mathbb G}}
\nc{\LL}{{\mathbb L}}
\nc{\NN}{{\mathbb N}} \nc{\PP}{{\mathbb P}}
\nc{\QQ}{{\mathbb Q}} \nc{\RR}{{\mathbb R}}
\nc{\TT}{{\mathbb T}} \nc{\VV}{{\mathbb V}}
\nc{\ZZ}{{\mathbb Z}} \nc{\TP}{\widetilde{P}}
\nc{\m}{{\mathbbm m}}
\nc{\cala}{{\mathcal A}} \nc{\calc}{{\mathcal C}}
\nc{\cald}{\mathcal{D}} \nc{\cale}{{\mathcal E}}
\nc{\calF}{{\mathcal F}} \nc{\calg}{{\mathcal G}}
\nc{\calh}{{\mathcal H}} \nc{\cali}{{\mathcal I}}
\nc{\call}{{\mathcal L}} \nc{\calm}{{\mathcal M}}
\nc{\caln}{{\mathcal N}} \nc{\calo}{{\mathcal O}}
\nc{\calp}{{\mathcal P}} \nc{\calr}{{\mathcal R}}
\nc{\cals}{{\mathcal S}} \nc{\calt}{{\Omega}}
\nc{\calv}{{\mathcal V}} \nc{\calw}{{\mathcal W}}
\nc{\calx}{{\mathcal X}}
\nc{\fraka}{{\mathfrak a}}
\nc{\frakb}{\mathfrak{b}}
\nc{\frakC}{\mathfrak{C}}
\nc{\frakF}{\mathfrak{F}}
\nc{\frakg}{{\frak g}}
\nc{\frakl}{{\frak l}}
\nc{\frakh}{{\frak h}}
\nc{\fraks}{{\frak s}}
\nc{\frakB}{{\frak B}}
\nc{\frakm}{{\frak m}}
\nc{\frakM}{{\frak M}}
\nc{\frakp}{{\frak p}}
\nc{\frakW}{{\frak W}}
\nc{\frakX}{{\frak X}}
\nc{\frakS}{{\frak S}}
\nc{\frakT}{{\frak T}}
\nc{\frakA}{{\frak A}}
\nc{\frakL}{{\frak L}}
\nc{\frakx}{{\frakx}}
\nc{\RB}{\mathfrak{RB}}
\nc{\NjAoperad}{\mathfrak{NjA}}
\nc{\NjAModoperad}{\mathfrak{NjA\hspace{-0.15cm}+\hspace{-0.15cm}Mod}}
\nc{\NjAMoroperad}{\mathfrak{NjA\hspace{-0.15cm}+\hspace{-0.15cm}Mor}}
\nc{\NL}{\mathfrak{NjL}}
\nc{\lbar}[1]{\overline{#1}}
\nc{\ra}{\rightarrow}
\nc{\tred}[1]{\textcolor{red}{#1}} \nc{\tgreen}[1]{\textcolor{green}{#1}}
\nc{\tblue}[1]{\textcolor{blue}{#1}} \nc{\tpurple}[1]{\textcolor{purple}{#1}}
\nc{\blue}{{\color{blue}{\mathrm{blue}}}}
\nc{\red}{{\color{red}{\mathrm{red}}}}
\nc{\black}{\mathrm{black}}
\begin{document}
	
\title[Homotopy Nijenhuis algebras]{Deformations and homotopy theory of Nijenhuis associative algebras}

\author{Chao Song, Kai Wang, Yuanyuan Zhang and Guodong Zhou}

\date{\today}

\address{Chao Song, School of Mathematical Sciences, East China Normal University, Shanghai 200241, China
}
\email{52265500011@stu.ecnu.edu.cn}

\address{Guodong Zhou, School of Mathematical Sciences, Key Laboratory of Mathematics and Engineering Applications (Ministry of Education), Shanghai Key Laboratory of PMMP, East China Normal University, Shanghai 200241, China
}
\email{gdzhou@math.ecnu.edu.cn}

\address{Kai Wang, School of Mathematical Sciences, University of Science and Technology of China, Hefei, Anhui Provience 230026, China}
	\email{wangkai17@ustc.edu.cn}

\address{Yuanyuan Zhang, School of Mathematics and Statistics, Henan University, Henan, Kaifeng 475004, China
}
\email{zhangyy17@henu.edu.cn}

\begin{abstract}

This paper is the first in a series of works devoted to an operadic study of Nijenhuis structures, focusing on Nijenhuis associative algebras.
We introduce the concept of homotopy Nijenhuis associative algebras and demonstrate that the differential graded (=dg) operad $\NjAoperad_{\infty}$ governing these structures serves as the minimal model of the operad $\NjAoperad$ for Nijenhuis associative algebras. Additionally, we determine the Koszul dual homotopy cooperad of $\NjAoperad$.
We construct an $L_\infty$-algebra that controls the simultaneous deformations of associative products and Nijenhuis operators. The Maurer-Cartan elements of this $L_\infty$-algebra correspond bijectively to Nijenhuis associative algebra structures. From this, we derive a cochain complex (deformation complex) and an associated cohomology theory of Nijenhuis associative algebras.
Finally, we explore the connection between homotopy relative Rota-Baxter associative algebras of weight $0$ and homotopy Nijenhuis associative algebras. A sequel to this work will extend the study to Nijenhuis Lie algebras, with applications to Nijenhuis geometry.

\end{abstract}

\subjclass[2020]{
	16E40 
	16S80 
	17B38 
	18M60 
	18M65 
	18M70 
}

\keywords{Cohomology, deformation complex, homotopy Nijenhuis associative algebra, Koszul dual homotopy cooperad, minimal model, Nijenhuis associative algebra, operad}

\maketitle

\tableofcontents

\allowdisplaybreaks
	
\section*{Introduction}

A Nijenhuis operator on a Lie algebra $(\frakg, [-, -])$ is a linear endomorphism $P$ satisfying the following Nijenhuis relation:
\begin{align} \label{Nijenhuis relation}
	[P(a), P(b)]= P( [a, P(b)] + [P(a), b] -P[a, b]), \text{ for } a, b \in \frakg.
\end{align}
The concept of a Nijenhuis operator on a Lie algebra originated from the foundational notion of a Nijenhuis tensor, introduced by Nijenhuis \cite{Nij51} in the study of pseudo-complex manifolds. Nijenhuis operators on Lie algebras were later explored in \cite{MM84, Kos90} within the broader context of Poisson-Nijenhuis manifolds. On manifolds, Nijenhuis operators act as unary operations on the Lie algebra of vector fields and satisfy the corresponding Nijenhuis relation \eqref{Nijenhuis relation}, which is the simplest geometric condition for $(1, 1)$-tensors, as outlined in the philosophy of \cite{BKM22}. In 1957, Newlander and Nirenberg \cite{NN57} proved that an almost complex manifold $(M, J)$ is a complex manifold if and only if $J$ is a Nijenhuis operator, as also noted in \cite[Theorem 1.2, Page356]{Hel78}. This provides a significant example of a Nijenhuis operator.


The concept of Nijenhuis operators on associative algebras, introduced by Carinena et al. \cite{CGM00} to study quantum bi-Hamiltonian systems, is an analogue of the Nijenhuis operators on Lie algebras and manifolds (see Definition~\ref{Def: Nijenhuis associative algebra}).
In \cite{EbrahimiFard2004}, Ebrahimi-Fard interpreted the associative analogue of the Nijenhuis relation as the homogeneous version of the Rota-Baxter relation. Building on this perspective, he used the augmented modified quasi-shuffle product to construct free Nijenhuis associative algebras. Guo and Lei \cite{LeiGuo} provided an explicit construction of free Nijenhuis associative algebras using bracketed words, and applied this construction to derive the universal enveloping Nijenhuis associative algebra of an NS algebra. An important example of the Nijenhuis operator on associative algebras is given by Das \cite[2.5 Proposition]{Das2020}, which establishes a connection between relative Rota-Baxter operators of weight 0 and Nijenhuis operators. And this connection will be generalized to homotopy level in this paper.

We focus on the deformation and homotopy theory of Nijenhuis structures. Specifically, in this paper, we will study the deformation and homotopy theory of Nijenhuis associative structures. In subsequent work, we will consider these problems for Nijenhuis structures on Lie algebras and manifolds, as well as their applications in geometry.

A central philosophy in deformation theory, inspired by the foundational work of Gerstenhaber, Nijenhuis, Richardson, Deligne, Schlessinger, Stasheff, Goldman, Millson, and others, is that the deformations of a given mathematical structure can be described by a differential graded (dg) Lie algebra or, more generally, an $L_\infty$-algebra. The underlying complex of this algebra is referred to as the deformation complex. Lurie \cite{Lurie} and Pridham \cite{Pri10} formalized this idea rigorously, establishing it as an equivalence of infinity categories in characteristic zero. Consequently, an essential problem in deformation theory is the explicit construction of the dg Lie algebra or $L_\infty$-algebra that governs the deformation theory of the structure under study.

Another significant challenge in the study of algebraic structures is understanding their homotopy analogs, such as $A_\infty$-algebras for associative algebras and $L_\infty$-algebras for Lie algebras. Operad theory provides a systematic framework for this task, requiring the formulation of a cofibrant resolution of the operad describing the algebraic structure. Ideally, this resolution takes the form of a minimal model of the operad. If the operad is Koszul, Koszul duality theory \cite{GJ94, GK94} offers a powerful method to define the homotopy version of the structure through the cobar construction of the Koszul dual cooperad, which in this case is itself a minimal model.
However, for operads that are not Koszul, significant challenges emerge, and examples of minimal models in this setting remain scarce. This paper contributes to addressing these difficulties in the context of Nijenhuis structures.

These two problems, say, describing controlling $L_\infty$-algebras and constructing homotopy versions of algebraic structures, are deeply interconnected. Specifically, given a cofibrant resolution, particularly a minimal model of the operad in question, one can derive the deformation complex of the algebraic structure and endow it with an $L_\infty$-structure, as demonstrated by Kontsevich and Soibelman \cite{KS00} and van der Laan \cite{VdLa, VdLb}.
In practice, however, finding a minimal model or a small cofibrant resolution is often a nontrivial challenge. Recently, Wang and Zhou \cite{WZ24} successfully addressed both of these problems for Rota-Baxter associative algebras of arbitrary weight, while Chen, Guo, Wang, and Zhou \cite{CGWZ24} extended these results to differential algebras with nonzero weight.
In this paper and its sequel, we extend these methods, with significant modifications, to study the operad governing Nijenhuis structures.



In this paper, we exhibit the deformation complex of a Nijenhuis associative algebra as well as the controlling $L_\infty$-structure in the sense of deformation theory. We succeed in finding the minimal model and the Koszul dual homotopy cooperad of the operad for Nijenhuis associative algebras, providing justification for the deformation complex as well as the controlling $L_\infty$-structure.

This paper is organized as follows.
In Section~\ref{Nijenhuis associative algebras and Nijenhuis bimodules}, we recall basic definitions and key facts about Nijenhuis associative algebras. In Section~\ref{Sect: Cohomology theory of Nijenhuis associative algebras}, we introduce a cochain complex of Nijenhuis operators, whose homology controls the deformation of Nijenhuis operators. We then combine this cochain complex with the usual Hochschild cochain complex of associative algebras to define a cochain complex for Nijenhuis associative algebras, whose homology controls the deformations of Nijenhuis associative algebras. Section~\ref{Section: Koszul dual cooperad NjA} introduces a homotopy cooperad, and in Section~\ref{Section: minimal model NjA}, we show that the cobar construction of this homotopy cooperad, using a new monomial order and the method in \cite{WZ24}, is the minimal model of the operad for Nijenhuis associative algebras. As a result, this homotopy cooperad can be viewed as the Koszul dual of the operad for Nijenhuis associative algebras. The concept of homotopy Nijenhuis associative algebras is also introduced. In Section~\ref{Section: From minimal model to Linifnity algebras NjA}, we construct the $L_\infty$-algebra structure on the deformation complex introduced in Section~\ref{Sect: Cohomology theory of Nijenhuis associative algebras}, and Nijenhuis associative algebra structures are realized as the Maurer-Cartan elements of this $L_\infty$-algebra. Finally, in Section~\ref{Section: relation between HrelRBA and HNjA}, we establish a relationship between homotopy Rota-Baxter associative algebras of weight $0$ and homotopy Nijenhuis associative algebras, which generalizes a result of Das \cite{Das2020} to homotopy level.

Throughout this paper, let $\bfk$ be a field of characteristic $0$. Unless otherwise specified, all vector spaces are assumed to be $\bfk$-vector spaces, and all tensor products and $\Hom$-spaces are taken over $\bfk$. Additionally, all algebras are defined over $\bfk$.
For foundational concepts on operads and colored operads, we refer the reader to \cite{MSS02, Mar08, LV12, BD16, Yau16}. Familiarity with the basic theory of $L_\infty$-algebras \cite{Sta92, LS93, LM95} and homotopy (co)operads \cite{Mar96, MV09a, MV09b, DP16} is also assumed. For a concise overview, we recommend \cite[Sections 2.2 and 3.1]{WZ24}.
 
\medskip

\section{Nijenhuis associative algebras and Nijenhuis bimodules} \label{Nijenhuis associative algebras and Nijenhuis bimodules}

In this section, we recall some basic concepts and facts about Nijenhuis associative algebras and their Nijenhuis bimodules.

\begin{defn} \label{Def: Nijenhuis associative algebra}
	Let $(A, m=\cdot)$ be an associative algebra. A linear operator $P: A \rightarrow A$ is a \textit{Nijenhuis operator} if it satisfies
	\begin{eqnarray*} \label{Eq: Nijenhuis relation}
		P(a) \cdot P(b) = P \big(P(a) \cdot b + a \cdot P(b) - P( a \cdot b)\big)
	\end{eqnarray*}	
	for any $a, b \in A$, or in terms of maps
	\begin{eqnarray} \label{Eq: Nijenhuis relation in terms of maps}
		m \circ (P \ot P)=P\circ \big(m \circ (P \ot \Id) + m \circ (\Id\ot P) - P\circ m \big).
	\end{eqnarray}
	In this case, $A = (A, m, P)$ is called a \textit{Nijenhuis associative algebra}.

	Let $B=(B, m_{B}, P_{B})$ and $A=(A, m_A, P_{A})$ be two Nijenhuis associative algebras. A morphism of associative algebras $f: B \rightarrow A$ is called a \textit{morphism of Nijenhuis associative algebras}, if $ P_{A} \circ f = f \circ P_{B}$.

\end{defn}

\begin{defn} \label{Def: Nijenhuis bimodules}
	Let $A = (A, m, P)$ be a Nijenhuis associative algebra and $M$ be a bimodule over the associative algebra $(A, m)$.
	Then $M$ is said to be a \textit{Nijenhuis bimodule} over Nijenhuis associative algebra $A$, if $M$ is endowed with a linear operator $P_M: M \rightarrow M$ such that the following equations
	\begin{align} \label{eq:left action}
		P(a) P_M(x) &= P_M \big(P(a)x + aP_M(x) - P_M (ax)\big), \end{align}
	\begin{align} \label{eq:right action}	
		P_M(x) P(a) &= P_M \big(P_M(x)a + xP(a) - P_M (xa)\big),
	\end{align}
	hold for any $a\in A$ and $x\in M$.
\end{defn}

Obviously, a Nijenhuis associative algebra $A = (A, m, P)$ is a Nijenhuis bimodule over itself, called the \textit{regular Nijenhuis bimodule}. 

Let $(A, m)$ be an associative algebra and $M$ be a bimodule over it. It is well known that $A\oplus M$ becomes an associative algebra whose product is given by
	\begin{eqnarray*}
		(a, x) \cdot_{\ltimes} (b, y) := (a\cdot b, ay+xb),
	\end{eqnarray*}
	for any $a, b \in A$ and $x, y \in M$. This is called the \textit{semi-direct product (or trivial extension)} of associative algebra $(A, m)$ by bimodule $M$, denoted by $A\ltimes M$.
 Write $\iota: A \rightarrow A\oplus M, a\mapsto (a, 0)$ and $\pi: A\oplus M \rightarrow A, (a, x)\mapsto a$, which are obviously associative algebra morphisms.

 The following results are direct consequences of the definitions, whose proofs are left to the reader.
\begin{prop} \label{Prop: trivial extension of Nijenhuis bimodule}
	Let $A = (A, m, P)$ be a Nijenhuis associative algebra and $M$ be a bimodule over associative algebra $(A, m)$. Then the semi-direct product $A\ltimes M$ of $A$ by $M$ is a Nijenhuis associative algebra such that $\iota$ and $\pi$ are both morphisms of Nijenhuis associative algebras if and only if $M$ is a Nijenhuis bimodule over $A$.
	
	This new Nijenhuis associative algebra will still be denoted by $A\ltimes M$, called the \textit{(Nijenhuis) semi-direct product (or trivial extension)} of Nijenhuis associative algebra $A$ by Nijenhuis bimodule $M$.
\end{prop}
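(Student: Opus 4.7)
The plan is to directly compute the Nijenhuis relation on the semi-direct product and match it, coordinate by coordinate, against the defining equations of a Nijenhuis bimodule. Before that, I observe that the compatibility conditions $\tilde P\circ\iota=\iota\circ P$ and $\pi\circ\tilde P=P\circ\pi$ force any Nijenhuis operator $\tilde P$ on $A\oplus M$ making $\iota$ and $\pi$ Nijenhuis morphisms to have the form
\[
\tilde P(a,x)=(P(a),P_M(x))
\]
for some linear map $P_M\colon M\to M$: the $\pi$-condition pins down the first coordinate as $P(a)$, and the $\iota$-condition forces the second coordinate to vanish when $x=0$, so it depends only on $x$. Hence, designating a Nijenhuis structure on $A\ltimes M$ compatible with $\iota$ and $\pi$ is the same as choosing a linear map $P_M$ on $M$.

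Next, I would expand both sides of the Nijenhuis relation \eqref{Eq: Nijenhuis relation} for $\tilde P$ on generic elements $(a,x),(b,y)\in A\ltimes M$, using the product $\cdot_\ltimes$. On the left,
\[
\tilde P(a,x)\cdot_\ltimes\tilde P(b,y)=\bigl(P(a)P(b),\,P(a)P_M(y)+P_M(x)P(b)\bigr).
\]
On the right, the $A$-component collects to $P\bigl(P(a)b+aP(b)-P(ab)\bigr)$, which by the Nijenhuis relation on $A$ already equals $P(a)P(b)$. The $M$-component of the right-hand side evaluates to
\[
P_M\bigl(P(a)y+aP_M(y)-P_M(ay)\bigr)+P_M\bigl(P_M(x)b+xP(b)-P_M(xb)\bigr).
\]

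Consequently, $\tilde P$ is Nijenhuis if and only if, for all $a,b\in A$ and $x,y\in M$,
\[
P(a)P_M(y)+P_M(x)P(b)=P_M\bigl(P(a)y+aP_M(y)-P_M(ay)\bigr)+P_M\bigl(P_M(x)b+xP(b)-P_M(xb)\bigr).
\]
Setting $x=0$ isolates the left action identity \eqref{eq:left action}, and setting $y=0$ isolates the right action identity \eqref{eq:right action}; conversely, if both hold, their sum is precisely the displayed identity. This equivalence gives both directions of the proposition, and yields the asserted Nijenhuis semi-direct product structure on $A\ltimes M$.

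The argument is essentially a bookkeeping computation, so there is no real obstacle; the only point that requires a moment of care is the initial reduction showing that any compatible $\tilde P$ must split as $(P,P_M)$ with no cross term from $A$ into $M$. Once this normal form is established, the rest is bilinear algebra, and the split into \eqref{eq:left action} and \eqref{eq:right action} follows cleanly by specializing the $M$-component equation at $x=0$ and $y=0$.
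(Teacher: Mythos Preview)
Your argument is correct and is exactly the straightforward verification the paper has in mind; indeed the paper does not give a proof at all, stating only that ``the following results are direct consequences of the definitions, whose proofs are left to the reader.'' Your reduction of any compatible $\tilde P$ to the block form $(P,P_M)$ via the $\iota$- and $\pi$-conditions, followed by the coordinatewise expansion of the Nijenhuis identity and the specialization $x=0$, $y=0$, is the natural way to fill in this omitted proof.
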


Nijenhuis associative algebras and Nijenhuis bimodules have some descendent properties.

\begin{prop}\cite{CGM00} 
	\label{Prop: new Nj algebra}
	Let $A = (A, m, P)$ be a Nijenhuis associative algebra. Define a new binary operation $m_P=\cdot_P$ over $A$ as:
	\begin{eqnarray*}
		a \cdot_P b := P(a)\cdot b + a\cdot P(b) - P(a\cdot b),
	\end{eqnarray*}
	for any $a, b\in A$. Then
	\begin{enumerate}
		\item the operation $m_P $ is associative and $(A, m_P)$ is a new associative algebra;
		\item the triple $(A, m_P, P)$ also forms a Nijenhuis associative algebra and denote it by $A_P $;
		\item the map $P: A_P=(A, m_P, P)\rightarrow A = (A, m, P)$ is a morphism of Nijenhuis associative algebras.
	\end{enumerate}
\end{prop}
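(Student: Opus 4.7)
The plan is to prove all three assertions by direct manipulation using the Nijenhuis relation \eqref{Eq: Nijenhuis relation in terms of maps}, bootstrapping from a single key identity that will streamline everything.

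The central observation to establish first is that $P(a \cdot_P b) = P(a) \cdot P(b)$ for all $a, b \in A$; this is simply the Nijenhuis relation read as a statement about $m_P$, since by definition
\[
P(a \cdot_P b) = P\big(P(a) \cdot b + a \cdot P(b) - P(a\cdot b)\big) = P(a) \cdot P(b).
\]
This identity instantly gives (iii), because it says precisely $P \circ m_P = m \circ (P \otimes P)$, and combined with $P \circ P = P \circ P$ it makes $P : A_P \to A$ a morphism of Nijenhuis associative algebras once (ii) is proved.

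For (i), I would expand $(a \cdot_P b) \cdot_P c$ and $a \cdot_P (b \cdot_P c)$ directly from the definition of $\cdot_P$. The key identity collapses the terms $P(a \cdot_P b)$ and $P(b \cdot_P c)$ appearing inside the outer product to $P(a) \cdot P(b)$ and $P(b) \cdot P(c)$ respectively, and associativity of $m$ eliminates most terms immediately. What remains is a difference of the form
\[
\big[P(a) \cdot P(bc) - P(ab) \cdot P(c)\big] - P\big(P(a) \cdot b\cdot c\big) + P\big(P(ab)\cdot c\big) + P\big(a\cdot b\cdot P(c)\big) - P\big(a \cdot P(bc)\big),
\]
which vanishes after applying the Nijenhuis relation once to each of $P(a)\cdot P(bc)$ (with inputs $a$ and $bc$) and $P(ab)\cdot P(c)$ (with inputs $ab$ and $c$). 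This is the only clever step of the argument for associativity.

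For (ii), I would expand the desired relation $P(a) \cdot_P P(b) = P\big(P(a) \cdot_P b + a \cdot_P P(b) - P(a \cdot_P b)\big)$ on both sides. The key identity reduces $P(a \cdot_P b)$ to $P(a) \cdot P(b)$, producing sizable cancellations. After this, the verification reduces to an identity involving $P^2(a) \cdot P(b)$, $P(a) \cdot P^2(b)$, and $P^3(ab)$-type terms, which is dispatched by two applications of the Nijenhuis relation (once with inputs $(P(a), b)$, once with $(a, P(b))$) followed by a final use with $(a,b)$ to rewrite $P(P(a)\cdot P(b))$. The main bookkeeping hazard throughout will be keeping track of the many $P^2$ and $P^3$ terms and matching them correctly; however, the algorithm is mechanical and the cancellations are clean once the key identity is invoked at the right moments.
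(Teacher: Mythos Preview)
Your proposal is correct. The paper does not supply its own proof of this proposition; it simply attributes the result to \cite{CGM00}, so there is no argument in the paper to compare against. Your key identity $P(a\cdot_P b)=P(a)\cdot P(b)$ is exactly the right engine, and your verification of (i) goes through as outlined. One small simplification for (ii): once you have the key identity, the Nijenhuis relation for $(A,m_P,P)$ is immediate without ever producing $P^3$-terms. Indeed,
\[
P\big(P(a)\cdot_P b + a\cdot_P P(b) - P(a\cdot_P b)\big)
= P(P(a))\cdot P(b) + P(a)\cdot P(P(b)) - P\big(P(a)\cdot P(b)\big),
\]
where each summand on the right comes from a single application of the key identity (the last one after first rewriting $P(a\cdot_P b)=P(a)\cdot P(b)$); and the right-hand side is precisely $P(a)\cdot_P P(b)$ by the definition of $\cdot_P$. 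So no further uses of the Nijenhuis relation for $m$ are needed in (ii).
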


One can also obtain new Nijenhuis bimodules from old ones.

\begin{prop}[{Compare with \cite[2.8 Example]{Das2020 twisted}}]\label{Prop: new-bimodule}
	Let $A = (A, m, P)$ be a Nijenhuis associative algebra and $M = (M, P_M)$ be a Nijenhuis bimodule over it. We define a left action $``\rhd"$ and a right action $``\lhd"$ of $A$ on $M$ as follows:
	\begin{align*}
		a\rhd x :=P(a)x \quad \mathrm{and} \quad x\lhd a :=xP(a)
	\end{align*}
	for any $a\in A$ and $ x\in M$.
	Then these actions make $M$ into a Nijenhuis bimodule over $A_P $, and we denote this new bimodule by $M_P$.
\end{prop}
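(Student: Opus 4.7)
The plan is to unpack the definitions and verify, in order, (1) that the new actions $\rhd$ and $\lhd$ make $M$ into a bimodule over the associative algebra $(A, m_P)$, and (2) that the operator $P_M$ satisfies the Nijenhuis bimodule axioms \eqref{eq:left action} and \eqref{eq:right action} with respect to this new bimodule structure and the Nijenhuis operator $P$ on $A_P$.

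For (1), the essential step is the left-action compatibility with $\cdot_P$: starting from
\[ (a \cdot_P b) \rhd x = P\bigl(P(a)b + aP(b) - P(ab)\bigr)\,x, \]
the Nijenhuis relation \eqref{Eq: Nijenhuis relation in terms of maps} on $A$ collapses the right-hand side to $\bigl(P(a)P(b)\bigr)x$, which in turn equals $a \rhd (b \rhd x)$ by associativity of the original left action of $(A, m)$ on $M$. The right-action compatibility is strictly symmetric. The mixed compatibility $(a \rhd x) \lhd b = a \rhd (x \lhd b)$ reduces at once to $(P(a)x)P(b) = P(a)(xP(b))$, which is the original bimodule axiom for $M$ over $(A, m)$ applied to $P(a), x, P(b)$.

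For (2), after substituting the definitions $a \rhd x = P(a)x$ and $x \lhd a = xP(a)$, the condition \eqref{eq:left action} for $M_P$ over $A_P$ reads
\[ P(P(a))\,P_M(x) = P_M\bigl(P(P(a))\,x + P(a)\,P_M(x) - P_M(P(a)\,x)\bigr), \]
and \eqref{eq:right action} becomes the symmetric statement. These are precisely the original Nijenhuis bimodule hypotheses on $M$ over $A$ with the element $a \in A$ replaced by $P(a) \in A$; since those hypotheses are universally quantified over $A$, specialization finishes the argument. There is no substantive obstacle: the result is formal and the Nijenhuis relation on $A$ is invoked in exactly one place, namely to identify $P(a \cdot_P b)$ with $P(a)P(b)$. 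The underlying reason is that passing from $(A, m)$ to $(A, m_P)$ twists both multiplication and the actions through $P$, and this twist turns each Nijenhuis module axiom into an instance of itself.
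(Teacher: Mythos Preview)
Your proof is correct and is precisely the direct verification the paper has in mind: the paper does not write out a proof for this proposition, treating it (along with the surrounding results) as an immediate consequence of the definitions and pointing to \cite[2.8 Example]{Das2020 twisted} for comparison. Your two-step check---bimodule axioms via the Nijenhuis relation $P(a\cdot_P b)=P(a)P(b)$, then the Nijenhuis bimodule axioms by specializing $a\mapsto P(a)$---is exactly the intended argument.
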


Let's describe the operad for Nijenhuis associative algebras.
\begin{defn} \label{def: operad NjA}
	The \textit{(nonsymmetric) operad for Nijenhuis associative algebras}, denoted by $\NjAoperad$, is generated by a unary operator $P$ and a binary operator $m$ with the operadic relations:
	\begin{align}\label{Eq: operadic associativity}
		m \circ_1 m - m \circ_2 m
	\end{align}
	and
	\begin{align}\label{Eq: operadic Nijenhuis relation}
		(m \circ_1 P)\circ_2 P-(P \circ_1 m)\circ_1 P-(P \circ_1 m)\circ_2 P + P \circ_1 (P \circ_1 m).
	\end{align}

In other words, $\NjAoperad=\mathcal{F}(\bfk m\oplus \bfk P)/I,$ where for a collection $M$, $\mathcal{F}(M)$ is the free operad generated by $M$, and $I$ is the operadic ideal generated by \eqref{Eq: operadic associativity} and \eqref{Eq: operadic Nijenhuis relation}.

Here, Relation~\eqref{Eq: operadic associativity} is the associativity axiom for the product $m$ and Relation~\eqref{Eq: operadic Nijenhuis relation} corresponds to the defining relation \eqref{Eq: Nijenhuis relation in terms of maps} of the Nijenhuis operator $P$.
\end{defn}

\begin{remark}
	Since the defining relation \eqref{Eq: operadic Nijenhuis relation} of the Nijenhuis operator is cubic, it is obvious that the operad $\NjAoperad$ is not Koszul.
	So the classical Koszul duality theory of operads \cite{GJ94, GK94} could not be applied directly to develop the operadic cohomology theory, minimal model, controlling $L_\infty$-structures, etc., for Nijenhuis associative algebras.
\end{remark}

\medskip

\section{Cohomology theory of Nijenhuis associative algebras} \label{Sect: Cohomology theory of Nijenhuis associative algebras}

In this section, we will introduce a cohomology theory of Nijenhuis associative algebras. We will see later that this cohomology theory controls the deformations of Nijenhuis associative algebras.

\smallskip

\subsection{Hochschild cohomology of associative algebras} \label{Hochschild cohomology of associative algebras}\


Let $M$ be a bimodule over an associative algebra $(A, m)$. Recall that the \textit{Hochschild cochain complex of associative algebra $(A, m)$ with coefficients in bimodule $M$} is the cochain complex
$$\C^\bullet_\Alg(A, M):= \bigoplus_{n=0}^\infty \C^n_\Alg(A, M ),$$
where, for $n \geqslant 0$, $\C^n_\Alg(A, M )=\Hom(A^{\otimes n}, M )$ and its differential 
$$\delta^n_{\Alg, M}: \C^n_\Alg(A, M ) \rightarrow \C^{n+1}_\Alg(A, M )$$
is given by
\begin{align*} 
	\delta_{\Alg, M}^n(f)(a_{1, n+1})= a_1 f(a_{2, n+1})+\sum_{i=1}^n(-1)^{i}f(a_{1, i-1} \ot a_i \cdot a_{i+1}\ot a_{i+2, n+1})
	+ (-1)^{n+1}f(a_{1, n}) a_{n+1}
\end{align*}
for any $f\in \C^n_\Alg(A, M )$ and $a_1, \dots, a_{n+1}\in A$, where for $1 \leqslant i \leqslant j \leqslant n$, $a_{i, j}=a_i\otimes \cdots \otimes a_j$ but for $i>j$, $a_{i, j}$ is by convention empty.

The cohomology of the cochain complex, denoted by $\rmH^\bullet_\Alg(A, M )$, is called the \textit{Hochschild cohomology of associative algebra $(A, m)$ with coefficients in bimodule $M$}.
When $M$ is the regular bimodule $A$ itself, we simply write the differential $\delta^{\bullet}_{\Alg, A}$ as $\delta^{\bullet}_{\Alg}$, and also simply denote $\C^\bullet_\Alg(A, A)$ by $\C^\bullet_\Alg(A)$ and $\rmH^\bullet_\Alg(A, A)$ by $\rmH^\bullet_\Alg(A)$, called the \textit{Hochschild cochain complex of associative algebra $(A, m)$} and the \textit{Hochschild cohomology of associative algebra $(A, m)$}, respectively.


\smallskip

\subsection{Cohomology of Nijenhuis operators} \label{Subsect: cohomology NjA operator} \

Next, let's introduce the cohomology theory of Nijenhuis operators.

Let $A = (A, m, P)$ be a Nijenhuis associative algebra and $M=(M, P_M)$ be a Nijenhuis bimodule over it. According to Propositions \ref{Prop: new Nj algebra} and \ref{Prop: new-bimodule}, one can define a new Nijenhuis associative algebra $A_P=(A, m_P, P) $ and a new Nijenhuis bimodule $M_P=(M, \rhd, \lhd, P_M)$ over $A_P $.
Consider the Hochschild cochain complex of the associative algebra $A_P$ with coefficients in bimodule $M_P$:
$$\C^\bullet_{\mathrm{Alg}}(A_P, {M_P})=\bigoplus\limits_{n=0}^\infty \C^n_{\mathrm{Alg}}(A_P, {M_P}),$$
where, for $n \geqslant 0$, $ \C^n_{\mathrm{Alg}}(A_P, {M_P})=\Hom (A^{\ot n}, M)$ and its differential
$$\partial^n: \C^n_{\mathrm{Alg}}(A_P, {M_P})\rightarrow \C^{n+1}_{\mathrm{Alg}}(A_P, {M_P}) $$
is given by
\begin{align*}
	\partial^n(f)(a_{1, n+1})
	=&\ a_1\rhd f(a_{2, n+1})+\sum_{i=1}^n (-1)^{i} f (a_{1, i-1}\ot a_{i}\cdot_P a_{i+1} \ot a_{i+2, n+1})
	+ (-1)^{n+1} f(a_{1, n})\lhd a_{n+1}\\
	=&\ P(a_1) f(a_{2, n+1})\\
	& +\sum_{i=1}^n(-1)^{i} \big( f(a_{1, i-1}\ot a_{i} P(a_{i+1}) \ot a_{i+2, n+1})+f(a_{1, i-1}\ot P(a_{i})a_{i+1} \ot a_{i+2, n+1})\\
	& \quad \quad - f(a_{1, i-1}\ot P(a_{i} a_{i+1}) \ot a_{i+2, n+1}) \big) \\
	& + (-1)^{n+1} f(a_{1, n}) P(a_{n+1})
\end{align*}
for any $f\in \C^n_{\Alg}(A_P, M_P)$ and $a_1, \dots, a_{n+1}\in A$.

\begin{defn}
	Let $A = (A, m, P)$ be a Nijenhuis associative algebra and $M = (M, P_M)$ be a Nijenhuis bimodule over it. Define the \textit{cochain complex of Nijenhuis operator $P$ with coefficients in Nijenhuis bimodule $M$}, denoted by $(\C_{\NjO}^\bullet(A, M), \delta_{\NjO, M}^\bullet)$, as follows:
	for any $n\geqslant 0 $,
	$$\C_{\NjO}^n(A, M):=\Hom(A^{\ot n}, M)$$
	and its differential
	$$\delta_{\NjO, M}^n: \C^n_{\NjO}(A, M )\rightarrow \C^{n+1}_{\NjO}(A, M) $$
	is defined to be
	\begin{align} \label{Eq: diff of Nijenhuis operator}
		\delta^n_{\NjO, M}(f) := - P_M\circ \delta_{\Alg, M}^{n}(f)+\partial^{n}(f)
	\end{align}
	for any $f\in \C_{\NjO}^n(A, M)$.
		
	The cohomology of $\C_{\NjO}^\bullet(A, M)$, denoted by $\mathrm{H}_{\NjO}^\bullet(A, M)$, is called the\textit{ cohomology of Nijenhuis operator $P$ with coefficients in Nijenhuis bimodule $M$}.	
	When $M$ is the regular Nijenhuis bimodule $A$ itself,
	we simply denote the differential $\delta^{\bullet}_{\NjO, A}$ by $\delta^{\bullet}_{\NjO}$. We also simply denote $\C^\bullet_{\NjO}(A, A)$ by $\C^\bullet_{\NjO}(A)$ and $\rmH^\bullet_{\NjO}(A, A)$ by $\rmH^\bullet_{\NjO}(A)$, called the \textit{cochain complex of Nijenhuis operator $P$} and the \textit{cohomology of Nijenhuis operator $P$}, respectively.
\end{defn}

The above definition is justified by the following lemma, which can be verified through direct inspection. However, we will first deduce the specific case where \( M \) is the regular Nijenhuis bimodule \( A \) itself using a particular \( L_\infty \)-algebra (see Proposition~\ref{Prop: justifying Cohomology theory of Nijenhuis operator Ass} (ii)). Building on this result, the general cases will naturally follow as a direct corollary from the results presented in Subsection~\ref{Sub: cohomology proofs}.

\begin{lem} \label{Lem: cochain complex of Nij operator}
	$(\C_{\NjO}^\bullet(A, M), \delta_{\NjO, M}^\bullet)$ is a cochain complex.
\end{lem}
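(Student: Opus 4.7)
My plan is to reduce the general statement to the case where $M = A$ is the regular Nijenhuis bimodule, which, as the authors announce, will be established via the Maurer-Cartan characterization of Nijenhuis associative algebras in an auxiliary $L_\infty$-algebra (see Proposition~\ref{Prop: justifying Cohomology theory of Nijenhuis operator Ass}(ii)). The reduction is carried out through the Nijenhuis trivial extension $A \ltimes M$ provided by Proposition~\ref{Prop: trivial extension of Nijenhuis bimodule}, whose Nijenhuis operator is $\widetilde{P}(a, x) = (P(a), P_M(x))$.

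The idea is to build an injection of graded vector spaces
$$\iota: \C^\bullet_{\NjO}(A, M) \hookrightarrow \C^\bullet_{\NjO}(A \ltimes M), \qquad f \longmapsto \iota_M \circ f \circ \pi^{\ot n},$$
where $\pi: A \ltimes M \twoheadrightarrow A$ is the canonical projection and $\iota_M: M \hookrightarrow A \ltimes M$ is the inclusion of the second summand, so that $\iota(f)$ outputs a pure $M$-element depending only on the $A$-components of its inputs. Once one verifies that $\iota$ intertwines differentials, that is $\iota \circ \delta^n_{\NjO, M} = \delta^n_{\NjO, A \ltimes M} \circ \iota$, the identity $\delta^{n+1}_{\NjO, M} \circ \delta^n_{\NjO, M} = 0$ follows at once from the regular case applied to $A \ltimes M$ together with the injectivity of $\iota$.

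To verify the intertwining, I split $\delta^n_{\NjO, M}$ along the two summands in \eqref{Eq: diff of Nijenhuis operator} and treat each separately. For the Hochschild part $\delta_{\Alg, M}$, this is the standard restriction property under a trivial extension: since $(a, x) \cdot (0, y) = (0, ay)$ and $(0, y) \cdot (a, x) = (0, ya)$ in $A \ltimes M$, while the $A$-component of $(a, x)(b, y)$ is $ab$, the formula for $\delta_{\Alg, A \ltimes M}(\iota(f))$ restricts term-by-term to $\iota(\delta_{\Alg, M}(f))$. For the $\partial$ summand, the key identity is $\widetilde{P}(a, 0) = (P(a), 0)$, whence $(a, 0) \cdot_{\widetilde{P}} (b, 0) = (a \cdot_P b, 0)$; moreover, the twisted left and right actions of $A \ltimes M$ on itself (regarded as its own regular Nijenhuis bimodule via Proposition~\ref{Prop: new-bimodule}) evaluated against an output in the $M$-summand become $P(a)\,f(\ldots)$ and $f(\ldots)\,P(a)$, which are exactly $a \rhd f(\ldots)$ and $f(\ldots) \lhd a$. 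Finally, composing with $-\widetilde{P}$ on the output restricts to $-P_M$ on $M$-valued cochains, so both summands of $\delta^n_{\NjO, A \ltimes M}(\iota(f))$ align with those of $\iota(\delta^n_{\NjO, M}(f))$.

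The main obstacle is pure bookkeeping: tracking $A$- versus $M$-components in every term of $\delta^n_{\NjO, A \ltimes M}(\iota(f))$ and confirming that the terms falling outside the image of $\iota$ either vanish or combine correctly. The Nijenhuis bimodule axioms \eqref{eq:left action} and \eqref{eq:right action} enter only implicitly, through Proposition~\ref{Prop: trivial extension of Nijenhuis bimodule}; no further use of them is needed for the intertwining itself.
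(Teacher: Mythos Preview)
Your proposal is correct and follows essentially the same approach as the paper: reduce the general coefficient case to the regular one via the trivial extension $A\ltimes M$, by embedding $\C^\bullet_{\NjO}(A,M)$ into $\C^\bullet_{\NjO}(A\ltimes M)$ and checking that the differentials intertwine. The only cosmetic difference is that the paper carries out the embedding at the level of the full mapping-cone complex $\C^\bullet_{\NjA}$ (so that Lemma~\ref{Lem: cochain complex of Nij operator} and Proposition~\ref{Prop: Chain map Phi} drop out simultaneously), whereas you work with $\C^\bullet_{\NjO}$ directly; the underlying computation is the same.
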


\begin{remark} The differential $\delta_{\NjO, M}^\bullet$ on the cochain complex of Nijenhuis operator $P$ with coefficients in Nijenhuis bimodule $M$ uses not only the Hochschild differential $\partial$ of the new associative algebra $A_P $ with coefficients in the new bimodule $M_P$, but also an extra term $-P_M\circ \delta_{\Alg, M}$. This distinctive feature, which has yet to be fully understood, is quite different from the known cases of other operated algebras.
In fact, the cohomology of a Rota-Baxter operator is defined to be the Hochschild cohomology of the new associative algebra with coefficients in the new bimodule (see \cite[Definition 6.1]{WZ24}) and the cohomology of a differential operator is defined to be the Hochschild cohomology of the new associative algebra with coefficients in the new bimodule (see \cite[2.8 Definition]{GLSZ22} or \cite[Section 2.1]{CGWZ24}).
\end{remark}

\smallskip

\subsection{Cohomology of Nijenhuis associative algebras} \label{Subsec: cohomology NjA} \

Now, we define a cohomology theory of Nijenhuis associative algebras by combining the Hochschild cohomology of associative algebras and the cohomology of Nijenhuis operators.

Let $M=(M, P_M)$ be a Nijenhuis bimodule over a Nijenhuis associative algebra $A = (A, m, P)$. Let's construct a chain map
$$\Phi_{M}^\bullet:\C^\bullet_{\Alg}(A, M) \rightarrow \C_{\NjO}^\bullet(A, M), $$
i.e., the following commutative diagram:
$$\xymatrix{
	\C^0_{\Alg}(A, M)\ar[r]^-{\delta_{\Alg, M}^0}\ar[d]^-{\Phi_{M}^0}& \C^1_{\Alg}(A, M)\ar[r]\ar[d]^-{\Phi_{M}^1} &\cdots \ar[r] &\C^n_{\Alg}(A, M)\ar[r]^-{\delta_{\Alg, M}^n}\ar[d]^-{\Phi_{M}^n}&\C^{n+1}_{\Alg}(A, M)\ar[d]^{\Phi_{M}^{n+1}}\ar[r] & \cdots\\
	\C^0_{\NjO}(A, M)\ar[r]^-{\delta^0_{\NjO, M}}&\C^1_{\NjO}(A, M)\ar[r] &\cdots \ar[r]& \C^n_{\NjO}(A, M)\ar[r]^-{\delta^n_{\NjO, M}}&\C^{n+1}_{\NjO}(A, M)\ar[r]&\cdots
	,}$$
where $\Phi_{M}^0 := \Id_{M}$, and for $n\geqslant 1$, $ f\in \C^n_{\Alg}(A, M)$, $\Phi_{M}^n(f)\in \C^n_{\NjO}(A, M)$ is defined as:
\begin{align*}
	\Phi_{M}^n(f)&(a_1, \dots, a_n)\\
	& := \sum_{k=0}^n \sum_{1\leqslant i_1<i_2<\dots<i_k\leqslant n} (-1)^{n-k}
	P_M^{n-k} \circ f(a_{1, i_1-1}, P(a_{i_1}), a_{i_1+1, i_2-1}, P(a_{i_2}), \dots, P(a_{i_k}), a_{i_k+1, n}).
\end{align*}
In particular, when $M$ is the regular Nijenhuis bimodule $A$ itself, we simply denote $\Phi^\bullet_{M}$ by $\Phi^\bullet$.

\begin{prop} \label{Prop: Chain map Phi}
	The map $\Phi_{M}^\bullet: \C^\bullet_\Alg(A, M)\rightarrow \C^\bullet_{\NjO}(A, M)$ is a chain map.
\end{prop}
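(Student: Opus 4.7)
The plan is to verify the chain-map identity $\Phi_M^{n+1} \circ \delta_{\Alg,M}^n = \delta_{\NjO,M}^n \circ \Phi_M^n$ for each $n \geqslant 0$ by direct expansion of both sides, exploiting the splitting $\delta_{\NjO,M}^n = -P_M \circ \delta_{\Alg,M}^n + \partial^n$ from \eqref{Eq: diff of Nijenhuis operator}. Throughout the computation, I would index the summands of $\Phi_M^n(f)(a_1,\ldots,a_n)$ by subsets $S \subseteq \{1,\ldots,n\}$ recording the positions at which $P$ acts, with the complementary positions contributing $-P_M$ factors on the outside.

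First I would expand $\Phi_M^{n+1}(\delta_{\Alg,M}^n(f))(a_1,\ldots,a_{n+1})$ by distributing $\Phi_M^{n+1}$ (indexed by $S \subseteq \{1,\ldots,n+1\}$) over the three groups of terms produced by $\delta_{\Alg,M}^n$: the left outer-action $a_1 \cdot f(a_{2,n+1})$, the $n$ interior terms $(-1)^i f(\cdots a_i a_{i+1}\cdots)$, and the right outer-action $(-1)^{n+1} f(a_{1,n}) \cdot a_{n+1}$. Then I would expand $\partial^n(\Phi_M^n(f))$ by inserting $a_i \cdot_P a_{i+1} = P(a_i)a_{i+1}+a_iP(a_{i+1})-P(a_ia_{i+1})$ on the interior and $P(a_1)\rhd$ resp.\ $\lhd P(a_{n+1})$ on the boundary, and expand $-P_M \circ \delta_{\Alg,M}^n(\Phi_M^n(f))$, both parameterized by $T \subseteq \{1,\ldots,n\}$. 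The two expansions are then matched after reindexing, where an interior subset $S \subseteq \{1,\ldots,n+1\}$ on the left corresponds to the subset $T \subseteq \{1,\ldots,n\}$ on the right obtained by collapsing the adjacent pair $(i,i+1)$.

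The bulk of the verification consists of matching the interior contributions for each $1 \leqslant i \leqslant n$ according to the behaviour of $S$ at positions $i$ and $i+1$: when neither belongs to $S$, the term pairs off with the corresponding interior term of $-P_M\circ\delta_{\Alg,M}^n$; the mixed cases $\{i\in S,\ i+1\notin S\}$ and its mirror are absorbed by the $P(a_i)a_{i+1}$ and $a_iP(a_{i+1})$ summands of $\partial^n$; and the case $\{i\in S,\ i+1\in S\}$, together with the $-P(a_ia_{i+1})$ summand of $\partial^n$, is reconciled via the operadic Nijenhuis relation \eqref{Eq: Nijenhuis relation in terms of maps}. The outer-action terms at positions $1$ and $n+1$ are matched by the same scheme upon invoking the left and right Nijenhuis bimodule axioms \eqref{eq:left action} and \eqref{eq:right action}, respectively, which produce exactly the ``extra'' $P_M$-twisted contributions needed to glue the plain $a_1 \cdot$ (resp.\ $\cdot a_{n+1}$) terms to the $P(a_1)\rhd$ (resp.\ $\lhd P(a_{n+1})$) terms of $\partial^n$.

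The main obstacle is the combinatorial bookkeeping: the sign $(-1)^i$ from $\delta_{\Alg,M}$, the sign $(-1)^{n-|T|}$ (resp.\ $(-1)^{n+1-|S|}$) together with the $P_M$-multiplicity from $\Phi_M^\bullet$, and the sign shift introduced by the extra $-P_M$ in $\delta_{\NjO,M}^n$, must all be tracked coherently under the reindexing above. Once this skeleton is set up, the Nijenhuis relations are invoked exactly where predicted by the structure of the expansion, and the identity falls out. A conceptually cleaner alternative would be to first prove the regular case ($M=A$) and recover the general statement by applying it to the semi-direct product Nijenhuis algebra $A \ltimes M$ of Proposition~\ref{Prop: trivial extension of Nijenhuis bimodule}, extracting $\C^\bullet(A,M)$ and $\Phi_M^\bullet$ as summands; the regular case may in turn later be obtained as a byproduct of the $L_\infty$-structure of Section~\ref{Section: From minimal model to Linifnity algebras NjA}, in which $\Phi^\bullet$ should arise from a compatibility of Maurer--Cartan twists.
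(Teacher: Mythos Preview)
Your direct-verification plan is sound and is precisely the computation the paper alludes to as ``direct inspection'' but does not carry out. The paper instead takes the alternative route you sketch in your final paragraph: it deduces the regular case $M=A$ from the $L_\infty$-algebra structure on $\frakC_{\NjA}(A)$ (Proposition~\ref{Prop: cohomology complex as the underlying complex of L-infinity algebra NjA}), where $\Phi^\bullet$ emerges as the off-diagonal component of the twisted differential $l_1^\alpha$, and then obtains the general coefficient case by applying the regular statement to the semi-direct product $A\ltimes M$ and extracting the summand $\C^\bullet(A,M)$ (Subsection~\ref{Sub: cohomology proofs}). Your direct route is self-contained and makes the use of the Nijenhuis axioms explicit at each step; the paper's route is logically later but explains conceptually \emph{why} $\Phi_M^\bullet$ has the shape it does.

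One small refinement to your case analysis: the LHS term with $\{i,i+1\}\subseteq S$ matches \emph{all three} summands of $a_i\cdot_P a_{i+1}$ in $\partial^n$ when $i\in T$ (so that an extra $P$ is applied), and the Nijenhuis relation on $A$ collapses $P\big(P(a_i)a_{i+1}+a_iP(a_{i+1})-P(a_ia_{i+1})\big)$ to $P(a_i)P(a_{i+1})$. Separately, the summand $-P(a_ia_{i+1})$ of $\partial^n$ with $i\notin T$ cancels against the interior term of $-P_M\circ\delta_{\Alg,M}^n$ with $i\in T$. With this adjustment the bookkeeping closes exactly as you describe.
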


The proof of Proposition~\ref{Prop: Chain map Phi} can be done by direct inspection, which is not difficult. But we will deduce Proposition~\ref{Prop: Chain map Phi} in the particular case where $M$ is the regular Nijenhuis bimodule $A$ itself, from a specific $L_\infty$-algebra, see Proposition~\ref{Prop: cohomology complex as the underlying complex of L-infinity algebra NjA}. On this basis, the general cases will be a direct corollary of the results in Subsection~\ref{Sub: cohomology proofs}.

\begin{defn} \label{Def: definition of Nijenhuis cohomology}
	Let $M=(M, P_M)$ be a Nijenhuis bimodule over a Nijenhuis associative algebra $A=(A, m, P)$. We define the \textit{cochain complex of Nijenhuis associative algebra $A$ with coefficients in Nijenhuis bimodule $M$}, denoted by $(\C^\bullet_{\NjA}(A, M), \delta_{\NjA, M}^\bullet)$, to be the mapping cone of $\Phi_{M}^\bullet$ shifted by $-1$, that is,
	$$\C^0_{\NjA}(A, M)=\C^0_\Alg(A, M) \quad \mathrm{and} \quad \C^n_{\NjA}(A, M)=\C^n_\Alg(A, M)\oplus \C^{n-1}_{\NjO}(A, M), \text{ for } n\geqslant 1, $$
	and its differential $\delta_{\NjA, M}^n: \C^n_{\NjA}(A, M)\rightarrow \C^{n+1}_{\NjA}(A, M)$ is given by
	\begin{align*}
		\delta_{\NjA, M}^n(f, g)
		=&\ (\delta_{\Alg, M}^n(f), -\Phi_{M}^n(f)-\delta_{\NjO, M}^{n-1}(g))
	\end{align*}	
	for any $f\in \C^n_\Alg(A, M)$ and $g\in \C^{n-1}_{\NjO}(A, M)$.

	The cohomology of $\C^\bullet_{\NjA}(A, M)$, denoted by $\rmH_{\NjA}^\bullet(A, M)$, is called the \textit{cohomology of Nijenhuis associative algebra $A$ with coefficients in Nijenhuis bimodule $M$}.
	When $M$ is the regular Nijenhuis bimodule $A$ itself,
	we simply denote the differential $\delta^{\bullet}_{\NjA, A}$ by $\delta^{\bullet}_{\NjA}$. We also simply denote $\C^\bullet_{\NjA}(A, A)$ by $\C^\bullet_{\NjA}(A)$ and $\rmH^\bullet_{\NjA}(A, A)$ by $\rmH^\bullet_{\NjA}(A)$, called the \textit{cochain complex of Nijenhuis associative algebra $A$} and the \textit{cohomology of Nijenhuis associative algebra $A$}, respectively.
\end{defn}

By properties of the mapping cone, there is a short exact sequence of cochain complexes:
\begin{eqnarray*} 
	0 \to s^{-1}\C^\bullet_{\NjO}(A, M) \to \C^\bullet_{\NjA}(A, M) \to \C^\bullet_{\Alg}(A, M)\to 0,
\end{eqnarray*}
which induces a long exact sequence of cohomology groups
$$0 \to \rmH^{0}_{\NjA}(A, M) \to \mathrm{H}_{\Alg}^0(A, M) \to \rmH^0_{\NjO}(A, M) \to \rmH^{1}_{\NjA}(A, M) \to \mathrm{H}_{\Alg}^1(A, M) \to \cdots$$
$$\cdots \to \mathrm{H}_{\Alg}^p(A, M) \to \rmH^p_{\NjO}(A, M) \to \rmH^{p+1}_{\NjA}(A, M) \to \mathrm{H}_{\Alg}^{p+1}(A, M) \to \cdots$$
Here the symbol $s^{-1}$ represents the desuspension of a cohomologically graded vector space. For a homologically graded space $V$, the suspension of $V$ is the graded space $sV $ with $(sV)_n=V_{n-1}$ for all $n\in \bbZ$. Write $sv\in (sV)_n$ for $v\in V_{n-1}$. The map $s: V \rightarrow sV$, $v\mapsto sv$ is a graded map of degree $1$.
 Similarly, the desuspension of $V$ , denoted $s^{-1}V$, is the graded space with $(s^{-1}V)_n=V_{n+1}$. Write $s^{-1}v \in (s^{-1}V)_n$ for $v \in V_{n+1}$ and the map $s^{-1}: V \rightarrow s^{-1}V$, $v\mapsto s^{-1}v$ is a graded map of degree $-1$. In the context of cohomologically graded spaces, as used above, the suspension $s$ and desuspension $s^{-1}$ have degree $-1$ and $1$ respectively.

\medskip

\subsection{Coefficients from regular Nijenhuis bimodules to general Nijenhuis bimodules}\ \label{Sub: cohomology proofs}

Let $M=(M, P_M)$ be a Nijenhuis bimodule over a Nijenhuis associative algebra $A=(A, m, P)$.
According to Proposition~\ref{Prop: trivial extension of Nijenhuis bimodule}, the semi-direct product $A \ltimes M$ of Nijenhuis associative algebra $A$ by its Nijenhuis bimodule $M$ is also a Nijenhuis associative algebra. Hence, we get the cochain complex $(\C^{\bullet}_{\NjA}(A \ltimes M), \delta^{\bullet}_{\NjA})$ of the Nijenhuis associative algebra $A \ltimes M$, where
\begin{align*}
	\C^0_{\NjA}(A \ltimes M) &= \C^0_\Alg(A \ltimes M) = A \oplus M,\\
	\C^n_{\NjA}(A \ltimes M) &= \C^n_\Alg(A \ltimes M) \oplus \C^{n-1}_{\NjO}(A \ltimes M) \\
	&= \Hom((A \oplus M)^{n}, A \oplus M) \oplus \Hom((A \oplus M)^{n-1}, A \oplus M), \text{ for } n\geqslant 1.
\end{align*}

In order to obtain the cochain complex of Nijenhuis associative algebra $A$ with coefficients in Nijenhuis bimodules $M$, one also needs to consider the obvious embedding
$$\iota: \C^{\bullet}_{\NjA}(A, M) \hookrightarrow \C^{\bullet}_{\NjA}(A \ltimes M), \ f \mapsto \overline{f}.$$

\begin{prop}
	With above notations, the image $\mathrm{Im}(\iota)$ is a subcomplex of $(\C^{\bullet}_{\NjA}(A \ltimes M), \delta^{\bullet}_{\NjA})$.
\end{prop}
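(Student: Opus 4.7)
The plan is to describe $\mathrm{Im}(\iota)$ intrinsically via two stability properties, and then verify that these are preserved termwise by $\delta^\bullet_{\NjA}$. Concretely, the embedding $\iota$ sends a linear map $h:A^{\otimes k}\to M$ to the extension $\overline{h}:(A\oplus M)^{\otimes k}\to A\oplus M$ defined by $\overline{h}((a_1,x_1),\dots,(a_k,x_k)):=(0,h(a_1,\dots,a_k))$. So $\overline{h}$ is characterized by two properties: (P1) it is \emph{$M$-valued}, i.e.\ its image lies in $\{0\}\oplus M$; and (P2) it is \emph{$A$-factored}, i.e.\ it depends only on the $A$-components of its inputs. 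Accordingly, $\mathrm{Im}(\iota)$ is precisely the set of pairs $(\overline{f},\overline{g})\in \C^n_{\Alg}(A\ltimes M)\oplus \C^{n-1}_{\NjO}(A\ltimes M)$ whose two components both satisfy (P1) and (P2).

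Before checking the differential, I would record three auxiliary closure properties of the semi-direct product $A\ltimes M$, all immediate from Proposition~\ref{Prop: trivial extension of Nijenhuis bimodule}: (i) for any $(a,x)\in A\ltimes M$ and $m\in M$, both $(a,x)\cdot_{\ltimes}(0,m)=(0,am)$ and $(0,m)\cdot_{\ltimes}(a,x)=(0,ma)$ lie in $\{0\}\oplus M$ and depend only on the $A$-component $a$; (ii) the Nijenhuis operator $P_{A\ltimes M}(a,x)=(P(a),P_M(x))$ preserves $\{0\}\oplus M$ and intertwines the projection onto $A$ with $P$; (iii) consequently, the derived product $(a,x)\cdot_P(b,y)$ on $A\ltimes M$ has $A$-component equal to $a\cdot_P b$, depending only on $a$ and $b$.

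With these facts in hand, I would inspect each summand of $\delta^n_{\NjA}(\overline{f},\overline{g})=(\delta^n_{\Alg}(\overline{f}),\,-\Phi^n(\overline{f})-\delta^{n-1}_{\NjO}(\overline{g}))$ in turn. Every summand of $\delta^n_{\Alg}(\overline{f})$ is either a $\cdot_{\ltimes}$-product of $\overline{f}(\dots)\in\{0\}\oplus M$ with an element of $A\ltimes M$, handled by (i) and (P1), or an evaluation of $\overline{f}$ with two adjacent inputs replaced by their $\cdot_{\ltimes}$-product, whose $A$-component is $a_ia_{i+1}$ by (i) and so is absorbed by (P2). The operator $\Phi^n(\overline{f})$ is built from iterated compositions of $P_{A\ltimes M}$ on the outside with $\overline{f}$ evaluated on mixtures of $(a_i,x_i)$ and $P_{A\ltimes M}(a_i,x_i)$, so (ii) and (P2) guarantee that (P1) and (P2) are preserved. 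For $\delta^{n-1}_{\NjO}(\overline{g})$, the Hochschild-type piece $\partial^{n-1}(\overline{g})$ uses $\cdot_P$ both inside and around $\overline{g}$ and thus preserves both properties by (iii) and (i)--(ii); the extra summand $-P_{A\ltimes M}\circ\delta^{n-1}_{\Alg}(\overline{g})$ preserves them by composing the $\delta_{\Alg}$-analysis just done with (ii).

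These inspections show $\delta^n_{\NjA}(\overline{f},\overline{g})\in\mathrm{Im}(\iota)$, so $\mathrm{Im}(\iota)$ is a subcomplex. Moreover, tracking the formulas through yields the pre-images $\delta^n_{\Alg,M}(f)$ and $-\Phi^n_M(f)-\delta^{n-1}_{\NjO,M}(g)$ under $\iota$, which will simultaneously confirm that Definition~\ref{Def: definition of Nijenhuis cohomology} gives a well-defined cochain complex and will supply the general-coefficient case of Lemma~\ref{Lem: cochain complex of Nij operator} and Proposition~\ref{Prop: Chain map Phi}. The only real obstacle is bookkeeping: once (i)--(iii) are on the page, the verification is a purely termwise check with no conceptual difficulty.
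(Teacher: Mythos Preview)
Your proposal is correct and follows essentially the same approach as the paper: both verify that $\delta^n_{\NjA}(\overline f,\overline g)=\iota(\delta^n_{\NjA,M}(f,g))$ by inspecting each piece of the differential. The paper simply writes the chain of equalities and labels it ``direct calculation,'' while you supply the intrinsic characterization (P1)--(P2) and the closure facts (i)--(iii) that make that calculation transparent; your version is the fleshed-out form of theirs.
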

\begin{proof}
	Let $(f,g) \in \C^n_{\NjA}(A, M)$ for $n \geqslant 0$. Direct calculation can obtain:
	\begin{align*}
		\delta_{\NjA}^n(\iota(f,g))
		=&\ \delta_{\NjA}^n(\overline{f}, \overline{g})\\
		=&\ (\delta_{\Alg}^n(\overline{f}), -\Phi^n(\overline{f})-\delta_{\NjO}^{n-1}(\overline{g})) \\
		=&\ \iota(\delta_{\Alg, M}^n(f), -\Phi_{M}^n(f)-\delta_{\NjO, M}^{n-1}(g))\\
		=&\ \iota(\delta_{\NjA, M}^n(f,g)).
	\end{align*}
\end{proof}

Let $p: \mathrm{Im}(\iota) \twoheadrightarrow \C^{\bullet}_{\NjA}(A, M), \overline{f} \mapsto f$, be the obvious projection. Using the fact $\iota \circ p = \id_{\mathrm{Im}(\iota)}$, we find that $\delta^{\bullet}_{\NjA, M} = p \circ \delta^{\bullet}_{\NjA} \circ \iota$ is indeed a differential on the space $\C^{\bullet}_{\NjA}(A, M)$, which justify the Definition~\ref{Def: definition of Nijenhuis cohomology}. Moreover, Lemma~\ref{Lem: cochain complex of Nij operator} and Proposition~\ref{Prop: Chain map Phi} are direct corollaries as well.

\medskip

\section{The Koszul dual homotopy cooperad of the operad for Nijenhuis associative algebras} \label{Section: Koszul dual cooperad NjA}

In this section, we construct a homotopy cooperad $\NjAoperad^\ac$, which serves as the Koszul dual of $\NjAoperad$, the operad for Nijenhuis associative algebras. This is justified by the fact that the cobar construction of $\NjAoperad^\ac$ is exactly $\NjAoperad_\infty$, the minimal model of $\NjAoperad$, as will be proved in Section~\ref{Section: minimal model NjA}.

Define a graded collection $\mathscr{S}(\NjAoperad^\ac)$ by
$$\mathscr{S}(\NjAoperad^\ac)(n)=\bfk u_n\oplus \bfk v_n$$
with $|u_n|=0, |v_n|=1$ for $n\geqslant 1$. Now, we put a coaugmented homotopy cooperad structure on $\mathscr{S}(\NjAoperad^\ac)$. Firstly, consider trees of arity $n\geqslant 1$ in the following list:
\begin{enumerate}

\item Trees of type (I) which is of weight $2$: for each $1\leqslant j\leqslant n$ and $1\leqslant i\leqslant n-j+1$, there exists such a tree which can be visualized as
\begin{eqnarray*}
\begin{tikzpicture}[scale=1, descr/.style={fill=white}]
\tikzstyle{every node}=[thick, minimum size=3pt, inner sep=1pt]
\node(v0) at (0, 0)[fill=black, circle, label=right:{\tiny $ n-j+1$}]{};
\node(v1-1) at (-1.5, 1){};
\node(v1-2) at(0, 1)[fill=black, circle, label=right:{\tiny $\tiny j$}]{};
\node(v1-3) at(1.5, 1){};
\node(v2-1)at (-1, 1.7){};
\node(v2-2) at(1, 1.7){};
\draw(v0)--(v1-1);
\draw(v0)--(v1-3);
\draw(v1-2)--(v2-1);
\draw(v1-2)--(v2-2);
\draw[dotted](-0.4, 1.5)--(0.4, 1.5);
\draw[dotted](-0.5, 0.5)--(-0.1, 0.5);
\draw[dotted](0.1, 0.5)--(0.5, 0.5);
\path[-, font=\scriptsize]
(v0) edge node[descr]{{\tiny$i$}} (v1-2);

\node(vroot) at (0, -0.4){};
\draw(v0)--(vroot);
\end{tikzpicture}
\end{eqnarray*}

\item Trees of type (II) which is of weight $p+1\geqslant 3$ and height $ t+2 $ with $0 \leqslant t \leqslant p \leqslant n$, and there exists a unique vertex in the first $ t+1 $ levels: in these trees, there are numbers $1\leqslant k_t<\cdots<k_{p-1}\leqslant p$, $r_1, \dots, r_p\geqslant 1$ and $1\leqslant i_{h} \leqslant r_{h} $ for $1\leqslant h \leqslant t $, such that
the $ q $-th vertex has arity $ r_{q} $ for all $1 \leqslant q \leqslant t $, the $ m $-th vertex is connected to the $ i_{m-1} $-th leaf of the $ (m-1) $-th vertex for all $ 2 \leqslant m \leqslant t $, the $ (t+1) $-th vertex has arity $ p $ and is connected to the $ i_{t} $-th leaf of the $ t $-th vertex, and the other vertices connected to the $k_t$-th (resp. $k_{t+1}$-th, $\dots, k_{p-1}$-th) leaf of the $ (t+1) $-th vertex and with arity $r_{t+1}$ (resp. $r_{t+2}, \dots, r_p$), so $r_1+\cdots+r_p=n \geqslant 2$. These trees can be visualized as:

\begin{eqnarray*}
	\begin{tikzpicture}[scale=1.2, descr/.style={fill=white}]
	\tikzstyle{every node}=[thick, minimum size=3pt, inner sep=1pt]
	\begin{scope}[shift={(0,0.8)}, scale=1]
		\node(v-1) at (0, -2.2)[circle, fill=black, label=right:{\tiny $r_1$}]{};
		\node(v0-1) at (-2, -1){ };
		\node(v0-3) at(2, -1){ };	

		\node(vroot) at (0, -2.6){};
		\draw(v-1)--(vroot);
					
		\node(v-1-2) at (0, -1.6)[circle, fill=black, label=right:{\tiny $r_2$}]{};
		\node(v0-1-2) at (-2, -0.4){ };
		\node(v0-3-2) at(2, -0.4){ };
		\node(v0-2-1-2) at(0, -0.9)[]{};
		\node(v0-2-2) at(0, -0.6)[]{};
		\draw(v-1-2)--(v0-1-2);
		\draw(v-1-2)--(v0-2-1-2);
		\draw(v-1-2)--(v0-3-2);
		\draw(v-1)--(v-1-2);
		\draw[dotted](v0-2-1-2)--(v0-2-2);
		\path[-, font=\scriptsize]

		(v-1) edge node[descr]{{\tiny$i_{1}$}} (v-1-2);
		\path[-, font=\scriptsize]
		(v-1-2) edge node[descr]{{\tiny$i_{2}$}} (v0-2-1-2);

		\draw[dotted](-0.5, -1.8)--(-0.1, -1.8);
		\draw[dotted](0.1, -1.8)--(0.5, -1.8);

		\draw[dotted](-0.5, -1.2)--(-0.1, -1.2);
		\draw[dotted](0.1, -1.2)--(0.5, -1.2);
	\end{scope}
		\node(v0) at (0, 0.5)[circle, fill=black, label=right:{\tiny $r_t$}]{};
		\node(v1-1) at (-2, 1.8){ };
		\node(v1-2) at(0, 1.2)[circle, fill=black, label=right:{\tiny $p$}]{};
		\draw[dotted](-0.6, 1)--(-0.1, 1);
		\draw[dotted](0.1, 1)--(0.6, 1);

		\node(v1-3) at(2, 1.8){ };
		\node(v2-1) at(-1.9, 2.6){ };
		\node(v2-2) at (-0.9, 2.8)[circle, fill=black, label=right:{\tiny $r_{t+1}$}]{};
		\node(v2-3) at (0, 2.9){};
		\node(v2-4) at(0.9, 2.8)[circle, fill=black, label=right:{\tiny $r_{p}$}]{};
		\node(v2-5) at(1.9, 2.6){ };
		\node(v3-1) at (-1.6, 3.4){ };
		\node(v3-2) at (-0.4, 3.4){ };
		\node(v3-3) at (0.4, 3.4){ };
		\node(v3-4) at(1.6, 3.4){ };

		\draw(v0-2-2)--(v0);
		\draw(v0)--(v1-1);
		\draw(v0)--(v1-3);
		\draw(v-1)--(v0-1);
		\draw(v-1)--(v0-3);
		
		\path[-, font=\scriptsize]
		(v0) edge node[descr]{{\tiny$i_{t}$}} (v1-2);
		
		\draw(v1-2)--(v2-1);
		\draw(v1-2)--(v2-3);
		\draw(v1-2)--(v2-5);
		
		\path[-, font=\scriptsize]
		(v1-2) 	edge node[descr]{{\tiny$k_t$}} (v2-2)
		edge node[descr]{{\tiny$k_{p-1}$}} (v2-4);
		\draw(v2-2)--(v3-1);
		\draw(v2-2)--(v3-2);
		\draw(v2-4)--(v3-3);
		\draw(v2-4)--(v3-4);

		\draw[dotted](-0.5, 2.4)--(-0.1, 2.4);
		\draw[dotted](0.1, 2.4)--(0.5, 2.4);
		\draw[dotted](-1.4, 2.4)--(-0.8, 2.4);
		\draw[dotted](1.4, 2.4)--(0.8, 2.4);
		\draw[dotted](-1.2, 3.2)--(-0.7, 3.2);
		\draw[dotted](1.2, 3.2)--(0.7, 3.2);
	\end{tikzpicture}
\end{eqnarray*}
\end{enumerate}

Now, we define a family of operations $\{\Delta_T: \mathscr{S}(\NjAoperad^\ac)\rightarrow \mathscr{S}({\NjAoperad^\ac})^{\ot T}\}_{T\in \frakT}$ as follows:
\begin{enumerate}
	\item For a tree $T$ of type $\mathrm{(I)}$ and $n \geqslant 1$,
	 define $\Delta_T(u_n)=u_{n-j+1}\ot u_j$, which can be drawn as
	\begin{eqnarray*}
	\begin{tikzpicture}[scale=1, descr/.style={fill=white}]
	\tikzstyle{every node}=[thick, minimum size=5pt, inner sep=1pt]
	\node(v-2) at(-2, 1)[minimum size=0pt, label=left:{$\Delta_T(u_n)=$}]{};
	\node(sigma-1) at (1.8, 0.9){. };
	\node(v0) at (0, 0)[draw, rectangle]{{\small $u_{n-j+1}$}};
	\node(v1-1) at (-1.5, 1){};
	\node(v1-2) at(0, 1)[draw, rectangle]{\small$u_j$};
	\node(v1-3) at(1.5, 1){};
	\node(v2-1)at (-1, 1.7){};
	\node(v2-2) at(1, 1.7){};
	\draw(v0)--(v1-1);
	\draw(v0)--(v1-3);
	\draw(v1-2)--(v2-1);
	\draw(v1-2)--(v2-2);
	\draw[dotted](-0.4, 1.5)--(0.4, 1.5);
	\draw[dotted](-0.5, 0.5)--(-0.1, 0.5);
	\draw[dotted](0.1, 0.5)--(0.5, 0.5);
	\path[-, font=\scriptsize]
	(v0) edge node[descr]{{\tiny$i$}} (v1-2);

	\node(vroot) at (0, -0.5){};
	\draw(v0)--(vroot);
	\end{tikzpicture}
	\end{eqnarray*}
	Define
	$$\Delta_T(v_n)=
	\left\{\begin{array}{ll} v_n\ot u_1, & j=1, \\
	0, & 2\leqslant j\leqslant n-1, \\
	u_1\ot v_n, & j=n,
	\end{array}\right.$$
	which can be pictured as
	\begin{enumerate}
		\item when $j=1$,
		$$\begin{tikzpicture}[scale=1, descr/.style={fill=white}]
			\tikzstyle{every node}=[thick, minimum size=5pt, inner sep=1pt]
			\node(v-1) at(-2, 0.5)[minimum size=0pt, label=left:{ $\Delta_T(v_n)=$}]{};
			\node(dd) at(1.5, 0.5){;};
			\node(v0) at (0, 0)[draw, rectangle]{\small$v_n$};
			\node(v1-1) at (-1.3, 1){};
			\node(v1-2) at(0, 1)[draw, rectangle]{\small$u_1$};
			\node(v1-3) at(1.3, 1){};
			\node(v2-1)at (0, 1.7){};
			\draw(v0)--(v1-1);
			\draw(v0)--(v1-3);
			\draw(v1-2)--(v2-1);
			\draw[dotted](-0.5, 0.5)--(-0.1, 0.5);
			\draw[dotted](0.1, 0.5)--(0.5, 0.5);
			\path[-, font=\scriptsize]
			(v0) edge node[descr]{{\tiny$i$}} (v1-2);
			\node(vroot) at (0, -0.5){};
			\draw(v0)--(vroot);
		\end{tikzpicture}$$
		\item when $2\leqslant j\leqslant n-1$, $\Delta_T(v_n)=0$;
		\item when $j=n$,
		$$\begin{tikzpicture}[scale=1, descr/.style={fill=white}]
			\tikzstyle{every node}=[thick, minimum size=5pt, inner sep=1pt]
			\node(va) at(2, 0.7)[minimum size=0pt, label=left:{ $\Delta_T(v_n)=$}]{};
			\node(dd) at(4.7, 0.6){.};
			\node(ve) at (3.5, 0)[draw, rectangle]{\small $u_1$};
			\begin{scope}[shift={(0,-0.3)}, scale=1]
				\node(ve1) at (3.5, 1)[draw, rectangle]{\small $v_n$};
				\node(ve2-1) at(2.5, 1.8){};
				\node(ve2-2) at(4.5, 1.8){};
				\draw[dotted](3.1, 1.5)--(3.9, 1.5);
			\end{scope}
			\draw(ve)--(ve1);
			\draw(ve1)--(ve2-1);
			\draw(ve1)--(ve2-2);

			\node(vroot) at (3.5, -0.5){};
			\draw(ve)--(vroot);
		\end{tikzpicture}$$
	\end{enumerate}

	\item For a tree $T$ of type $\mathrm{(II)}$ and $n \geqslant 2$, define
	
	\begin{eqnarray*}
		\begin{tikzpicture}[scale=1.2, descr/.style={fill=white}]
			\tikzstyle{every node}=[thick, minimum size=3pt, inner sep=1pt]
	
				\node(v-0-1) at(-4.6, 1.3)[minimum size=0pt, label=right:{$\Delta_T(v_n)=(-1)^\frac{p(p-1)}{2}$}]{};
				\node(v-ddd) at(2.5, 1.1){.};
	
				\begin{scope}[shift={(0,0.8)}, scale=1]
					\node(v-1) at (0, -2.2)[rectangle, draw]{\tiny $v_{r_{1}}$};
					\node(v0-1) at (-2, -1){ };
					\node(v0-3) at(2, -1){ };	
								
					\node(v-1-2) at (0, -1.6)[rectangle, draw]{\tiny $v_{r_{2}}$};
					\node(v0-1-2) at (-2, -0.4){ };
					\node(v0-3-2) at(2, -0.4){ };
					\node(v0-2-1-2) at(0, -0.8)[]{};
					\node(v0-2-2) at(0, -0.6)[]{};
					\draw(v-1-2)--(v0-1-2);
					\draw(v-1-2)--(v0-2-1-2);
					\draw(v-1-2)--(v0-3-2);
					\draw(v-1)--(v-1-2);
					\draw[dotted](v0-2-1-2)--(v0-2-2);
					\path[-, font=\scriptsize]
			
					(v-1) edge node[descr]{{\tiny$i_{1}$}} (v-1-2);
					\path[-, font=\scriptsize]
					(v-1-2) edge node[descr]{{\tiny$i_{2}$}} (v0-2-1-2);
			
					\draw[dotted](-0.5, -1.8)--(-0.1, -1.8);
					\draw[dotted](0.1, -1.8)--(0.5, -1.8);
			
					\draw[dotted](-0.5, -1.2)--(-0.1, -1.2);
					\draw[dotted](0.1, -1.2)--(0.5, -1.2);

					\node(vroot) at (0, -2.6){};
					\draw(v-1)--(vroot);
				\end{scope}
					\node(v0) at (0, 0.5)[rectangle, draw]{\tiny $v_{r_{t}}$};
					\node(v1-1) at (-2, 1.8){ };
					\node(v1-2) at(0, 1.2)[rectangle, draw]{\tiny $u_{p}$};
					\draw[dotted](-0.6, 1)--(-0.1, 1);
					\draw[dotted](0.1, 1)--(0.6, 1);
			
					\node(v1-3) at(2, 1.8){ };
					\node(v2-1) at(-1.9, 2.6){ };
					\node(v2-2) at (-0.9, 2.8)[rectangle, draw]{\tiny $v_{r_{t+1}}$};
					\node(v2-3) at (0, 2.9){};
					\node(v2-4) at(0.9, 2.8)[rectangle, draw]{\tiny $v_{r_{p}}$};
					\node(v2-5) at(1.9, 2.6){ };
					\node(v3-1) at (-1.6, 3.4){ };
					\node(v3-2) at (-0.4, 3.4){ };
					\node(v3-3) at (0.4, 3.4){ };
					\node(v3-4) at(1.6, 3.4){ };
			
					\draw(v0-2-2)--(v0);
					\draw(v0)--(v1-1);
					\draw(v0)--(v1-3);
					\draw(v-1)--(v0-1);
					\draw(v-1)--(v0-3);
					
					\path[-, font=\scriptsize]
					(v0) edge node[descr]{{\tiny$i_{t}$}} (v1-2);
					
					\draw(v1-2)--(v2-1);
					\draw(v1-2)--(v2-3);
					\draw(v1-2)--(v2-5);
					
					\path[-, font=\scriptsize]
					(v1-2) 	edge node[descr]{{\tiny$k_t$}} (v2-2)
					edge node[descr]{{\tiny$k_{p-1}$}} (v2-4);
					\draw(v2-2)--(v3-1);
					\draw(v2-2)--(v3-2);
					\draw(v2-4)--(v3-3);
					\draw(v2-4)--(v3-4);
			
					\draw[dotted](-0.5, 2.4)--(-0.1, 2.4);
					\draw[dotted](0.1, 2.4)--(0.5, 2.4);
					\draw[dotted](-1.4, 2.4)--(-0.8, 2.4);
					\draw[dotted](1.4, 2.4)--(0.8, 2.4);
					\draw[dotted](-1.2, 3.2)--(-0.7, 3.2);
					\draw[dotted](1.2, 3.2)--(0.7, 3.2);
			\end{tikzpicture}
	\end{eqnarray*}

	\item For any other tree $T $ in $\frakT$, define $\Delta_T = 0$.
\end{enumerate}

In the proof of the following result, we need the pre-Jacobi identity \cite{Ger63, GV95, Get93,CGWZ24}:
for any homogeneous elements $f, g_1, \dots, g_m, h_1, \dots, h_n$ in a graded operad $\calp$, we have
	\begin{align} \label{Eq. pre-Jacobi identity}
			& \Big(f \{g_1, \dots, g_m\}\Big)\{h_1, \dots, h_n\}=\\
			\notag
			& \sum\limits_{0\leqslant i_1\leqslant j_1\leqslant i_2\leqslant j_2\leqslant \dots \leqslant i_m\leqslant j_m\leqslant n}(-1)^{\theta}
			f \Big\{h_{1}, \dots, h_{i_1}, g_1\{h_{i_1+1}, \dots, h_{j_1}\}, \dots, g_m\{h_{i_m+1}, \dots, h_{j_m} \}, h_{j_m+1}, \dots, h_{n} \Big\},
	\end{align}
	where $\theta = \sum\limits_{k=1}^m (|g_k|)(\sum\limits_{j=1}^{i_k}|h_j|)$ and the operation $ -\{-\} $ is the brace operation introduced in \cite{GV95} (see also \cite[Definition 3.6]{CGWZ24}).

\begin{prop} \label{prop: homotopy cooperad cobar construction NjA}
	The graded collection $\mathscr{S}(\NjAoperad^\ac)$ endowed with the operations $\{\Delta_T\}_{T\in \frakT}$ introduced above forms a coaugmented homotopy cooperad, whose strict counit is the natural projection $\varepsilon:\mathscr{S}(\NjAoperad^\ac)\twoheadrightarrow \bfk u_1\cong \cali$ and the coaugmentation is just the natural embedding $\eta:\cali\cong \bfk u_1\hookrightarrow \mathscr{S}(\NjAoperad^\ac)$. 	
\end{prop}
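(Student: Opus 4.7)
The plan is to verify the defining coherence axioms of a coaugmented homotopy cooperad for $\mathscr{S}(\NjAoperad^\ac)$ equipped with the operations $\{\Delta_T\}_{T\in\frakT}$. Since $\mathscr{S}(\NjAoperad^\ac)$ carries no internal differential, the axioms reduce to the single family of identities
\begin{align*}
\sum_{T = T'\#_e T''}\pm\,\Delta_{T'}\circ_e \Delta_{T''}=0
\end{align*}
indexed by trees $T$ admitting a non-trivial edge decomposition, together with the counit and coaugmentation compatibilities. The cleanest way to organize this is to reformulate the axioms as $d_\Omega^2=0$ on the cobar construction $\Omega(\mathscr{S}(\NjAoperad^\ac))$, where $d_\Omega$ is assembled from the family $\{\Delta_T\}$ in the standard way: on a tree monomial, one sums, at each vertex, over the shapes $T$ for which $\Delta_T$ is non-zero. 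First I would record the formal data. The degree conventions $|u_n|=0$, $|v_n|=1$, combined with the weights of admissible trees (weight $2$ for type (I) and weight $p+1$ for type (II)), ensure that each cobar summand contributed by a $\Delta_T$ has total degree $+1$. The strict counit property with $\varepsilon$ and compatibility with $\eta$ are immediate by inspection: the image of $\Delta_T(v_n)$ never contains $u_1$ outside the two distinguished type (I) cases $j=1$ and $j=n$, and the type (I) image of $u_n$ involves $u_1$ only when $j=1$ or $n-j+1=1$.

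The heart of the proof is the expansion of $d_\Omega^2$ on a labelled tree monomial, whose terms I would organize according to the types of the two successive cooperations involved:
\begin{enumerate}
\item[(a)] two type (I) cooperations applied to some $u_n$;
\item[(b)] a type (I) and a type (II) cooperation applied to some $v_n$, where the type (I) output is $v_n\otimes u_1$ or $u_1\otimes v_n$;
\item[(c)] two type (II) cooperations applied to some $v_n$, where the inner coproduct acts either on the central $u_p$ vertex (widening the outer tree) or on one of the lateral $v_{r_i}$ vertices (raising its height).
\end{enumerate}
Family (a) cancels by the standard dual of the associativity relation \eqref{Eq: operadic associativity}. Family (b) matches the ``boundary'' contributions of type (II) trees with height parameter $t=0$ against the corresponding type (I) decompositions of the outer $v_n$, so that the two possible orderings of the cuts produce opposite signs. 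Family (c) is the principal balance: the pre-Jacobi identity \eqref{Eq. pre-Jacobi identity} rewrites the nested type (II) outputs into a common normal form, and at this point the sign $(-1)^{p(p-1)/2}$ together with the strict inequalities $k_t<\cdots<k_{p-1}$ are exactly what is needed for the two distinct resolutions of a height-$(t+3)$ type (II) tree to cancel.

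The main obstacle is family (c). The height parameter $t$ and the index set $\{k_t,\ldots,k_{p-1}\}$ in type (II) trees generate intricate combinatorics, and the signs must be tracked with care across the inductive step that passes from height $t+2$ to height $t+3$. A closely related subtlety, which must be resolved in parallel, is the self-consistency of the prescription $\Delta_T=0$ for all $T$ outside families (I) and (II): one must verify that every a priori cobar term whose underlying tree lies outside these two families appears only as part of a cancelling pair among admissible decompositions. This is precisely what justifies the somewhat unusual restriction to just two tree shapes in the definition of the $\Delta_T$. Once the above cancellations are established, the proposition follows, as the counit and coaugmentation assertions reduce to immediate inspection of the formulae defining $\Delta_T$.
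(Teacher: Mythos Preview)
Your overall strategy matches the paper's: reformulate the axioms as $\partial^2=0$ on the cobar construction and verify this on generators $x_n=s^{-1}u_n$, $y_n=s^{-1}v_n$ using the pre-Jacobi identity \eqref{Eq. pre-Jacobi identity}. Your family (a) is exactly the paper's computation of $\partial^2(x_n)=0$.

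However, your family (b) is a phantom. The type (I) decompositions of $v_n$ contribute nothing to the cobar differential: for $2\leqslant j\leqslant n-1$ one has $\Delta_T(v_n)=0$ by definition, while the cases $j=1$ and $j=n$ produce the counit $u_1$, which lies in the coaugmentation and is therefore absent from $s^{-1}\overline{\mathscr{S}(\NjAoperad^\ac)}$. Hence $\partial(y_n)$ is built \emph{entirely} from type (II) decompositions (Equation~\eqref{Eq: partial yn NjA}), and there is no type (I) cut of $v_n$ available to match against any $t=0$ boundary. Relatedly, your family (c) conflates two distinct mechanisms: applying $\partial$ to the central $x_p$ invokes a type (I) decomposition of $u_p$, not a second type (II) decomposition, so the heading ``two type (II) cooperations'' does not fit your own parenthetical description.

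The paper's organization of $\partial^2(y_n)$ starts from three blocks --- $\partial$ hitting a lower $y_{r_i}$ ($1\leqslant i\leqslant t$), the central $x_p$, or an upper $y_{r_i}$ ($t+1\leqslant i\leqslant p$) --- then applies pre-Jacobi to obtain seven families (I)--(VII), which cancel in six specific pairings that cut across your proposed dichotomy (for instance (IV) with $i=t$, $t=p$ cancels against (VI), and (V) with $q=j$ against (VII) with $s=0$). Your outline would need to be reorganized along these lines to go through.
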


\begin{proof}
	Equivalently, one needs to show that the induced derivation $\partial$ on the cobar construction of $\mathscr{S}(\NjAoperad^\ac)$, i.e., the free operad generated by $s^{-1}\overline{\mathscr{S}(\NjAoperad^\ac)}$, is a differential, that is, $\partial^2=0$.

	Denote $s^{-1}u_n, n\geqslant 2$ (resp. $s^{-1}v_n, n\geqslant 1$) by $x_n$ (resp. $y_n$) which are the generators of $s^{-1}\overline{\mathscr{S}(\NjAoperad^\ac)}$. Notice that $|x_n|=-1$ and $|y_n|=0$. By the definition of cobar construction of coaugmented homotopy cooperads, the action of differential $\partial$ on generators $x_n, y_n$ is given by the following formulas:
	\begin{eqnarray} \label{Eq: partial xn NjA}
		\partial(x_n) = -\sum\limits_{j=2}^{n-1}x_{n-j+1}\{x_j\}, \ n\geqslant 2,
	\end{eqnarray}
	\begin{eqnarray} \label{Eq: partial yn NjA} \partial(y_n) =
		-\sum_{\substack{ r_1+\cdots+r_p =n\\ r_1, \dots, r_p \geqslant 1 \\	2 \leqslant p \leqslant n}}
		\sum_{t=0}^{p}
		(-1)^{t}
		y_{r_{1}} \bigg\{ \cdots \Big\{ y_{r_{t}} \big \{ x_{p} \{ y_{r_{t+1}}, \dots, y_{r_{p}} \} \big\} \Big\} \cdots \bigg\}, \ n \geqslant 1.
	\end{eqnarray}
	Note that $\partial(x_{2})=0$, $\partial(y_1)=0$.

	We just need to prove that $\partial^2=0$ holds on generators $x_n, n\geqslant 2$ and $y_n, n\geqslant 1$, which can be checked by direct computations.
	As $\partial$ is a derivation with respect to $-\{-\}$, we have
	\begin{eqnarray*}
		\partial^2(x_n)&\stackrel{\eqref{Eq: partial xn NjA}}{=}&\partial\Big(-\sum\limits_{j=2}^{n-1}x_{n-j+1}\{x_j\}\Big)\\
		&=&-\sum\limits_{j=2}^{n-1}\partial(x_{n-j+1})\{x_j\}+\sum_{j=2}^{n-1}x_{n-j+1}\{\partial(x_j)\}\\
		&\stackrel{\eqref{Eq: partial xn NjA}}{=}&\sum_{i+j+k-2=n \atop
			2\leqslant i, j, k\leqslant n-2}(x_i\{x_j\})\{x_k\}-\sum_{i+j+k-2=n \atop
			2\leqslant i, j, k\leqslant n-2}x_i\{x_j\{x_k\}\}\\
		&\stackrel{\eqref{Eq. pre-Jacobi identity}}{=}&\sum_{i+j+k-2=n \atop
			2\leqslant i, j, k\leqslant n-2}x_i\{x_j\{x_k\}\}+\sum_{i+j+k-2=n \atop
			2\leqslant i, j, k\leqslant n-2}x_i\{x_j, x_k\}-\sum_{i+j+k-2=n \atop
			2\leqslant i, j, k\leqslant n-2}x_i\{x_k, x_j\}-\sum_{i+j+k-2=n \atop
			2\leqslant i, j, k\leqslant n-2}x_i\{x_j\{x_k\}\}\\
		&=&0,
	\end{eqnarray*}
	and
	\begin{eqnarray*}
		\partial^{2}(y_{n})
		&\stackrel{\eqref{Eq: partial yn NjA}}{=}
		& \partial \Big(
		-\sum_{\substack{ r_1+\cdots+r_p =n\\ r_1, \dots, r_p \geqslant 1 \\	2 \leqslant p \leqslant n}}
		\sum_{t=0}^{p}
		(-1)^{t}
		y_{r_{1}} \{ \cdots \{ y_{r_{t}} \{ x_{p} \{ y_{r_{t+1}}, \dots, y_{r_{p}} \} \} \} \cdots \}
		\Big)\\
		&=& -\sum_{\substack{ r_1+\cdots+r_p =n\\ r_1, \dots, r_p \geqslant 1 \\	2 \leqslant p \leqslant n-1}}
		\sum_{t=1}^{p}
		\sum_{i=1}^{t}
		(-1)^{t}
		y_{r_{1}}
		\{\cdots
		\{
		y_{r_{i-1}}\{
		\partial(y_{r_{i}})
		\{
		y_{r_{i+1}}
		\{
		\cdots \{ y_{r_{t}} \{ x_{p} \{ y_{r_{t+1}}, \dots, y_{r_{p}} \} \} \} \cdots
		\}
		\}
		\}
		\}
		\cdots
			\}\\
		&& -\sum_{\substack{ r_1+\cdots+r_p =n\\ r_1, \dots, r_p \geqslant 1 \\	3 \leqslant p \leqslant n}}
		\sum_{t=0}^{p}
		(-1)^{t}
		y_{r_{1}} \{ \cdots \{ y_{r_{t}} \{
		\partial(x_{p})
			\{ y_{r_{t+1}}, \dots, y_{r_{p}} \} \} \} \cdots \}\\
		&& +\sum_{\substack{ r_1+\cdots+r_p =n\\ r_1, \dots, r_p \geqslant 1 \\	2 \leqslant p \leqslant n-1}}
		\sum_{t=0}^{p-1}
		\sum_{i=t+1}^{p}
		(-1)^{t}
		y_{r_{1}} \{ \cdots \{ y_{r_{t}} \{ x_{p}
		\{ y_{r_{t+1}}, \dots, y_{r_{i-1}}, \partial(y_{r_{i}}), y_{r_{i+1}}, \dots, y_{r_{p}} \}
			\} \} \cdots \}\\
		&\stackrel{\eqref{Eq: partial xn NjA} \eqref{Eq: partial yn NjA}}{=}& \sum_{\substack{ r_1+\cdots+r_p =n\\ r_1, \dots, r_p \geqslant 1 \\	2 \leqslant p \leqslant n-1}}
		\sum_{t=1}^{p}
		\sum_{i=1}^{t}%
		\sum_{\substack{ l_1+\cdots+l_q =r_{i}\\ l_1, \dots, l_q \geqslant 1 \\	2 \leqslant q \leqslant r_{i} \leqslant n-1}}
		\sum_{s=0}^{q}
		(-1)^{s+t}\\
		&&\begin{aligned}
			y_{r_{1}}
			\{\cdots
			\{
			y_{r_{i-1}}\{ &\\
			\Big(
			y_{l_{1}} \{\cdots \{ y_{l_{s}} \{ & x_{q} \{ y_{l_{s+1}}, \dots, y_{l_{q}} \} \} \} \cdots \}
			\Big)
			\{ %
			y_{r_{i+1}}
			\{
			\cdots \{ y_{r_{t}} \{ x_{p} \{ y_{r_{t+1}}, \dots, y_{r_{p}} \} \} \} \cdots
			\}\}\\ %
			& \}\}\cdots\}
			\end{aligned}\\
		&& +\sum_{\substack{ r_1+\cdots+r_p =n\\ r_1, \dots, r_p \geqslant 1 \\	3 \leqslant p \leqslant n}}
		\sum_{t=0}^{p}
		\sum_{j=2}^{p-1}
		(-1)^{t}
		y_{r_{1}} \{ \cdots \{ y_{r_{t}} \{ 
		\Big(
		x_{p-j+1}\{x_{j}\}
		\Big) 
		\{ y_{r_{t+1}}, \dots, y_{r_{p}} \} \} \} \cdots \}\\
		&& -\sum_{\substack{ r_1+\cdots+r_p =n\\ r_1, \dots, r_p \geqslant 1 \\	2 \leqslant p \leqslant n-1}}
		\sum_{t=0}^{p-1}
		\sum_{i=t+1}^{p}
		\sum_{\substack{ l_1+\cdots+l_q =r_{i}\\ l_1, \dots, l_q \geqslant 1 \\	2 \leqslant q \leqslant r_{i} \leqslant n-1}}
		\sum_{s=0}^{q}
		(-1)^{s+t}\\
		&& y_{r_{1}} \{ \cdots \{ y_{r_{t}} \{ x_{p}
		\{ y_{r_{t+1}}, \dots, y_{r_{i-1}}, 
		y_{l_{1}} \{\cdots \{ y_{l_{s}} \{ x_{q} \{ y_{l_{s+1}}, \dots, y_{l_{q}} \} \} \} \cdots \}
		, y_{r_{i+1}}, \dots, y_{r_{p}} \}
		\} \} \cdots \}\\
		&\stackrel{\eqref{Eq. pre-Jacobi identity}}{=}& \sum_{\substack{ r_1+\cdots+r_p =n\\ r_1, \dots, r_p \geqslant 1 \\	2 \leqslant p \leqslant n-2}}
		\sum_{t=1}^{p}
		\sum_{i=1}^{t}%
		\sum_{\substack{ l_1+\cdots+l_q =r_{i}\\ l_1, \dots, l_q \geqslant 1 \\	2 \leqslant q \leqslant r_{i}-1 \leqslant n-2}}
		\sum_{s=1}^{q}
		\sum_{j=1}^{s}
		(-1)^{s+t}\\
		&& \underbrace{\begin{aligned}
		y_{r_{1}}
		\{\cdots
		\{
		y_{r_{i-1}}\{
		y_{l_{1}}
		\{\cdots \{
		y_{l_{j}}
		\{&\\
			y_{l_{j+1}} \{
		\cdots
		\{
		y_{l_{s}}
		\{
		x_{q}
		\{
		y_{l_{s+1}}, \dots, y_{l_{q}}
		\}
		\}
		\}
		\cdots
		\}
		&, 
		y_{r_{i+1}}
		\{
		\cdots \{ y_{r_{t}} \{ x_{p} \{ y_{r_{t+1}}, \dots, y_{r_{p}} \} \} \} \cdots
		\} \\
		& \}\}\cdots\} 	\}\}\cdots\}\\
		\end{aligned}}_{(\text{I})}\\
		&&- \sum_{\substack{ r_1+\cdots+r_p =n\\ r_1, \dots, r_p \geqslant 1 \\	2 \leqslant p \leqslant n-2}}
		\sum_{t=1}^{p}
		\sum_{i=1}^{t}%
		\sum_{\substack{ l_1+\cdots+l_q =r_{i}\\ l_1, \dots, l_q \geqslant 1 \\	2 \leqslant q \leqslant r_{i}-1 \leqslant n-2}}
		\sum_{s=1}^{q}
		\sum_{j=1}^{s}
		(-1)^{s+t}\\
		&& \underbrace{\begin{aligned}
		y_{r_{1}}
		\{\cdots
		\{
		y_{r_{i-1}}\{
		y_{l_{1}}
		\{\cdots \{
		y_{l_{j}}
		\{&\\
		y_{r_{i+1}}
		\{
		\cdots \{ y_{r_{t}} \{ x_{p} \{ y_{r_{t+1}}, \dots, y_{r_{p}} \} \} \} \cdots
		\} 
		&, 
		y_{l_{j+1}} \{
		\cdots
		\{
		y_{l_{s}}
		\{
		x_{q}
		\{
		y_{l_{s+1}}, \dots, y_{l_{q}}
		\}
		\}
		\}
		\cdots
		\}\\
		& \}\}\cdots\} 	\}\}\cdots\}
		\end{aligned}}_{(\text{II})}\\
		&& + \sum_{\substack{ r_1+\cdots+r_p =n\\ r_1, \dots, r_p \geqslant 1 \\	2 \leqslant p \leqslant n-1}}
		\sum_{t=1}^{p}
		\sum_{i=1}^{t}%
		\sum_{\substack{ l_1+\cdots+l_q =r_{i}\\ l_1, \dots, l_q \geqslant 1 \\	2 \leqslant q \leqslant r_{i} \leqslant n-1}}
		\sum_{s=0}^{q-1}
		\sum_{m=1}^{q-s}
		(-1)^{s+t}\\
		&& \underbrace{\begin{aligned}
		y_{r_{1}}
		\{\cdots
		\{
		y_{r_{i-1}}\{
		y_{l_{1}}
		\{\cdots \{
		y_{l_{s}}
		\{&\\
		x_{q}
		\{
		y_{l_{s+1}}, \dots, y_{l_{s+m-1}}, & y_{l_{s+m}}
		\{
		y_{r_{i+1}}
		\{
		\cdots \{ y_{r_{t}} \{ x_{p} \{ y_{r_{t+1}}, \dots, y_{r_{p}} \} \} \} \cdots
		\}
		\}
		, y_{l_{s+m+1}}, \dots, y_{l_{q}}
		\}
		\\
		& \}\}\cdots\} 	\}\}\cdots\}
		\end{aligned}}_{(\text{III})}\\
		&& + \sum_{\substack{ r_1+\cdots+r_p =n\\ r_1, \dots, r_p \geqslant 1 \\	2 \leqslant p \leqslant n-1}}
		\sum_{t=1}^{p}
		\sum_{i=1}^{t}%
		\sum_{\substack{ l_1+\cdots+l_q =r_{i}\\ l_1, \dots, l_q \geqslant 1 \\	2 \leqslant q \leqslant r_{i} \leqslant n-1}}
		\sum_{s=1}^{q}
		\sum_{m=0}^{q-s}
		(-1)^{s+t}\\
		&& \underbrace{\begin{aligned}
		y_{r_{1}}
		\{\cdots
		\{
		y_{r_{i-1}}\{
		y_{l_{1}}
		\{\cdots \{
		y_{l_{s}}
		\{&\\
		x_{q}
		\{
		y_{l_{s+1}}, \dots, & y_{l_{s+m}}
		,
		y_{r_{i+1}}
		\{
		\cdots \{ y_{r_{t}} \{ x_{p} \{ y_{r_{t+1}}, \dots, y_{r_{p}} \} \} \} \cdots
		\}
		, y_{l_{s+m+1}}, \dots, y_{l_{q}}
		\}
		\\
		& \}\}\cdots\} 	\}\}\cdots\}
		\end{aligned}}_{(\text{IV})}\\
		&& + \sum_{\substack{ r_1+\cdots+r_p =n\\ r_1, \dots, r_p \geqslant 1 \\	3 \leqslant p \leqslant n}}
		\sum_{t=0}^{p-1}
		\sum_{j=2}^{p-1}
		\sum_{i=0}^{p-t-1}
		\sum_{\substack{q=\max\{1, j-t\}}}^{j}
		(-1)^{t}\\
		&& \underbrace{y_{r_{1}} \{ \cdots \{ y_{r_{t}} \{ 
		x_{p-j+1}\{
		y_{r_{t+1}}, \dots, y_{r_{t+i}}, x_{j} \{ y_{r_{t+i+1}}, \dots, y_{r_{t+i+q}} \},
		y_{r_{t+i+q+1}}, \dots, y_{r_{p}} 
			\} \} \} \cdots \}}_{(\text{V})}\\
			&& + \sum_{\substack{ r_1+\cdots+r_p =n\\ r_1, \dots, r_p \geqslant 1 \\	3 \leqslant p \leqslant n}}
			\sum_{t=2}^{p}
			\sum_{j=2}^{\min\{t,p-1\}}
			\sum_{i=0}^{p-t}
			(-1)^{t}\\
			&& \underbrace{y_{r_{1}} \{ \cdots \{ y_{r_{t}} \{ 
			x_{p-j+1}\{
			y_{r_{t+1}}, \dots, y_{r_{t+i}}, x_{j}, y_{r_{t+i+1}}, \dots, y_{r_{p}} 
			\} \} \} \cdots \}}_{(\text{VI})}\\
			&& -\sum_{\substack{ r_1+\cdots+r_p =n\\ r_1, \dots, r_p \geqslant 1 \\	2 \leqslant p \leqslant n-1}}
			\sum_{t=0}^{p-1}
			\sum_{i=t+1}^{p}
			\sum_{\substack{ l_1+\cdots+l_q =r_{i}\\ l_1, \dots, l_q \geqslant 1 \\	2 \leqslant q \leqslant r_{i} \leqslant n-1}}
			\sum_{s=0}^{q}
			(-1)^{s+t}\\
			&& \underbrace{y_{r_{1}} \{ \cdots \{ y_{r_{t}} \{ x_{p}
			\{ y_{r_{t+1}}, \dots, y_{r_{i-1}}, 
			y_{l_{1}} \{\cdots \{ y_{l_{s}} \{ x_{q} \{ y_{l_{s+1}}, \dots, y_{l_{q}} \} \} \} \cdots \}
		, y_{r_{i+1}}, \dots, y_{r_{p}} \}
			\} \} \cdots \}}_{(\text{VII})}\\
			&=& 0.
	\end{eqnarray*}	
	The last equation holds for the following reasons:
	\begin{align*}
		& (\text{I}) + (\text{II}) = 0,\\
		& (\text{III}, i = 1) + (\text{VII}, s \neq 0) = 0,\\
		& (\text{III}, i \neq 1) + (\text{IV}, i \neq t) = 0,\\
		& (\text{IV}, i = t, t = p) + (\text{VI}) = 0,\\
		& (\text{IV}, i = t, t \neq p) + (\text{V}, q \neq j) = 0,\\
		& (\text{V}, q = j) + (\text{VII}, s = 0) = 0.
	\end{align*}
\end{proof}

We need some notations.
Define $\cals=\mathrm{End}_{\bfk s}^c$ to be the graded cooperad whose underlying graded collection is given by $\cals(n): =\Hom((\bfk s)^{\ot n}, \bfk s) \cong \bfk \delta_{n}$ for $ n\geqslant 1$, where $\delta_n \in \cals(n)$ is the map which takes $s^{\ot n}$ to $s$. The cooperad structure of $\cals$ is given by
$$\Delta_T(\delta_n):=(-1)^{(i-1)(j-1)}\delta_{n-j+1}\ot \delta_j\in \cals^{\ot T} $$
for the tree $T$ of the form
$$
\begin{tikzpicture}[scale=1, descr/.style={fill=white}]
	\tikzstyle{every node}=[thick, minimum size=2pt, inner sep=1pt]
	\node (T) at (-1.8,0.5){};
	\node[circle, fill=black, label = right : \tiny $n-j+1$] (a) at (0,0) {};
	\node (b1) at (-1,1) {};
	\node[circle, fill=black, label = right : \tiny $j$] (b2) at (0,1) {};
	\node (b3) at (1,1) {};
	\node (c1) at (-0.5,1.5) {};
	\node (c2) at (0.5,1.5) {};
	\draw (a)--(b1);
	\draw (a)--(b2);
	\draw (a)--(b3);
	\draw (b2)--(c1);
	\draw (b2)--(c2);
	\draw [dotted, line width=1pt] (-0.5, 0.6)--(0.5, 0.6);
	\path[-,font=\scriptsize] (a) edge node[descr]{$i$} (b2);
	\node (d) at (1.5,0.4) {.};

	\node(vroot) at (0, -0.4){};
	\draw(a)--(vroot);
\end{tikzpicture}
$$
We also define $\cals^{-1}$ to be the graded cooperad whose underlying graded collection is given by $\cals^{-1}(n):=\Hom((\bfk s^{-1})^{\ot n}, s^{-1}) \cong \bfk \varepsilon_{n} $ for $n\geqslant 1$, where $\varepsilon_n\in \cals^{-1}(n)$ is the map which takes $(s^{-1})^{\ot n}$ to $s^{-1}$. The cooperad structure of $\cals^{-1}$ is given by
$$\Delta_T(\varepsilon_n):=(-1)^{(j-1)(n-j+1-i)}\varepsilon_{n-j+1}\ot \varepsilon_j\in (\cals^{-1})^{\ot T}$$
for the tree $T$ that is the same as before.

We will justify the following definition by showing its cobar construction is exactly the minimal model of $\NjAoperad$, see Definition~\ref{defn: cobar minimal model NjA} and Theorem~\ref{Thm: Minimal model NjA}, hence the name ``Koszul dual homotopy cooperad".
\begin{defn}
	The homotopy cooperad $\mathscr{S}(\NjAoperad^\ac)\ot_{\mathrm{H}} \cals^{-1}$, the Hadamard product of $\mathscr{S}(\NjAoperad^\ac)$ and $\cals^{-1}$, is called the \textit{Koszul dual homotopy cooperad of $\NjAoperad$}, denoted by $\NjAoperad^\ac$.
\end{defn}

Precisely, the underlying graded collection of ${\NjAoperad^\ac}$ is
$${\NjAoperad^\ac}(n)=\bfk e_n\oplus \bfk o_n$$
with $e_n=u_n\ot \varepsilon_n$ and $o_n=v_n\ot \varepsilon_n$ for $n\geqslant 1$, thus $|e_n|=n-1$ and $|o_n|=n$.
The defining operations $\{\Delta_T\}_{T\in \frakT}$ is given by the following formulas:
\begin{enumerate}
	\item For a tree $T$ of type $\mathrm{(I)}$,
	\begin{eqnarray*}
	\begin{tikzpicture}[scale=0.9, descr/.style={fill=white}]
	\tikzstyle{every node}=[thick, minimum size=5pt, inner sep=1pt]
	\node(v-2) at(-1.8, 1)[minimum size=0pt, label=left:{$\Delta_T(e_n)=(-1)^{(j-1)(n-i+1)}$}]{};
	\node(sigma-1) at (1.8, 0.9){.};
	\node(v0) at (0, 0)[draw, rectangle]{{\small $e_{n-j+1}$}};
	\node(v1-1) at (-1.5, 1){};
	\node(v1-2) at(0, 1)[draw, rectangle]{\small$e_j$};
	\node(v1-3) at(1.5, 1){};
	\node(v2-1)at (-1, 1.7){};
	\node(v2-2) at(1, 1.7){};
	\draw(v0)--(v1-1);
	\draw(v0)--(v1-3);
	\draw(v1-2)--(v2-1);
	\draw(v1-2)--(v2-2);
	\draw[dotted](-0.4, 1.5)--(0.4, 1.5);
	\draw[dotted](-0.5, 0.5)--(-0.1, 0.5);
	\draw[dotted](0.1, 0.5)--(0.5, 0.5);
	\path[-, font=\scriptsize]
	(v0) edge node[descr]{{\tiny$i$}} (v1-2);

	\node(vroot) at (0, -0.5){};
	\draw(v0)--(vroot);
	\end{tikzpicture}
	\end{eqnarray*}
	We consider the following three distinct cases when introducing $\Delta_T(o_n)$:
	\begin{enumerate}
		\item when $j=1$,
		$$\begin{tikzpicture}[scale=0.9, descr/.style={fill=white}]
			\tikzstyle{every node}=[thick, minimum size=5pt, inner sep=1pt]
			\node(v-1) at(-2, 0.5)[minimum size=0pt, label=left:{ $\Delta_T(o_n)=$}]{};
			\node(dd) at(1.5, 0.5){;};
			\node(v0) at (0, 0)[draw, rectangle]{\small$o_n$};
			\node(v1-1) at (-1.3, 1){};
			\node(v1-2) at(0, 1)[draw, rectangle]{\small$e_1$};
			\node(v1-3) at(1.3, 1){};
			\node(v2-1)at (0, 1.7){};
			\draw(v0)--(v1-1);
			\draw(v0)--(v1-3);
			\draw(v1-2)--(v2-1);
			\draw[dotted](-0.5, 0.5)--(-0.1, 0.5);
			\draw[dotted](0.1, 0.5)--(0.5, 0.5);
			\path[-, font=\scriptsize]
			(v0) edge node[descr]{{\tiny$i$}} (v1-2);

			\node(vroot) at (0, -0.5){};
			\draw(v0)--(vroot);
		\end{tikzpicture}$$
		\item when $2\leqslant j\leqslant n-1$, $\Delta_T(o_n)=0$;
		\item when $j=n$,
		$$\begin{tikzpicture}[scale=0.9, descr/.style={fill=white}]
			\tikzstyle{every node}=[thick, minimum size=5pt, inner sep=1pt]
			\node(va) at(2, 0.7)[minimum size=0pt, label=left:{ $\Delta_T(o_n)=$}]{};
			\node(dd) at(4.7, 0.6){.};
			\node(ve) at (3.5, 0)[draw, rectangle]{\small $e_1$};
			\begin{scope}[shift={(0,-0.3)}, scale=1]
				\node(ve1) at (3.5, 1)[draw, rectangle]{\small $o_n$};
				\node(ve2-1) at(2.5, 1.8){};
				\node(ve2-2) at(4.5, 1.8){};
				\draw[dotted](3.1, 1.5)--(3.9, 1.5);
			\end{scope}
			\draw(ve)--(ve1);
			\draw(ve1)--(ve2-1);
			\draw(ve1)--(ve2-2);

			\node(vroot) at (3.5, -0.5){};
			\draw(ve)--(vroot);
		\end{tikzpicture}$$
	\end{enumerate}

	\item For a tree $T$ of type $\mathrm{(II)}$,

	\begin{eqnarray*}
		\begin{tikzpicture}[scale=1.2, descr/.style={fill=white}]
			\tikzstyle{every node}=[thick, minimum size=3pt, inner sep=1pt]
	
			\node(v-0-1) at(-4.3, 1.3)[minimum size=0pt, label=right:{$\Delta_T(o_n)=(-1)^\gamma$}]{};
			\node(v-ddd) at(2.5, 1.2){,};
	
			\begin{scope}[shift={(0,0.8)}, scale=1]
				\node(v-1) at (0, -2.2)[rectangle, draw]{\tiny $o_{r_{1}}$};
				\node(v0-1) at (-2, -1){ };
				\node(v0-3) at(2, -1){ };	
							
				\node(v-1-2) at (0, -1.6)[rectangle, draw]{\tiny $o_{r_{2}}$};
				\node(v0-1-2) at (-2, -0.4){ };
				\node(v0-3-2) at(2, -0.4){ };
				\node(v0-2-1-2) at(0, -0.9)[]{};
				\node(v0-2-2) at(0, -0.6)[]{};
				\draw(v-1-2)--(v0-1-2);
				\draw(v-1-2)--(v0-2-1-2);
				\draw(v-1-2)--(v0-3-2);
				\draw(v-1)--(v-1-2);
				\draw[dotted](v0-2-1-2)--(v0-2-2);
				\path[-, font=\scriptsize]
		
				(v-1) edge node[descr]{{\tiny$i_{1}$}} (v-1-2);
				\path[-, font=\scriptsize]
				(v-1-2) edge node[descr]{{\tiny$i_{2}$}} (v0-2-1-2);
		
				\draw[dotted](-0.5, -1.8)--(-0.1, -1.8);
				\draw[dotted](0.1, -1.8)--(0.5, -1.8);
		
				\draw[dotted](-0.5, -1.2)--(-0.1, -1.2);
				\draw[dotted](0.1, -1.2)--(0.5, -1.2);

				\node(vroot) at (0, -2.6){};
				\draw(v-1)--(vroot);
			\end{scope}
				\node(v0) at (0, 0.5)[rectangle, draw]{\tiny $o_{r_{t}}$};
				\node(v1-1) at (-2, 1.8){ };
				\node(v1-2) at(0, 1.2)[rectangle, draw]{\tiny $e_{p}$};
				\draw[dotted](-0.6, 1)--(-0.1, 1);
				\draw[dotted](0.1, 1)--(0.6, 1);
		
				\node(v1-3) at(2, 1.8){ };
				\node(v2-1) at(-1.9, 2.6){ };
				\node(v2-2) at (-0.9, 2.8)[rectangle, draw]{\tiny $o_{r_{t+1}}$};
				\node(v2-3) at (0, 2.9){};
				\node(v2-4) at(0.9, 2.8)[rectangle, draw]{\tiny $o_{r_{p}}$};
				\node(v2-5) at(1.9, 2.6){ };
				\node(v3-1) at (-1.6, 3.4){ };
				\node(v3-2) at (-0.4, 3.4){ };
				\node(v3-3) at (0.4, 3.4){ };
				\node(v3-4) at(1.6, 3.4){ };
		
				\draw(v0-2-2)--(v0);
				\draw(v0)--(v1-1);
				\draw(v0)--(v1-3);
				\draw(v-1)--(v0-1);
				\draw(v-1)--(v0-3);
				
				\path[-, font=\scriptsize]
				(v0) edge node[descr]{{\tiny$i_{t}$}} (v1-2);
				
				\draw(v1-2)--(v2-1);
				\draw(v1-2)--(v2-3);
				\draw(v1-2)--(v2-5);
				
				\path[-, font=\scriptsize]
				(v1-2) 	edge node[descr]{{\tiny$k_t$}} (v2-2)
				edge node[descr]{{\tiny$k_{p-1}$}} (v2-4);
				\draw(v2-2)--(v3-1);
				\draw(v2-2)--(v3-2);
				\draw(v2-4)--(v3-3);
				\draw(v2-4)--(v3-4);
		
				\draw[dotted](-0.5, 2.4)--(-0.1, 2.4);
				\draw[dotted](0.1, 2.4)--(0.5, 2.4);
				\draw[dotted](-1.4, 2.4)--(-0.8, 2.4);
				\draw[dotted](1.4, 2.4)--(0.8, 2.4);
				\draw[dotted](-1.2, 3.2)--(-0.7, 3.2);
				\draw[dotted](1.2, 3.2)--(0.7, 3.2);
			\end{tikzpicture}
	\end{eqnarray*}
	where
	\begin{align*}
	\gamma
	= \sum_{h=1}^{t} \Big(1-n-h+ \sum_{j=1}^{h} r_{j} \Big) (r_{h} - i_{h})
	+ \sum_{j=t+1}^{p}(r_{j}-1) (p-k_{j-1})
	+ \sum_{j=1}^{p-1}(p-j)r_{j}
	+ (p-t)(p-1).
	\end{align*}
	\item For any other tree $T$ in $\frakT$, $\Delta_T = 0$.
\end{enumerate}

\medskip

\section{The Minimal model of the operad for Nijenhuis associative algebras} \label{Section: minimal model NjA}

 In this section, we will prove that the cobar construction of the homotopy cooperad $\NjAoperad^\ac$ is exactly the minimal model for $\NjAoperad$, the operad of Nijenhuis associative algebras. Then we will introduce the notion of homotopy Nijenhuis associative algebras. 

\subsection{The dg operad for homotopy Nijenhuis associative algebras}\

 \begin{defn} \label{defn: cobar minimal model NjA}
	 The \textit{dg operad for homotopy Nijenhuis associative algebras}, denoted by $\NjAoperad_{\infty}$, is defined to be the cobar construction $\Omega(\NjAoperad^\ac)$.
\end{defn}

By the definition of the cobar construction of a coaugmented homotopy cooperad, see \cite[Definition 3.4]{CGWZ24}, the dg operad $\NjAoperad_{\infty}=\Omega(\NjAoperad^\ac)$ is the free operad generated by the graded collection $s^{-1} \overline{\NjAoperad^\ac}$ endowed with differential induced from the homotopy cooperad structure on $\NjAoperad^\ac$. More precisely,
$$ s^{-1} \overline{\NjAoperad^\ac}(1) = \bfk s^{-1}o_{1} \quad \mathrm{and} \quad s^{-1} \overline{\NjAoperad^\ac}(n) = \bfk s^{-1}e_{n} \oplus \bfk s^{-1}o_{n},\ n \geqslant 2. $$
Denote $m_{n} = s^{-1}e_{n}$ for $n \geqslant 2$ and $P_{n} = s^{-1} o_{n}$ for $n \geqslant 1$ respectively, so $|m_{n}| = n-2$ and $|P_{n}| = n-1$. A direct calculation shows that the action of differential on these generators in $\Omega(\NjAoperad^\ac)$ is given by:
for $ n \geqslant 2 $,
\begin{equation} \label{Eq. homotopy NjA eq1}
	\begin{aligned}
		\partial(m_n)
		 =&\ \sum_{j=2}^{n-1}\sum_{i=1}^{n-j+1}(-1)^{i+j(n-i)}m_{n-j+1}\circ_i m_j,
	\end{aligned}
\end{equation}
and for $ n \geqslant 1 $,
\begin{equation} \label{Eq. homotopy NjA eq2}
	\begin{aligned}
			 \partial (P_{n})
			= &\ \sum_{ \substack{ r_1+\cdots+r_p =n
					\\	r_1, \dots, r_p \geqslant 1
					\\	2 \leqslant p \leqslant n} }
			\sum_{ t=0}^{p}
			\sum_{\substack{ 1 \leqslant i_{1} \leqslant r_{1} \\ \cdots \\ 1 \leqslant i_{t} \leqslant r_{t} }}
			\sum_{1 \leqslant k_{t} < \cdots < k_{p-1} \leqslant p}
			(-1)^{\alpha'} \\
			& P_{r_{1}} \circ_{i_{1}}
			\Bigg(
			P_{r_{2}} \circ_{i_{2}}
			\bigg(
			\cdots
			\circ_{i_{t-1}}
			\Big(
			P_{r_{t}} \circ_{i_{t}}
			\big(
			(
			\cdots
			((m_{p} \circ_{k_{t}} P_{r_{t+1}}) \circ_{k_{t+1}+r_{t+1}-1} P_{r_{t+2}} )
			\cdots
			) \circ_{\beta} P_{r_{p}}
			\big)
			\Big)
			\cdots
			\bigg)
			\Bigg),
	\end{aligned}
\end{equation}
where	
\begin{align*}
	\beta &= k_{p-1}+r_{t+1} + \cdots + r_{p-1}-(p-1-t),\\
	\alpha'
	&= 1+ \sum^{t}_{q=1} \Big(i_{q} + \big(\sum_{s=q+1}^{p} r_{s}\big) (r_{q}-i_{q}) - q(r_{q}-i_{q})\Big)
	+ \sum_{i=t+1}^{p}(k_{i-1} -p)(r_{i}-1).
\end{align*}

%


We use the corolla with $n$ leaves and a solid vertex to represent generators $m_n$ for $ n\geqslant 2$ and the corolla with $n$ leaves and a hollow vertex to represent generators $P_n$ for $n\geqslant 1$:
\begin{eqnarray*}
	\begin{tikzpicture}[scale=0.8]
		\tikzstyle{every node}=[thick, minimum size=5pt, inner sep=1pt]
		\node(r)[fill=black, circle, label=right:$m_n$] at (0,0){};
		\node (a1) at (-1.3,1.3){}; 
			\draw (r)--(a1);
		\node (a2) at (0,1.3){};
			\draw (r)--(a2);
		\node (a2) at (1.3,1.3){}; 
			\draw (r)--(a2);
		\draw [dotted, line width=1pt] (-0.8, 1)--(-0.1, 1);
		\draw [dotted, line width=1pt] (0.8, 1)--(0.1, 1);

		\node(vroot) at (0, -0.6){};
		\draw(r)--(vroot);
	\end{tikzpicture}
	\hspace{8mm}
	\begin{tikzpicture}[scale=0.8]
		\tikzstyle{every node}=[thick, minimum size=5pt, inner sep=1pt]
		\node(r)[draw, circle, label=right: $P_n$] at (0,0){};
		\node (a1) at (-1.3,1.3){}; 
			\draw (r)--(a1);
		\node (a2) at (0,1.3){};
			\draw (r)--(a2);
		\node (a2) at (1.3,1.3){}; 
			\draw (r)--(a2);
		\draw [dotted, line width=1pt] (-0.8, 1)--(-0.1, 1);
		\draw [dotted, line width=1pt] (0.8, 1)--(0.1, 1);

		\node(vroot) at (0, -0.6){};
		\draw(r)--(vroot);
	\end{tikzpicture}
\end{eqnarray*}

A tree with all vertices dyed solid or hollow (such a tree will be called a tree monomial) gives an element in $\NjAoperad_\infty$ by composing its vertices in the planar order, which is given by counting the vertices starting from the root clockwisely along the tree.
Conversely, any element in $\NjAoperad_\infty$ can be represented by such a tree monomial in this way.
In this means, the action of the differential operator $\partial$ on generators, i.e., Equations~\eqref{Eq. homotopy NjA eq1}-\eqref{Eq. homotopy NjA eq2}, can be expressed by trees as follows:
\begin{eqnarray*}
&\begin{tikzpicture}
	\tikzstyle{every node}=[thick, minimum size=5pt, inner sep=1pt]
	\begin{scope}[shift={(2,0)}] 
		\node(a) at (-9.7, 0){\large $\partial$};
		\node(bd) at (0,-0.2){,};
		\node[circle, fill=black, label=right:$m_n$] (b0) at (-8.5, -0.45) {};
		\node (b1) at (-9.3, 0.5) [minimum size=0pt, label=above:]{}; 
		\node (b2) at (-8.5, 0.5) [minimum size=0pt]{};
		\node (b3) at (-7.7, 0.5) [minimum size=0pt, label=above:]{}; 
		\draw (b0)--(b1);
		\draw (b0)--(b2);
		\draw (b0)--(b3);
		\draw [dotted, line width=1pt] (-9, 0.3)--(-8.6, 0.3);
		\draw [dotted, line width=1pt] (-8.4, 0.3)--(-8, 0.3);

		\node(vroot) at (-8.5, -0.9){};
		\draw(b0)--(vroot);
	\end{scope}
	\node(eq) at (-3.5, 0){$= \sum\limits_{j=2}^{n-1} \sum\limits_{i=1}^{n-j+1}
		(-1)^{i+j(n-i)} $};
	\node(e0) at (0, -1.5)[circle, fill=black, label=right:$m_{n-j+1}$]{};
	\node(e1) at(-1.5, 0){}; 
	\node(e2-0) at (0, -0.5){{\tiny$i$}}; %
	\node(e3) at (1.5, 0){}; 
	\node(e2-1) at (0, 0.5) [circle, fill=black, label=right: $m_j$]{};
	\node(e2-1-1) at (-1, 1.5){}; 
	\node(e2-1-2) at (1, 1.5){}; 
	\draw [dotted, line width=1pt] (-0.7, -0.5)--(-0.2, -0.5);
	\draw [dotted, line width=1pt] (0.3, -0.5)--(0.8, -0.5);
	\draw [dotted, line width=1pt] (-0.4, 1.2)--(0.4, 1.2);
	\draw (e0)--(e1);
	\draw (e0)--(e3);
	\draw (e0)--(e2-0);
	\draw (e2-0)--(e2-1);
	\draw (e2-1)--(e2-1-1);
	\draw (e2-1)--(e2-1-2);

	\node(vroot) at (0, -2){};
	\draw(e0)--(vroot);
\end{tikzpicture}
\\
&\begin{tikzpicture}[scale=0.6]
\tikzstyle{every node}=[thick, minimum size=5pt, inner sep=1pt]
	\begin{scope}[shift={(1.5,0)}] 
		\node(a) at (-4, 0){\large$\partial$};
		\node(bd) at (22.5,-0.3){.};
		\node[circle, draw, label=right:$P_n$] (b0) at (-2, -0.7) {};
		\node (b1) at (-3.5, 1) [minimum size=0pt]{}; 
		\node (b2) at (-2, 1) [minimum size=0pt]{};
		\node (b3) at (-0.5, 1) [minimum size=0pt]{}; 
		\draw (b0)--(b1);
		\draw (b0)--(b2);
		\draw (b0)--(b3);
		\draw [dotted, line width=1pt] (-2.9, 0.6)--(-2.2, 0.6);
		\draw [dotted, line width=1pt] (-1.8, 0.6)--(-1.1, 0.6);

		\node(vroot) at (-2, -1.5){};
		\draw(b0)--(vroot);
	\end{scope}
	\node (eq2) at (7, -0.5){$= \sum\limits_{ \substack{ r_1+\cdots+r_p =n
				\\	r_1, \dots, r_p \geqslant 1
				\\	2 \leqslant p \leqslant n} }
		\sum\limits_{ t=0}^{p}
		\sum\limits_{\substack{ 1 \leqslant i_{1} \leqslant r_{1} \\ \cdots \\ 1 \leqslant i_{t} \leqslant r_{t} }}
		\sum\limits_{1 \leqslant k_{t} < \cdots < k_{p-1} \leqslant p}
		(-1)^{\alpha'} $};
	\begin{scope}[shift={(18,-5)}, scale=1.3]
		\node(1) at(0, 0)[circle, draw ]{};
		\node(1+) at(0.4, -0.4){\tiny $ P_{r_{1}} $};
		\node(1-1) at (-4, 1.5){};
		\node(1-2) at (0, 0.9){\tiny $ i_{1} $};
		\node(1-3) at (4, 1.5){};
		\draw(1)--(1-1);
		\draw(1)--(1-2);
		\draw(1)--(1-3);
		\draw[dotted, line width=1pt](-1, 0.6)--(-0.2, 0.6);
		\draw[dotted, line width=1pt](0.2, 0.6)--(1, 0.6);
		
		\node(2) at(0, 1.5)[circle, draw ]{};
		\node(2+) at(0.4, 1.1){\tiny $ P_{r_{2}} $};
		\node(2-1) at (-4, 3){};
		\node(2-2) at (0, 2.4){\tiny $ i_{2} $};
		\node(2-3) at (4, 3){};
		\draw(2)--(2-1);
		\draw(2)--(2-2);
		\draw(2)--(2-3);
		\draw(1-2)--(2);
		\draw[dotted, line width=1pt](-1, 2.1)--(-0.2, 2.1);
		\draw[dotted, line width=1pt](0.2, 2.1)--(1, 2.1);
		
		\node(3) at (0, 4)[circle, draw ]{};
		\node(3+) at(0.4, 3.6){\tiny $ P_{r_{t}} $};
		\node(3-) at (0, 3.45){};
		\draw[dotted, line width=1pt](2-2)--(3-);
		\draw(3-)--(3);
		\node(3-1) at (-4, 5.5){};
		\node(3-2) at (0, 4.9){\tiny $ i_{t} $};
		\node(3-3) at (4, 5.5){};
		\draw(3)--(3-1);
		\draw(3)--(3-2);
		\draw(3)--(3-3);
		\draw[dotted, line width=1pt](-1, 4.6)--(-0.2, 4.6);
		\draw[dotted, line width=1pt](0.2, 4.6)--(1, 4.6);
		
		\node(4) at (0, 5.5)[circle, fill=black]{};
		\draw(3-2)--(4);
		\node(3+) at(0.4, 5.1){\tiny $ m_{p} $};
		\node(4-0) at (-4, 7){};
		\node(4-1) at (-2.5, 7)[circle, draw, label=left:\tiny$ P_{r_{t+1}} $]{};
		\node(4-2) at (-0.5, 7)[circle, draw, label=left:\tiny$ P_{r_{t+2}} $]{};
		\node(4-3) at (2.5, 7)[circle, draw, label=left:\tiny$ P_{r_{p}} $]{};
		\node(4-4) at (4, 7){};
		
		\node(4-1-) at (-1.48, 6.4){\tiny$ k_{t} $};
		\node(4-2-) at (-0.3, 6.4){\tiny$ k_{t+1} $};
		\node(4-3-) at (1.5, 6.4){\tiny$ k_{p-1} $};
		
		\draw(4)--(4-0);
		\draw(4)--(4-1-);
		\draw(4)--(4-2-);
		\draw(4)--(4-3-);
		\draw(4)--(4-4);
		
		\draw(4-1-)--(4-1);
		\draw(4-2-)--(4-2);
		\draw(4-3-)--(4-3);
		\draw[dotted, line width=1pt](-1.2, 6)--(-0.9, 6);
		\draw[dotted, line width=1pt](-0.7, 6)--(-0.2, 6);
		\draw[dotted, line width=1pt](-0.1, 6)--(0.8, 6);
		\draw[dotted, line width=1pt](0.9, 6)--(1.2, 6);
		
		\node(4-1-1) at (-3, 8){};
		\node(4-1-2) at (-2, 8){};
		\draw(4-1)--(4-1-1);
		\draw(4-1)--(4-1-2);
		\draw[dotted, line width=1pt](-2.8,7.7)--(-2.2,7.7);
		
		\node(4-2-1) at (-1, 8){};
		\node(4-2-2) at (0, 8){};
		\draw(4-2)--(4-2-1);
		\draw(4-2)--(4-2-2);
		\draw[dotted, line width=1pt](-0.8,7.7)--(-0.2,7.7);
		
		\node(4-3-1) at (2, 8){};
		\node(4-3-2) at (3, 8){};
		\draw(4-3)--(4-3-1);
		\draw(4-3)--(4-3-2);
		\draw[dotted, line width=1pt](2.2,7.7)--(2.8,7.7);

		\node(vroot) at (0, -0.8){};
		\draw(1)--(vroot);
	\end{scope}
\end{tikzpicture}
\end{eqnarray*}

\subsection{The minimal model}\

Let's recall the notion of minimal models of operads.
For a graded collection $M=\{M(n)\}_{n\geqslant 1} $, denote by $ \mathcal{F}(M)$ the free graded operad generated by $M$. Recall that a dg operad is called quasi-free if its underlying graded operad is free.
\begin{defn} \cite{DCV13}\label{minimal model}
	A \textit{minimal model} of a dg operad $\mathcal{P}$ is a quasi-free dg operad $ (\mathcal{F}(M), \partial)$ together with a surjective quasi-isomorphism of dg operads $(\mathcal{F}(M), \partial)\overset{\sim}{\twoheadrightarrow}\mathcal{P}$, where the dg operad $(\mathcal{F}(M), \partial)$ satisfies the following conditions:
	\begin{enumerate}
		\item the differential $\partial$ is decomposable, i.e., $\partial$ takes $M$ to $\mathcal{F}(M)^{(\geqslant 2)}$, the subspace of $\mathcal{F}(M)$ consisting of elements with weight $\geqslant 2$; \label{it:min1}
		\item the generating collection $M$ admits a decomposition $M=\bigoplus\limits_{i\geqslant 1}M_{(i)}$ such that $\partial(M_{(k+1)})\subset \mathcal{F}\big(\bigoplus\limits_{i=1}^kM_{(i)}\big)$ for all $k\geqslant 1$. \label{it:min2}
	\end{enumerate}
\end{defn}
\begin{thm} \cite{DCV13}
	When a dg operad $\mathcal{P}$ admits a minimal model, it is unique up to isomorphisms.
\end{thm}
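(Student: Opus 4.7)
The plan is a standard Whitehead-type lifting argument in the category of dg operads. Given two minimal models $\pi_M : (\mathcal{F}(M), \partial_M) \twoheadrightarrow \calp$ and $\pi_N : (\mathcal{F}(N), \partial_N) \twoheadrightarrow \calp$, I would first construct a dg operad morphism $\phi : (\mathcal{F}(M), \partial_M) \to (\mathcal{F}(N), \partial_N)$ with $\pi_N \circ \phi = \pi_M$, and then use the minimality conditions (i) and (ii) to upgrade this quasi-isomorphism to an actual isomorphism.

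For the construction of $\phi$, I would proceed by a double induction: an outer induction on the filtration weight $M_{(1)} \subset M_{(1)}\oplus M_{(2)} \subset \cdots$ provided by condition (ii), and for each stage an inner obstruction-theoretic argument using the acyclicity of $\ker \pi_N$. On generators $x \in M_{(1)}$, the image $\pi_M(x) \in \calp$ lifts to some element of $\mathcal{F}(N)$ via surjectivity of $\pi_N$; the obstruction to making this lift compatible with differentials is controlled by $\partial_M(x)$, which sits in $\mathcal{F}(M)^{(\geqslant 2)}$ by condition (i). For the inductive step on $M_{(k+1)}$, condition (ii) gives $\partial_M(x) \in \mathcal{F}(M_{(\leqslant k)})$, so $\phi(\partial_M(x))$ is already defined, is a cycle in $\mathcal{F}(N)$ (by the inductive hypothesis), and maps under $\pi_N$ to $\partial_{\calp}\pi_M(x)$, i.e.\ to a boundary; the lifting property then supplies the required $\phi(x)$. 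By the 2-out-of-3 property applied to $\pi_M = \pi_N \circ \phi$, the morphism $\phi$ is automatically a quasi-isomorphism.

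To show $\phi$ is an isomorphism, I would apply the indecomposables functor $Q$, which sends a quasi-free dg operad $(\mathcal{F}(V), \partial)$ to the pair $(V, \bar\partial)$ with $\bar\partial$ the linear part of $\partial$ modulo $\mathcal{F}(V)^{(\geqslant 2)}$. Condition (i) forces $\bar\partial_M = 0$ and $\bar\partial_N = 0$, so $Q\phi : (M,0) \to (N,0)$ is simply a graded linear map between trivial complexes. A spectral sequence argument using the exhaustive, bounded-below filtration from condition (ii) on both sides identifies the $E_1$-pages with $M$ and $N$ respectively; since $\phi$ is a quasi-isomorphism, $Q\phi$ must then be a bijection. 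A final weight-by-weight induction recovers from this that $\phi$ is bijective on free graded operads, hence an isomorphism of dg operads.

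The main obstacle is the last step, namely showing that a quasi-isomorphism between minimal quasi-free dg operads induces an isomorphism on the generating spaces via the indecomposables functor. This is the operadic analog of Sullivan's uniqueness theorem for minimal CDGA models, and its validity rests on the combined use of conditions (i) and (ii): condition (i) guarantees that the linear part of the differential vanishes, so the indecomposables are a summand rather than a subquotient of cycles by boundaries; condition (ii) ensures that the weight filtration is compatible with $\partial$ in a way that makes the relevant spectral sequence converge and collapse in a controlled manner. Without both hypotheses together, one can easily produce quasi-isomorphisms between non-isomorphic quasi-free dg operads, so the entire argument would fail.
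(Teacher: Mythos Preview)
The paper does not provide its own proof of this theorem: it is stated with a citation to \cite{DCV13} (Drummond-Cole and Vallette) and used as a black box, so there is nothing in the present paper to compare your proposal against. Your sketch follows the standard Sullivan-style argument that one finds in \cite{DCV13} --- lift along the trivial fibration using the cofibrancy supplied by condition~(ii), then show the resulting quasi-isomorphism is an isomorphism by passing to indecomposables and exploiting that condition~(i) kills the linear part of the differential --- so in spirit you are reproducing exactly the proof the paper is quoting.

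One small caution on your sketch: in the lifting step for $M_{(1)}$ you say the obstruction to making the lift a chain map ``is controlled by $\partial_M(x)$, which sits in $\mathcal{F}(M)^{(\geqslant 2)}$,'' but for $x\in M_{(1)}$ condition~(ii) actually forces $\partial_M(x)=0$ (there is no $M_{(0)}$), so this base case is simpler than you suggest; the genuine obstruction-theoretic work only begins at $M_{(2)}$. Also, the spectral sequence you invoke for the final step needs the filtration by $\mathcal{F}(M_{(\leqslant k)})$ on the free operad, not just on the generators, and one must check that $\phi$ respects these filtrations (it does, by construction) before comparing $E_1$-pages. These are routine but worth stating explicitly if you intend to write out a full proof rather than cite \cite{DCV13}.
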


The following result is the main result of this section. 

\begin{thm} \label{Thm: Minimal model NjA}
	The dg operad $\NjAoperad_\infty$ is the minimal model of the operad $\NjAoperad$.
\end{thm}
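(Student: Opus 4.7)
The plan is to construct an explicit surjective dg morphism $\pi: \NjAoperad_\infty \twoheadrightarrow \NjAoperad$, verify the minimality conditions of Definition~\ref{minimal model} directly, and then establish that $\pi$ is a quasi-isomorphism via a Gr\"obner--Shirshov / PBW argument adapted from \cite{WZ24}. I will define $\pi$ as the morphism of graded operads sending $m_2 \mapsto m$, $P_1 \mapsto P$, and $m_n, P_\ell \mapsto 0$ for $n \geqslant 3$, $\ell \geqslant 2$. Using formulas \eqref{Eq. homotopy NjA eq1}--\eqref{Eq. homotopy NjA eq2}, one checks that $\pi(\partial(m_3))$ recovers the associativity relation \eqref{Eq: operadic associativity} and that $\pi(\partial(P_2))$ recovers (up to an overall sign) the Nijenhuis relation \eqref{Eq: operadic Nijenhuis relation}, so both vanish in $\NjAoperad$. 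For $n \geqslant 4$ (resp.\ $n \geqslant 3$), every summand of $\partial(m_n)$ (resp.\ $\partial(P_n)$) contains some $m_k$ with $k \geqslant 3$ or some $P_\ell$ with $\ell \geqslant 2$, because no such sum can use only $m_2$'s and $P_1$'s, and hence its image under $\pi$ is automatically zero. This makes $\pi$ a well-defined dg morphism, surjective since $m, P$ generate $\NjAoperad$.

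To verify minimality, decomposability of $\partial$ is immediate from \eqref{Eq. homotopy NjA eq1}--\eqref{Eq. homotopy NjA eq2}, since every summand on the right is a composition of at least two generators. For the filtration condition, I will assign weights $w(m_n) := (n, 0)$ and $w(P_n) := (n, 1)$ with lexicographic order, grouping the generators into subcollections $M_{(n,\epsilon)}$ accordingly. Inspection of \eqref{Eq. homotopy NjA eq1} shows that $\partial(m_n)$ uses only $m_j$ with $j < n$, and inspection of \eqref{Eq. homotopy NjA eq2} shows that $\partial(P_n)$ uses only $P_r$ with $r < n$ together with a single $m_p$ whose weight $(p,0)$ is strictly less than $(n,1)$. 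This yields $\partial(M_{(n,\epsilon)}) \subset \mathcal{F}\bigl(\bigoplus_{(k,\eta) < (n,\epsilon)} M_{(k,\eta)}\bigr)$, and after reindexing the lex order as a sequence one obtains condition (ii) of Definition~\ref{minimal model}.

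The core of the theorem is showing that $\pi$ is a quasi-isomorphism. Since $\NjAoperad$ is not Koszul, classical Koszul duality does not apply, and I will instead follow the Gr\"obner--Shirshov strategy of \cite{WZ24}, choosing a new monomial order on tree monomials in the free operad $\mathcal{F}(\bfk m \oplus \bfk P)$ tailored to the cubic Nijenhuis relation \eqref{Eq: operadic Nijenhuis relation}. Starting from the defining relations \eqref{Eq: operadic associativity} and \eqref{Eq: operadic Nijenhuis relation}, I will iteratively resolve critical pairs; the expected outcome, guided by the explicit shape of \eqref{Eq. homotopy NjA eq1}--\eqref{Eq. homotopy NjA eq2}, is that exactly one new Gr\"obner basis element appears for each $n \geqslant 3$ (matching $m_n$) and each $n \geqslant 2$ (matching $P_n$), and that these elements lift respectively to $\pi(\partial(m_n))$ and $\pi(\partial(P_n))$. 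Filtering $\NjAoperad_\infty$ by the weight filtration of the previous paragraph and examining the associated graded, the normal-form tree monomials (those avoiding the leading terms of the Gr\"obner basis) will simultaneously form a basis of $\NjAoperad$ and a set of surviving cocycle representatives in $H_0(\NjAoperad_\infty)$; an acyclic matching on non-normal monomials built from the Gr\"obner reduction will then force $H_{>0}(\NjAoperad_\infty) = 0$.

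The main obstacle will be this last step. The cubic Nijenhuis relation is noticeably more intricate than the quadratic Rota--Baxter relation treated in \cite{WZ24}, and the completion process must reproduce exactly the two-type tree family of Section~\ref{Section: Koszul dual cooperad NjA} together with the signs $\alpha'$ appearing in \eqref{Eq. homotopy NjA eq2}. Identifying the correct monomial order, verifying closure of the Gr\"obner basis (so that no generators beyond $m_n$ and $P_n$ are forced upon us), and matching all signs between the combinatorics of the completion and the formulas for $\partial$ are where the technical heart of the argument will lie.
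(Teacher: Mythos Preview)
Your initial steps---defining the surjection $\pi$, verifying decomposability of $\partial$, and exhibiting a weight filtration on the generators---agree with the paper, which simply asserts that the minimality conditions are ``easily seen'' and then carries out the same $H_0$ computation you describe.

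The divergence is in the proof that $H_{>0}(\NjAoperad_\infty)=0$. You propose a Gr\"obner basis computation in $\mathcal{F}(\bfk m \oplus \bfk P)$: resolve critical pairs starting from the two defining relations, hope to recover one basis element for each $m_n$ and $P_n$, and then run an acyclic matching. The paper does something different and more direct: it places a monomial order on $\NjAoperad_\infty$ itself (the free operad on \emph{all} generators $m_n,P_n$), via a weight function $\varphi(m_n)=n-1$, $\varphi(P_n)=2n-1$ and a carefully chosen path-lexicographic comparison, and then identifies the \emph{leading term of $\partial(m_n)$ and of $\partial(P_n)$} under this order. The weights are engineered precisely so that these leading terms coincide with those arising in the Rota--Baxter case; once that coincidence is checked, the construction of the contracting homotopy and the vanishing of higher homology are quoted verbatim from \cite[Theorem~3.5]{WZ24}.

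Your route is not wrong in spirit, but it has a translation step you have not addressed. A Gr\"obner basis in $\mathcal{F}(\bfk m \oplus \bfk P)$ gives normal forms for $\NjAoperad$ and a rewriting system on trees in $m$ and $P$, whereas $\NjAoperad_\infty$ is free on a different generating set $\{m_n,P_n\}$. Converting that rewriting into an acyclic matching on $\NjAoperad_\infty$ requires either identifying the Anick-type resolution attached to your Gr\"obner basis with $\NjAoperad_\infty$ (which is itself a nontrivial combinatorial claim, and such resolutions are not automatically minimal), or some other bridge you have not specified. The paper's device---work in $\NjAoperad_\infty$ from the start, choose a new monomial order whose leading terms match the Rota--Baxter ones, and invoke \cite{WZ24}---sidesteps the entire Gr\"obner completion you anticipate as the ``technical heart'' and is substantially shorter.
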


\begin{proof}

 We follow the line of proof for the minimal model of Rota-Baxter associative operad \cite{WZ24}. However, there are important differences.

It can be easily seen that the differential $\partial$ on $\NjAoperad_\infty$ satisfies Conditions (i) and (ii) in Definition~\ref{minimal model}, the definition of minimal models.
In order to prove Theorem~\ref{Thm: Minimal model NjA}, we only need to construct a surjective quasi-isomorphism of dg operads from $\NjAoperad_\infty$ to $\NjAoperad$, where $\NjAoperad$, defined in Definition~\ref{def: operad NjA}, is considered as a dg operad concentrated in degree 0.

%

Introduce a morphism of dg operads $\phi: \NjAoperad_\infty \twoheadrightarrow \NjAoperad$, which sends generators $m_{2}$ to $m$, $P_{1}$ to $P$, and all other generators to $0$.
	The degree zero part of $\NjAoperad_\infty$ is the free graded operad generated by $ m_2$ and $P_1$. The image of $\partial$ in this degree zero part is the operadic ideal generated by $\partial (m_3)$ and $\partial (P_2) $. By definition, we have:
	\begin{align*}
		\partial(m_3) &= - m_2 \circ_1 m_2 + m_2 \circ_2 m_2, \\
		\partial (P_2)
		&= - (m_{2} \circ_{1} P_{1}) \circ_{2} P_{1}
		+ P_{1} \circ_{1} (m_{2} \circ_{1} P_{1})
		+ P_{1} \circ_{1} (m_{2} \circ_{2} P_{1})
		- (P_{1} \circ_{1} P_{1}) \circ_{1} m_{2}.
	\end{align*}
	Thus the map $\phi$ induces the isomorphism $\rmH_0(\NjAoperad_\infty, \partial) \cong \NjAoperad$.

To prove the map $\phi: \NjAoperad_\infty \twoheadrightarrow \NjAoperad$ is a quasi-isomorphism, we just need to prove that $\rmH_i(\NjAoperad_\infty, \partial)=0$ for all $i\geqslant 1$. This will be achieved by constructing a homotopy map. To this end, we need to establish an (arity-graded) monomial order on $\NjAoperad_{\infty}$. The main difference from \cite{WZ24} is that we need a different monomial order in order that the leading terms are the same as the case of Rota-Baxter associative operad.

By \cite[Chapter 3]{BD16}, each tree monomial $\mathcal{T}$ in $ \NjAoperad_{\infty} $ with $ n $ leaves can be represented by $\mathcal{T} = (X_{1}, X_{2}, \dots, X_{n})$, where $ X_{i} $ is the word formed by generators of $ \NjAoperad_{\infty} $ corresponding to the vertices along the unique path from the root of $ \mathcal{T} $ to its $i$-th leaf counted from left to right, for all $1 \leqslant i \leqslant n$.

Define a function $ \varphi $, called weight, on generators of $\NjAoperad_{\infty}$, as follows:
$$\begin{tikzpicture}[scale=0.5]
	\tikzstyle{every node}=[thick, minimum size=5pt, inner sep=1pt]
	\node(a) at (-4, 0.5){\large$\varphi$};
	\node[circle, fill=black, label=right:$m_n$] (b0) at (-2, -0.4) {};
	\node (b1) at (-3.5, 1.5) [minimum size=0pt]{}; 
	\node (b2) at (-2, 1.5) [minimum size=0pt]{};
	\node (b3) at (-0.5, 1.5) [minimum size=0pt]{}; 
	\draw (b0)--(b1);
	\draw (b0)--(b2);
	\draw (b0)--(b3);
	\draw [dotted, line width=1pt] (-3, 1)--(-2.2, 1);
	\draw [dotted, line width=1pt] (-1.8, 1)--(-1, 1);
	\node(0) at (3,0.5){$:=n-1(n\geqslant2)$,};

	\node(vroot) at (-2, -1.4){};
	\draw(b0)--(vroot);
\end{tikzpicture}
\quad
\begin{tikzpicture}[scale=0.5]
	\tikzstyle{every node}=[thick, minimum size=5pt, inner sep=1pt]
	\node(a) at (-4, 0.5){\large$\varphi$};
	\node[circle, draw, label=right:$P_n$] (b0) at (-2, -0.4) {};
	\node (b1) at (-3.5, 1.5) [minimum size=0pt]{}; 
	\node (b2) at (-2, 1.5) [minimum size=0pt]{};
	\node (b3) at (-0.5, 1.5) [minimum size=0pt]{}; 
	\draw (b0)--(b1);
	\draw (b0)--(b2);
	\draw (b0)--(b3);
	\draw [dotted, line width=1pt] (-3, 1)--(-2.2, 1);
	\draw [dotted, line width=1pt] (-1.8, 1)--(-1, 1);
	\node(0) at (3,0.5){{$:=2n-1(n\geqslant1)$.}};

	\node(vroot) at (-2, -1.4){};
	\draw(b0)--(vroot);
\end{tikzpicture}
$$

Now, we establish a monomial order $ \Xi $ as we need. For two tree monomials $\mathcal{T} = (X_1, \dots, X_n)$, $\mathcal{T}' = (Y_1, \dots, Y_m)$, we compare $\mathcal{T}, \mathcal{T}'$ in the following way:
\begin{enumerate}
	\item Compare arity. If $n > m$, then $\mathcal{T}>\mathcal{T}'$; 
	\item Compare the first path. If $n = m$, we compare $\mathcal{T}, \mathcal{T}'$ as follows: 
	Let $ X_{1}=x_{1} x_{2} \cdots x_{r} $ and $ Y_{1}= y_{1} y_{2} \cdots y_{s} $ be the paths from the root of $ \mathcal{T} $ and $ \mathcal{T}' $, respectively, to the first leaf.
	\begin{enumerate}
		\item Compare $ \varphi(X_{1}):= \varphi(x_{1}) + \cdots + \varphi(x_{r}) $ and $ \varphi(Y_{1}):= \varphi(y_{1}) + \cdots + \varphi(y_{s}) $, that is, if $ \varphi(X_{1}) > \varphi(Y_{1}) $, then $ \mathcal{T} > \mathcal{T}' $;
		\item If $ \varphi(X_{1}) = \varphi(Y_{1}) $, then compare the following words:
		\begin{eqnarray*}
			\underbrace{x_{1} \cdots x_{1}}_{\varphi(x_{1})} \underbrace{x_{2} \cdots x_{2}}_{\varphi(x_{2})} \cdots \underbrace{x_{r} \cdots x_{r}}_{\varphi(x_{r})}
			\quad \mathrm{and} \quad
			\underbrace{y_{1} \cdots y_{1}}_{\varphi(y_{1})} \underbrace{y_{2} \cdots y_{2}}_{\varphi(y_{2})} \cdots \underbrace{y_{s} \cdots y_{s}}_{\varphi(y_{s})},
		\end{eqnarray*}
		that is if the former is greater than the latter with respect to the lexicographic order induced by the following total order:
		$$ P_{1} < m_{2} < P_{2} < m_{3} < \cdots < P_{n} < m_{n+1} < P_{n+1} < \cdots, $$
		then $ \mathcal{T} > \mathcal{T}' $.
	\end{enumerate}
	\item Compare the other paths. If we can't compare $ \mathcal{T} $ and $ \mathcal{T}' $ using paths ending at first leaves, then repeat the process, outlined in step (ii), for their leaves $ 2, 3, \dots $, in turn until the size is distinguished.
\end{enumerate}

It is ready to see that this is indeed a monomial order.
Under this order, the leading term in the expansion of $\partial(m_n)$, $\partial(P_n)$ are the following tree monomials respectively:
$$\begin{tikzpicture}
		\tikzstyle{every node}=[thick, minimum size=4pt, inner sep=1pt]
		\node(1) at (0, 0) [draw, circle, fill=black, label=right:$\ m_{n-1}$]{};
		\node(2-1) at (-1, 1) [draw, circle, fill=black, label=right: $\ m_2$]{};
		\node(3-1) at (-2, 2){};
		\node(3-2) at (0, 2){};
		\node(2-2) at (0, 1){};
		\node(2-3) at (1, 1){};
		\draw (1)--(2-1);
		\draw (1)--(2-2);
		\draw (1)--(2-3);
		\draw (2-1)--(3-1);
		\draw (2-1)--(3-2);
		\draw [dotted, line width=1pt](-0.4, 0.5)--(0.4, 0.5);

		\node(vroot) at (0,-0.4){};
		\draw(1)--(vroot);
	\end{tikzpicture}
	\hspace{8mm}
	\begin{tikzpicture}[scale=0.9]
		\tikzstyle{every node}=[thick, minimum size=4pt, inner sep=1pt]
		\node(1) at (0, 0) [draw, circle, label=right:$\ P_{n-1}$]{};
		\node(2-1) at (-1, 1) [draw, circle, fill=black, label=right: $\ m_2$]{};
		\node(3-1) at (-2, 2)[draw, circle, label=right: $P_1$]{};
		\node(3-2) at (0, 2){};
		\node(2-2) at (0, 1){};
		\node(2-3) at (1, 1){};
		\node(4) at (-3, 3){};
		\draw (1)--(2-1);
		\draw (1)--(2-2);
		\draw (1)--(2-3);
		\draw (2-1)--(3-1);
		\draw (2-1)--(3-2);
		\draw (3-1)--(4);
		\draw [dotted, line width=1pt](-0.4, 0.5)--(0.4, 0.5);

		\node(vroot) at (0,-0.4){};
		\draw(1)--(vroot);
	\end{tikzpicture}
$$

Once the leading terms are seen to be the same as the case of Rota-Baxter associative operad, the remaining part of the proof carries verbatim as that of \cite[Theorem 3.5]{WZ24}.

\end{proof}

\subsection{Homotopy Nijenhuis associative algebras}\

We can now introduce the notion of homotopy Nijenhuis associative algebras.

\begin{defn} \label{defn. homotopy Nijenhuis associative algebra via dg operad}
	Let $(V,d_V)$ be a complex. Then a \textit{homotopy Nijenhuis associative algebra} (or a \textit{$\NjAoperad_\infty$-algebra}) on $V$ is defined to be a morphism of dg operads from $\NjAoperad_\infty$ to the endomorphism dg operad $\End_V$.
\end{defn}

Let $(V, d_V)$ be an algebra over the dg operad $\NjAoperad_\infty$.
Still denote by
$ m_n: V^{\ot n}\rightarrow V , n\geqslant 2$ (resp. $ P_n: V^{\ot n}\rightarrow V, n\geqslant 1$) the image of $m_n\in \NjAoperad_\infty$ (resp. $P_n\in \NjAoperad_\infty$).
We also rewrite $m_1 := d_V$.
For each $n\geqslant 1$, Equations~\eqref{Eq. homotopy NjA eq1}-\eqref{Eq. homotopy NjA eq2} are respectively equivalent to:
\begin{eqnarray}\label{Eq: stasheff-id NjA}
	\sum_{ i+j+k= n, \atop
		i, k\geqslant 0, j\geqslant 1 } (-1)^{i+jk}m_{i+1+k}\circ (\id^{\ot i}\ot m_j\ot \id^{\ot k} )=0
\end{eqnarray}
and
\begin{equation} \label{Eq: homotopy NjA-operator-version-2 NjA}
	\begin{aligned}
		\sum_{ r_1+\cdots+r_p =n
			\atop
			r_1, \dots, r_p \geqslant 1, p \geqslant 1 }
		\sum_{ t=0}^{p}
		\sum_{\substack{i_{1}+k_{1}+1=r_{1} \\ \cdots \\ i_{t}+k_{t}+1=r_{t} \\ i_{1}, \dots, i_{t} \geqslant 0 \\ k_{1}, \dots, k_{t} \geqslant 0 }}
		\sum_{\substack{h_{t}+ \cdots + h_{p} =t \\ h_{t}, \dots, h_{p} \geqslant 0}}
		(-1)^{\alpha} &\\
		P_{r_{1}} \circ \bigg( \id^{\ot i_{1}} \ot P_{r_{2}} \circ \Big( \id^{\ot i_{2}} \ot \cdots \ot P_{r_{t}} \circ \big(\id^{\ot i_{t}} \ot & \\
		m_{p} \circ (\id^{h_{t}} \ot P_{r_{t+1}} \ot \id^{\ot h_{t+1}} \ot P_{r_{t+2}} \ot \cdots \ot P_{r_{p-1}} \ot &\ \id^{h_{p-1}} \ot P_{r_{p}} \ot \id^{\ot h_{p}}) \\
		\ot &\ \id^{k_{t}}\big) \cdots \ot \id^{\ot k_{2}}\Big)\ot \id^{\ot k_{1}}\bigg) =0,
	\end{aligned}
\end{equation}
where
\begin{eqnarray*}
	\alpha
	= \sum^{t}_{q=1} \Big(r_{q} + k_{q} + \big(\sum_{s=q+1}^{p} r_{s}\big)k_{q} - qk_{q}\Big)
	+ \sum_{i=t+1}^{p}\big(\sum_{j=t}^{i-1} h_{j} + i - t -p \big) (r_{i}-1).
\end{eqnarray*}

Equation~(\ref{Eq: stasheff-id NjA}) is exactly the Stasheff identity in the definition of $A_\infty$-algebras \cite{Sta63}. In particular, the operator $m_1$ is a differential on $V$, and the operator $m_2$ is associative up to homotopy, in other words, $m_2$ induces an associative algebra structure on the homology $\rmH_\bullet(V, m_1)$.

For $n=1, 2$, Equation~(\ref{Eq: homotopy NjA-operator-version-2 NjA}) gives
\begin{eqnarray} \label{operator-differential NjA}
	m_1\circ P_1=P_1\circ m_1
\end{eqnarray}
and
\begin{align} \label{rbo-homotopy NjA}
	m_2\circ(P_1\ot P_1) & -P_1\circ m_2\circ (\id\ot P_1)-P_1\circ m_2\circ (P_1\ot \id) + P_1\circ P_1\circ m_2 \\
	\notag =& - (m_1\circ P_2+P_2\circ (\id\ot m_1)+P_2\circ(m_1\ot \id)).
\end{align}
Equation~(\ref{operator-differential NjA}) implies that $P_1: (V, m_1)\rightarrow (V, m_1)$ is a chain map, thus $P_1$ is well-defined on $\rmH_\bullet(V, m_1)$; Equation~(\ref{rbo-homotopy NjA}) indicates that $P_1$ is a Nijenhuis operator with respect to $m_2$ up to homotopy, whose obstruction is exactly the operator $P_2$. As a consequence, $(\rmH_\bullet(V, m_1), m_2, P_1)$ is a Nijenhuis associative algebra.

\medskip

\medskip

\section{From the minimal model to the $L_\infty$-algebra structure on deformation complex} \label{Section: From minimal model to Linifnity algebras NjA}



In this section, we will use the minimal model $\NjAoperad_\infty$, or more precisely the Koszul dual homotopy cooperad ${\NjAoperad^{\ac}}$, to determine the deformation complex as well as the $L_\infty$-algebra structure on it for Nijenhuis associative algebras. 

\subsection{The $L_\infty$-algebra structure on deformation complex}\

Given a homotopy cooperad $\calc$ and a dg operad $\calp$, one can construct a homotopy operad $\mathbf{Hom}(\calc, \calp)$, known as the convolution homotopy operad. This construction naturally induces an $L_\infty$-algebra $\mathbf{Hom}(\calc, \calp)^{\prod}$. For precise definitions and details of the construction, refer to Proposition~3.5 and Proposition-Definition~3.8 in \cite{CGWZ24}. Hence, we obtain the following $L_\infty$-algebra:
\begin{defn}
	Let $V$ be a graded space. Introduce an $L_\infty$-algebra associated to $V$ as $\frakC_{\NjA}(V):=\mathbf{Hom}({\NjAoperad^\ac}, \End_V)^{\prod}$.
\end{defn}

Now, let's determine the $L_\infty$-algebra $\frakC_{\NjA}(V)$ explicitly. The sign rules in the homotopy cooperad ${\NjAoperad^\ac}$ are complicated, so we need some transformations. Notice that there is a natural isomorphism of graded operads $\mathbf{Hom}(\cals, \End_{sV})\cong \End_V$.
Explicitly, any element $h \in \End_V(n)$ corresponds to an element $\hbar \in\mathbf{Hom}(\cals, \End_{sV})(n)$ which is defined as
$$\big(\hbar(\delta_n)\big)(sv_1\ot\cdots\ot sv_n) := (-1)^{\sum\limits_{k=1}^{n-1}\sum\limits_{j=1}^k|v_j|}(-1)^{(n-1)|h|}sh(v_1\ot \cdots \ot v_n)$$
for any homogeneous elements $v_1, \dots, v_n\in V$.

Thus, we have the following isomorphisms of homotopy operads:
\begin{align*}
	\mathbf{Hom}\big({\NjAoperad^\ac}, \End_V\big)
	\cong&\ \mathbf{Hom}\big({\NjAoperad^\ac}, \mathbf{Hom}(\cals, \End_{sV})\big)\\
	\cong&\ \mathbf{Hom}\big({\NjAoperad^\ac}\ot_{\mathrm{H}}\cals, \End_{sV}\big)\\
	=&\ \mathbf{Hom}\big({\mathscr{S}({\NjAoperad^\ac})}, \End_{sV}\big).
\end{align*}
We obtain $$\frakC_{\NjA}(V)\cong \mathbf{Hom}\big({\mathscr{S}({\NjAoperad^\ac})}, \End_{sV}\big)^{\prod}.$$ Recall that ${\mathscr{S}({\NjAoperad^\ac})}(n)=\bfk u_n\oplus \bfk v_n$ with $|u_n|=0$ and $|v_n|=1$. By definition $$\mathbf{Hom}\big({\mathscr{S}({\NjAoperad^\ac})}, \mathrm{End}_{sV}\big)(n)=\Hom(\bfk u_n\oplus \bfk v_n, \Hom((sV)^{\ot n}, sV)).$$
Each $f\in \Hom((sV)^{\ot n}, sV)$ and $g\in \Hom((sV)^{\ot n}, V)$ determine bijectively a map, respectively,
$\widetilde{f} \in \Hom(\bfk u_n, \Hom((sV)^{\ot n}, sV)) $ and $ \widehat{g} \in \Hom(\bfk v_n, \Hom((sV)^{\ot n}, sV))$
such that $\widetilde{f}(u_n)=f$ and $\widehat{g}(v_n)=(-1)^{|g|}sg$.

Denote
$$\frakC_{\Alg}(V) = \Hom(\overline{T^c} (sV), sV) \quad \mathrm{and} \quad \frakC_{\NjO}(V) = \Hom(\overline{T^c} (sV), V),$$
where
$$\overline{T^c} (sV) = sV\oplus (sV)^{\ot 2}\oplus \cdots \oplus (sV)^{\ot n} \oplus \cdots $$
is the reduced cofree conilpotent tensor coalgebra generated by $sV$, with the usual deconcatenation coproduct.
In this way, we identify $\frakC_{\NjA}(V)$ with $\frakC_{\Alg}(V)\oplus \frakC_{\NjO}(V)$.

Notice that for a Nijenhuis associative algebra $A=(A, m, P)$, considered as a graded space concentrated in degree zero, $\frakC_{\NjA}(A)$ is just the underlying space of the cochain complex of Nijenhuis associative algebra $A$ up to shift.

A direct computation gives the $L_\infty$-algebra structure $\{l_n\}_{n\geqslant 1}$ on $\frakC_{\NjA}(V)$ as follows:
\begin{enumerate}
	
	\item For homogeneous elements $sf, sh\in \frakC_{\Alg}(V)$, define $$l_2(sf\ot sh):= [sf, sh]_{\G}\in\frakC_{\Alg}(V), $$ where the operation $[-, -]_{\G}$ is the Gerstenhaber bracket (see for instance, \cite{Ger63} or \cite[Page 9, (3)]{CGWZ24}).
	
	\item \label{Item L infty l-n}
	Let $n\geqslant 1$. For homogeneous elements $sh\in \Hom((sV)^{\ot n}, sV)\subset \frakC_{\Alg}(V)$ and $g_1, \dots, g_n\in \frakC_{\NjO}(V)$, 	define $$l_{n+1}(sh\ot g_1\ot \cdots \ot g_n)\in \frakC_{\NjO}(V)$$ as :
	\begin{align*}
		l_{n+1}&(sh\ot g_1\ot \cdots \ot g_n) :=\\
		& \sum_{\sigma\in \mathbb{S}_n}(-1)^{\eta} \Big(\sum_{k=0}^n (-1)^{\xi} s^{-1} \circ (sg_{\sigma (1)})
		\{ \cdots
		\{
		sg_{\sigma(k)}
		\{sh
		\{sg_{\sigma(k+1)}, \dots, sg_{\sigma(n)}\}
		\}
		\}
		\cdots \}
		\Big),
	\end{align*}
	where $(-1)^{\eta}=\chi(\sigma; g_1, \dots, g_n) (-1)^{ n(|h|+1)+ \sum\limits_{k=1}^{n-1} \sum\limits_{j=1}^{k} |g_{\sigma(j)}|} $, $ (-1)^{\xi} = (-1)^{(|h|+1) ( \sum\limits_{i=1}^{k}(|g_{\sigma(i)}|+1))+k}$
	and the operation $ -\{-\} $ is the brace operation introduced in \cite{GV95} (see also \cite[Definition 3.6]{CGWZ24}).
		
	\item Let $n\geqslant 1$. For homogeneous elements $sh\in \Hom((sV)^{\ot n}, sV)\subset \frakC_{\Alg}(V)$ and $g_1, \dots, g_n\in \frakC_{\NjO}(V)$, for $1\leqslant k\leqslant n$, define $$l_{n+1}(g_1\ot \cdots\ot g_k\ot sh \ot g_{k+1}\ot \cdots\ot g_n)\in \frakC_{\NjO}(V)$$ to be
	$$l_{n+1}(g_1\ot \cdots\ot g_k\ot sh \ot g_{k+1}\ot \cdots\ot g_n):=(-1)^{(|h|+1)(\sum\limits_{j=1}^k|g_j|)+k}l_{n+1}(sh\ot g_1\ot \cdots \ot g_n), $$
	where the RHS has been introduced in \eqref{Item L infty l-n}.
		
	\item All other components of operators $\{l_n\}_{n \geqslant 1}$ vanish.
\end{enumerate}



For later use, with this $L_\infty$-algebra, we can use the Maurer-Cartan method to redefine homotopy Nijenhuis associative algebras, which is equivalent to Definition~\ref{defn. homotopy Nijenhuis associative algebra via dg operad} according to \cite[Proposition 3.9]{CGWZ24}, as follows:

\begin{defn} \label{Def: homotopy NjA algebras}
	Let $V$ be a graded space. A \textit{homotopy Nijenhuis associative algebra structure} (or a $\NjAoperad_\infty$-algebra structure) on $V$ is defined to be a Maurer-Cartan element in the $L_\infty$-algebra ${\frakC_{\NjA}}(V)$.
\end{defn}

Let's make the definition explicit. Given an element $\alpha=(\{b_i\}_{i\geqslant 1}, \{R_i\}_{i\geqslant 1}) \in \frakC_{\NjA}(V)_{-1}$
with $b_i:(sV)^{\ot i}\rightarrow sV$ and $R_i:(sV)^{\ot i}\rightarrow V$, then $\alpha$ satisfies the Maurer-Cartan equation if and only if for each $n\geqslant 1$, the following equations hold:
\begin{eqnarray} \label{Eq: A-infinity NjA}
	\sum_{i=1}^{n} b_{n-i+1}\{b_{i}\}
	=0,
\end{eqnarray}
\begin{eqnarray} \label{Eq: homotopy-NjA-operator NjA}
	\sum_{\substack{ r_1+\cdots+r_p =n \\
		r_1, \dots, r_p \geqslant 1 \\
		1 \leqslant p \leqslant n }} \sum_{ t=0}^{p} (-1)^{t}
	(sR_{r_{1}})
	\{ \cdots
	\{
	sR_{r_{t}}
	\{b_{p}
	\{sR_{r_{t+1}}, \dots, sR_{r_{p}}\}
	\}
	\}
	\cdots \} = 0.
\end{eqnarray}

\begin{remark} \label{Rem. NiA[1]-algebra}
	We also say that $(sV, \{b_i\}_{i\geqslant 1}, \{R_i\}_{i\geqslant 1})$ forms a \textit{$\NjAoperad_\infty[1]$-algebra}, that is, a shifted homotopy Nijenhuis associative algebra.
\end{remark}

For later use, we need to fix two isomorphisms:
\begin{eqnarray} \label{Eq: first can isom}
	\Hom((sV)^{\ot n}, sV) \simeq \Hom(V^{\ot n}, V), \quad f \mapsto \widetilde{f}:= s^{-1} \circ f \circ s^{\ot n}
\end{eqnarray}
for $f \in \Hom((sV)^{\ot n}, sV)$, and
\begin{eqnarray} \label{Eq: second can isom}
	\Hom((sV)^{\ot n}, V) \simeq \Hom(V^{\ot n}, V), \quad g \mapsto \widehat{g}:= g \circ s^{\ot n}
\end{eqnarray}
for $g \in \Hom((sV)^{\ot n}, V)$.
These two fixed isomorphisms give the equivalences between Equation~\eqref{Eq: A-infinity NjA} and Equation~\eqref{Eq: stasheff-id NjA}, and between Equation~\eqref{Eq: homotopy-NjA-operator NjA} and Equation~\eqref{Eq: homotopy NjA-operator-version-2 NjA}, when we denote
$$ m_n = \widetilde{b_{n}} = s^{-1}\circ b_n\circ s^{\ot n}: V^{\ot n}\rightarrow V
\quad \mathrm{and} \quad
P_n = \widehat{R_{n}} = R_n\circ s^{\ot n}: V^{\ot n}\rightarrow V.$$



\smallskip

\subsection{Realising Nijenhuis associative algebra structures as Maurer-Cartan elements}\

In this subsection, we will see that the cohomology of Nijenhuis associative algebras introduced in Subsection \ref{Subsec: cohomology NjA} can be obtained using the twisting procedure in the $L_\infty$-algebra introduced in previous subsection.

The following result follows from \cite[Proposition 3.9]{CGWZ24}.


\begin{prop}\label{Prop: NjA is MC element}
	Let $V$ be an ungraded space considered as a graded space concentrated in degree 0. Then a Nijenhuis associative algebra structure on $V$ is equivalent to a Maurer-Cartan element in the $L_\infty$-algebra ${\frakC_{\NjA}}(V)$.
\end{prop}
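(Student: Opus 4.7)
The plan is to invoke the general Maurer-Cartan correspondence \cite[Proposition 3.9]{CGWZ24}: for any coaugmented homotopy cooperad $\calc$ and dg operad $\calp$, Maurer-Cartan elements of $\mathbf{Hom}(\calc,\calp)^{\prod}$ are in bijection with morphisms of dg operads $\Omega(\calc)\to\calp$. Instantiating with $\calc=\NjAoperad^{\ac}$ and $\calp=\End_{V}$, and recalling that $\Omega(\NjAoperad^{\ac})=\NjAoperad_{\infty}$ by Definition~\ref{defn: cobar minimal model NjA}, MC elements of $\frakC_{\NjA}(V)$ correspond bijectively to $\NjAoperad_{\infty}$-algebra structures on $V$. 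Hence the task reduces to showing that, when $V$ is concentrated in degree $0$, a $\NjAoperad_{\infty}$-algebra structure on $V$ is the same datum as a Nijenhuis associative algebra structure on $V$.

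The next step is a degree count. If $V$ sits in cohomological degree $0$, then $sV$ is concentrated in degree $-1$, so $(sV)^{\ot n}$ is concentrated in degree $-n$. Consequently, a nonzero homogeneous map $b_{n}\in\Hom((sV)^{\ot n},sV)$ must shift degree by $n-1$, and similarly a nonzero $R_{n}\in\Hom((sV)^{\ot n},V)$ must shift by $n$. Combining this with the internal grading of $\mathscr{S}(\NjAoperad^{\ac})$ (where $|u_{n}|=0$ and $|v_{n}|=1$) and the requirement that $\alpha=(\{b_{i}\},\{R_{i}\})$ lie in $\frakC_{\NjA}(V)_{-1}$, one checks that only $b_{2}$ and $R_{1}$ can be nonzero; equivalently, via the isomorphisms \eqref{Eq: first can isom}--\eqref{Eq: second can isom}, $m_{n}=0$ for $n\neq 2$ and $P_{n}=0$ for $n\neq 1$. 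Writing $m:=m_{2}$ and $P:=P_{1}$, these are genuine linear maps $V^{\ot 2}\to V$ and $V\to V$.

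It then remains to substitute these surviving components into the Maurer-Cartan equations \eqref{Eq: A-infinity NjA} and \eqref{Eq: homotopy-NjA-operator NjA}. Equivalently (via the isomorphisms), one substitutes into \eqref{Eq: stasheff-id NjA} and \eqref{Eq: homotopy NjA-operator-version-2 NjA}. Equation~\eqref{Eq: stasheff-id NjA} is identically zero unless $n=3$, in which case it collapses to
\[ m\circ(m\ot\id)-m\circ(\id\ot m)=0, \]
i.e.\ the associativity of $m$. Equation~\eqref{Eq: homotopy NjA-operator-version-2 NjA} is identically zero unless $n=2$, where only the terms with $(p,r_{1},\dots,r_{p})=(2,1,1)$ and $t\in\{0,1,2\}$ survive, yielding
\[ m\circ(P\ot P)-P\circ m\circ(\id\ot P)-P\circ m\circ(P\ot\id)+P\circ P\circ m=0, \]
which is exactly \eqref{Eq: Nijenhuis relation in terms of maps}. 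The correspondence $(m,P)\leftrightarrow(\{b_{i}\},\{R_{i}\})$ is manifestly a bijection, completing the proof.

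The main obstacle is the bookkeeping in the last step: the sign $\alpha$ in \eqref{Eq: homotopy NjA-operator-version-2 NjA} must be tracked across the three surviving terms, and the isomorphisms $\widetilde{(\cdot)},\widehat{(\cdot)}$ introduce further Koszul signs under desuspension. Provided these match up as expected (which is guaranteed by the construction of $\NjAoperad^{\ac}$ being engineered precisely so that $\Omega(\NjAoperad^{\ac})\twoheadrightarrow\NjAoperad$ identifies the degree-zero generators with $m_{2}$ and $P_{1}$), the verification is routine.
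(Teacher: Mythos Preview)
Your proposal is correct and follows the same approach as the paper, which simply states that the result ``follows from \cite[Proposition 3.9]{CGWZ24}'' without further elaboration; you have spelled out the degree count and the reduction of the Maurer-Cartan equations that the paper leaves implicit. One small slip: the paper works homologically, so $sV$ sits in degree $+1$ (not $-1$), whence $\Hom((sV)^{\ot n},sV)$ is concentrated in degree $1-n$ and $\Hom((sV)^{\ot n},V)$ in degree $-n$; this is what forces $n=2$ and $n=1$ respectively for the degree $-1$ component, and your conclusion is unaffected.
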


Let us justify the cohomology theory of Nijenhuis associative algebras introduced in Section~\ref{Sect: Cohomology theory of Nijenhuis associative algebras}.

\begin{prop} \label{Prop: cohomology complex as the underlying complex of L-infinity algebra NjA}
	Let $A = (A, m, P)$ be a Nijenhuis associative algebra. Twist the $L_\infty$-algebra $\frakC_{\NjA}(A)$ by the Maurer-Cartan element corresponding to the Nijenhuis associative algebra structure, then its underlying complex is isomorphic to $s\C^\bullet_{\NjA}(A)$, the shift of the cochain complex of Nijenhuis associative algebra $A$ defined in Subsection \ref{Subsec: cohomology NjA}.
\end{prop}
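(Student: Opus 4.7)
The plan is to identify the twisted differential $l_1^\alpha$ on $\frakC_{\NjA}(A)$ with the shifted Nijenhuis differential $\delta_{\NjA}^\bullet$ on $\C_{\NjA}^\bullet(A)$ via the canonical isomorphisms \eqref{Eq: first can isom}--\eqref{Eq: second can isom}. By Proposition~\ref{Prop: NjA is MC element}, the Nijenhuis structure $(A,m,P)$ corresponds to the Maurer-Cartan element $\alpha=(b_2,R_1)$, where $b_2\leftrightarrow m$ and $R_1\leftrightarrow P$. The general twist formula reads $l_1^\alpha(x)=\sum_{k\geqslant 0}\frac{1}{k!}\,l_{k+1}(\alpha^{\otimes k},x)$, and expanding $\alpha=b_2+R_1$ by multilinearity selects, at each level, a unique distribution of $b_2$ and $R_1$ over the slots, because the only non-vanishing brackets on $\frakC_{\NjA}(A)$ are the Gerstenhaber bracket on $\frakC_{\Alg}$ and the cross-brackets $l_{n+1}\colon \frakC_{\Alg}[\text{arity }n]\otimes \frakC_{\NjO}^{\otimes n}\to\frakC_{\NjO}$.

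For $F\in\frakC_{\Alg}(A)$ of arity $n$, the $\frakC_{\Alg}$-component of $l_1^\alpha(F)$ is supplied solely by $l_2(b_2,F)=[b_2,F]_{\G}$; under \eqref{Eq: first can isom} this recovers the shifted Hochschild differential $\delta_{\Alg}(\widetilde{F})$. The $\frakC_{\NjO}$-component of $l_1^\alpha(F)$ comes from $\frac{1}{n!}l_{n+1}(F,R_1^{\otimes n})$: using the symmetry of $l_{n+1}$ and the fact that all $g_i=R_1$ are identical, the sum over $\sigma\in S_n$ yields $n!$ copies cancelling the prefactor; since $R_1$ has arity $1$ each brace $R_1\{-\}$ reduces to a composition with $P$, and the sum over the inner index $k$ exactly reproduces the defining formula of $\Phi_M^n(\widetilde{F})$, up to the overall minus sign dictated by the mapping-cone convention.

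For $G\in\frakC_{\NjO}(A)$ of arity $n-1$, the twist contribution lies entirely in $\frakC_{\NjO}$ and comes from $\frac{1}{(n-1)!}l_{n+1}(b_2,R_1^{\otimes(n-2)},G)$ together with the sum over the position of $G$ among the $\frakC_{\NjO}$-inputs (rule (iii)). Because $b_2$ has arity $2$, the inner index $k$ in formula~(ii) is forced to lie in $\{n-2,n-1,n\}$, producing three families of terms. The families with $k=n-2$ and $k=n-1$ insert arguments into the two slots of $b_2$; after collecting these (together with the single-argument braces $R_1\{-\}$) one recognises the product $\cdot_P$ on $A_P$ together with the left and right actions $\rhd,\lhd$ on $M_P=A_P$, reassembling to the Hochschild differential $\partial^{n-1}$ of $A_P$ with coefficients in $M_P$. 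The family with $k=n$ wraps $b_2$ inside $R_1\{R_1\{\cdots\}\}$, producing precisely the outer correction $-P\circ\delta_{\Alg}^{n-1}$ in \eqref{Eq: diff of Nijenhuis operator}. Summing these families gives $-\delta_{\NjO}^{n-1}(\widetilde{G})$.

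Combining these three computations yields $l_1^\alpha(F,G)=(\delta_{\Alg}(\widetilde{F}),-\Phi(\widetilde{F})-\delta_{\NjO}(\widetilde{G}))$ under the fixed isomorphisms \eqref{Eq: first can isom}--\eqref{Eq: second can isom}, which is precisely the mapping-cone differential $\delta_{\NjA}$ of Definition~\ref{Def: definition of Nijenhuis cohomology} after shifting by $s$. The main obstacle is the sign and index bookkeeping in the last paragraph: reconciling the Koszul signs $(-1)^\eta,(-1)^\xi$ of the $L_\infty$-brackets with the brace signs, and verifying that the three families indexed by $k\in\{n-2,n-1,n\}$ regroup cleanly into the two pieces $\partial^{n-1}$ and $-P\circ\delta_{\Alg}^{n-1}$. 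Once signs are aligned, the computation closes naturally because $R_1$ has arity one and $b_2$ has arity two, which rigidly constrains all brace expansions.
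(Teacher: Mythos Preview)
Your treatment of the $\frakC_{\Alg}$-component $F$ is essentially correct and matches the paper's argument. However, your computation for $G\in\frakC_{\NjO}(A)$ contains a genuine error. You claim that the twist contribution comes from $\frac{1}{(n-1)!}\,l_{n+1}(b_2,R_1^{\otimes(n-2)},G)$, but this bracket \emph{vanishes} unless the number of $\frakC_{\NjO}$-inputs equals $2$. Read rule~(ii) again: the bracket $l_{m+1}(sh,g_1,\ldots,g_m)$ is only defined (hence nonzero) when the $\frakC_{\Alg}$-input $sh$ lies in $\Hom((sV)^{\otimes m},sV)$, i.e.\ has arity exactly $m$. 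Since $b_2$ has arity $2$, the only surviving bracket with $b_2$ in the $\frakC_{\Alg}$-slot is $l_3$, irrespective of the arity of $G$. Thus $l_1^\alpha(G)$ reduces to $-\,l_3(b_2\otimes R_1\otimes G)$ (up to the twist sign), with no higher contributions involving extra copies of $R_1$.

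This asymmetry with the $F$-case is the point you missed: for $F$, the $\frakC_{\Alg}$-input is $F$ itself, whose arbitrary arity $n$ forces the use of $l_{n+1}(F,R_1^{\otimes n})$; for $G$, the $\frakC_{\Alg}$-input has to come from the Maurer--Cartan element and is therefore $b_2$ of fixed arity $2$. Once you reduce to $l_3(b_2,R_1,G)$, the computation is a short six-term brace expansion (exactly as the paper does) which assembles directly into $-P\circ\delta_{\Alg}^{\,n-1}(\widehat{G})+\partial^{\,n-1}(\widehat{G})=\delta_{\NjO}^{\,n-1}(\widehat{G})$ up to the global shift sign. Your narrative of ``three families indexed by $k\in\{n-2,n-1,n\}$'' with many $R_1$'s does not reflect the actual structure of the $L_\infty$-brackets and should be replaced by this direct $l_3$ expansion.
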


\begin{proof}
	By Proposition~\ref{Prop: NjA is MC element}, the Nijenhuis associative algebra structure on $A$ is equivalent to a Maurer-Cartan element $\alpha=(\nu, \tau)$ in the $L_\infty$-algebra $\frakC_{\NjA}(A)$ with
	$$\nu=-s\circ m \circ (s^{-1})^{\ot 2}: (sA)^{\ot 2}\rightarrow sA \quad \mathrm{and} \quad \tau=P\circ s^{-1}: sA\rightarrow A.$$
	Via twisting procedure (see \cite[Proposition 2.4]{CGWZ24}), this Maurer-Cartan element induces a new $L_\infty$-algebra structure $\{l_n^\alpha\}_{n\geqslant 1}$ on the graded space $\frakC_{\NjA}(A)$.
	More precisely, for any $(sf, g)\in\Hom((sA)^{\ot n}, sA) \oplus \Hom((sA)^{\ot n-1}, A) \subset \frakC_{\NjA}(A)$
	, we have
	\begin{align*}
		l_1^\alpha(sf)
		&=l_{1}(sf) -l_2(\alpha \ot sf) + \sum_{i=2}^n (-1)^{\frac{i(i+1)}{2}} \frac{1}{i!} l_{i+1}(\alpha^{\ot i}\ot sf)\\
		&=-l_2(\nu \ot sf) -l_2(\tau \ot sf) + \sum_{i=2}^n (-1)^{\frac{i(i+1)}{2}} \frac{1}{i!} l_{i+1}(\tau^{\ot i}\ot sf)\\
		&=-[\nu, sf]_{\G} + (-1)^{\frac{n(n+1)}{2}} \frac{1}{n!} l_{n+1}(\tau^{\ot n}\ot sf).
	\end{align*}
	Note that, $-[\nu, sf]_{\G}$ corresponds to $-(-1)^{n+1}\delta_{\Alg}^n(\widetilde{sf})$ in $s\C^\bullet_\Alg(A)$
	under the fixed isomorphism \eqref{Eq: first can isom}.
	And
	\begin{align*}
	 (-1)^{\frac{n(n+1)}{2}} \frac{1}{n!} l_{n+1}(\tau^{\ot n}\ot sf)
		&= (-1)^{\frac{n(n+1)}{2}} (-1)^{n(|f|+1)+n} \frac{1}{n!} l_{n+1}( sf \ot \tau^{\ot n})\\
		&= \sum_{k=0}^{n} (-1)^{k} s^{-1} \underbrace{(s\tau) \circ \cdots \circ (s\tau)}_{k} \circ sf \{\underbrace{s\tau, \dots, s\tau}_{n-k}\},
	\end{align*}
	which corresponds to $\Phi^n(\widetilde{sf})$ under the fixed isomorphism \eqref{Eq: second can isom}.
We also have
	\begin{align*}
		l_1^{\alpha}(g)
		=&\ l_1(g) - l_2(\alpha \ot g) - \frac{1}{2!} l_3(\alpha \ot \alpha \ot g)\\
		=&\ -l_3(\nu \ot \tau\ot g)\\
		=&\ s^{-1}\nu \{s\tau, sg\} - \tau\{\nu\{sg\}\} + (-1)^{n} \tau \{sg\{\nu\}\} \\
		&+ s^{-1}\nu \{sg, s\tau\} - (-1)^{n}g\{\nu\{s\tau\}\} + (-1)^{n} g \{s\tau\{\nu\}\},
	\end{align*}
	which corresponds to $ (-1)^{n}\delta_{\NjO}^{n-1}(\widehat{g})$ under the fixed isomorphism \eqref{Eq: second can isom}.
	
	In conclusion, 	we have that the underlying complex of the twisted $L_\infty$-algebra $\frakC_{\NjA}(A)$ by Maurer-Cartan element $\alpha$ is isomorphic to the cochain complex $s\C^\bullet_{\NjA}(A)$.
\end{proof}

Although $\frakC_{\NjA}(A)$ is an $L_\infty$-algebra, the next proposition shows that once the associative algebra structure $m$ on $A$ is fixed, the graded space $\frakC_{\NjO}(A)$, which, after twisting procedure, controls deformations of Nijenhuis operators, is a genuine graded Lie algebra.

\begin{prop} \label{Prop: justifying Cohomology theory of Nijenhuis operator Ass}
	Let $(A, m)$ be an associative algebra. Then we have the following statements:
	\begin{enumerate}
		\item The graded space $\frakC_{\NjO}(A)$ can be endowed with a graded Lie algebra structure, and a Nijenhuis operator on $(A, m)$ is equivalent to a Maurer-Cartan element in this graded Lie algebra.
		
		\item Given a Nijenhuis operator $P$ on associative algebra $(A, m)$, the underlying complex of the twisted dg Lie algebra $\frakC_{\NjO}(A)$ by the corresponding Maurer-Cartan element is isomorphic to the cochain complex $\C_{\NjO}^\bullet(A)$ of Nijenhuis operator $P$ defined in Subsection~\ref{Subsect: cohomology NjA operator}.
	\end{enumerate}
\end{prop}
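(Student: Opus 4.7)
The plan is to deduce both statements from Proposition~\ref{Prop: NjA is MC element} and Proposition~\ref{Prop: cohomology complex as the underlying complex of L-infinity algebra NjA} by successively applying the Maurer-Cartan twisting construction to the $L_\infty$-algebra $\frakC_{\NjA}(A)=\frakC_{\Alg}(A)\oplus \frakC_{\NjO}(A)$ built in the previous subsection. The crucial observation is that the nonzero components of $\{l_n\}$ are highly restricted: apart from the Gerstenhaber bracket $l_2$ on $\frakC_{\Alg}(A)^{\otimes 2}$, one has, for each $n\geqslant 1$, a contribution $l_{n+1}$ with exactly one input $sh\in \Hom((sA)^{\otimes n},sA)$ \emph{of arity $n$} and the remaining $n$ inputs lying in $\frakC_{\NjO}(A)$.

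For (i), the associative product $m$ supplies a Maurer-Cartan element $\nu=-s\circ m\circ (s^{-1})^{\otimes 2}\in \frakC_{\Alg}(A)\subset \frakC_{\NjA}(A)$, and we twist $\frakC_{\NjA}(A)$ by $\nu$. Applied to $g_1,\dots,g_k\in \frakC_{\NjO}(A)$, the twisted operation equals, up to signs, the sum $\sum_{j\geqslant 0}\tfrac{1}{j!}\,l_{k+j}(\nu^{\otimes j},g_1,\dots,g_k)$, and the arity-matching restriction above forces $j=1$ together with $k=\mathrm{arity}(\nu)=2$. Thus the only surviving operation on $\frakC_{\NjO}(A)$ is $l_2^\nu(g_1,g_2)=\pm\, l_3(\nu,g_1,g_2)$, which lands in $\frakC_{\NjO}(A)$; the $L_\infty$-relations then collapse to the graded antisymmetry and Jacobi identity for $l_2^\nu$, exhibiting a graded Lie algebra structure. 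By the standard additivity of Maurer-Cartan elements under twisting, an element $\tau\in \frakC_{\NjO}(A)$ is Maurer-Cartan for $l_2^\nu$ if and only if $\nu+\tau$ is Maurer-Cartan in $\frakC_{\NjA}(A)$; invoking Proposition~\ref{Prop: NjA is MC element} with the $\frakC_{\Alg}$-part already fixed as $m$, this is precisely the condition that $\tau=P\circ s^{-1}$ encode a Nijenhuis operator on $(A,m)$.

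For (ii), given such a $\tau$, twisting the graded Lie algebra of (i) further by $\tau$ coincides, by compatibility of iterated twisting, with twisting $\frakC_{\NjA}(A)$ directly by $\alpha=\nu+\tau$ and restricting to $\frakC_{\NjO}(A)$. The resulting differential $l_1^\alpha|_{\frakC_{\NjO}(A)}$ is exactly the expression computed in the proof of Proposition~\ref{Prop: cohomology complex as the underlying complex of L-infinity algebra NjA}, which under the fixed isomorphism~\eqref{Eq: second can isom} corresponds to $(-1)^n\delta_{\NjO}^{n-1}$, yielding the desired isomorphism of cochain complexes. The main technical hurdle throughout is the bookkeeping of the vanishing claims after twisting together with the Koszul signs; once these are controlled, the proposition reduces entirely to results already in hand.
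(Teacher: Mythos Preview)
Your proposal is correct and follows essentially the same route as the paper: twist $\frakC_{\NjA}(A)$ first by $\nu=(\nu,0)$, use the arity constraint on the $l_{n+1}$ to see that only $l_2^\nu=l_3(\nu,-,-)$ survives on $\frakC_{\NjO}(A)$, and then twist again by $\tau$. The only difference is one of bookkeeping: the paper verifies the Maurer-Cartan equation for $\tau$ and the twisted differential $(l_1^\nu)^\tau$ by direct expansion, whereas you shortcut both via additivity of Maurer-Cartan elements under iterated twisting and the computation already carried out in the proof of Proposition~\ref{Prop: cohomology complex as the underlying complex of L-infinity algebra NjA}.
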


\begin{proof}
	(i) Consider $A$ as a graded space concentrated in degree 0. Define
	$$ \nu =- s\circ m \circ (s^{-1})^{\ot 2}: (sA)^{\ot 2}\rightarrow sA.$$
	Note that $\alpha=(\nu, 0)$ is naturally a Maurer-Cartan element in $L_\infty$-algebra $\frakC_{\NjA}(A)$.
	By the construction of $\{l_n\}_{n\geqslant 1}$ on $\frakC_{\NjA}(A)$, the graded subspace $\frakC_{\NjO}(A)$ is closed under the action of $\{l_n^\alpha\}_{n\geqslant 1}$. Note that $ l_1^\alpha \equiv 0 $ on $ \frakC_{\NjO}(A) $, and the restriction of $l_n^\alpha$ on $\frakC_{\NjO}(A)$ is $0$ for $n\geqslant 3$ because the arity of $\nu$ is 2. Thus $(\frakC_{\NjO}(A), \{l_2^\alpha\})$ forms a graded Lie algebra.
	More explicitly, for any $f\in \Hom((sA)^{\ot n}, A)$, $g\in \Hom((sA)^{\ot k}, A)$, we have
	\begin{align*}
		l_2^\alpha(f\ot g)
		=&\ l_{2}(f, g)+l_{3}(\alpha, f, g)\\
		=&\ l_{3}(\nu, f, g)\\
		=&\ (-1)^{|f|}\big(s^{-1}\nu\{sf, sg\}-(-1)^{|f|+1} f\{\nu \{sg\} \} + (-1)^{|f|+1+|g|+1}f\{sg\{\nu\}\}\big)\\
		&+(-1)^{|f||g|+1+|g|}\big(s^{-1}\nu\{sg, sf\}-(-1)^{|g|+1} g\{\nu \{sf\} \}+ (-1)^{|g|+1+|f|+1}g\{sf\{\nu\}\}\big)\\
		=&\ (-1)^{n}s^{-1}\nu\{sf, sg\} + f\{\nu \{sg\}\} + (-1)^{k}f\{sg\{\nu\}\}\\
		&+(-1)^{nk+k+1}s^{-1}\nu\{sg, sf\}-(-1)^{nk} g\{\nu \{sf\} \}+ (-1)^{nk+n+1}g\{sf\{\nu\}\}.
	\end{align*}
	
	Since $A$ is concentrated in degree 0, we have $\frakC_{\NjO}(A)_{-1}=\Hom(sA, A)$. Take an element $\tau \in \Hom(sA, A)$, then $\tau$ satisfies the Maurer-Cartan equation
	$$-\frac{1}{2}l_2^{\alpha}(\tau\ot \tau)=0$$
	if and only if
	$$s^{-1}\nu\{s\tau, s\tau\}-\tau\{\nu\{s\tau\}\}+\tau\{s\tau\{\nu\}\}=0.$$
	Define $P=\tau\circ s:A\rightarrow A$. The above equation can be rewritten as
	$$ m \circ (P \ot P) = P \circ m \circ (P \ot \id) + P \circ m \circ (\id \ot P) - P \circ P \circ m, $$
	which means that $P$ is a Nijenhuis operator on associative algebra $(A, m)$.

	(ii) Now let $P$ be a Nijenhuis operator on associative algebra $(A, m)$. By the first statement, $P$ corresponds to a Maurer-Cartan element $\beta = P \circ s^{-1}$ in the graded Lie algebra $(\frakC_{\NjA}(A), l_2^\alpha)$.
	For $f\in \Hom((sA)^{\ot n}, A) \subset \frakC_{\NjO}(A)$, we compute $(l_1^\alpha)^\beta(f)$ as follows:
	$$\begin{aligned}
		(l_1^\alpha)^\beta(f)
		=&\ l_1^\alpha(f)-l_2^\alpha(\beta\ot f)\\
		=&\ -l_2^\alpha(\beta\ot f)\\
		=&\ s^{-1}\nu\{s\beta, sf\} - \beta\{\nu\{sf\}\} - (-1)^{n} \beta \{sf\{\nu\}\}\\
		&+ s^{-1}\nu\{sf, s\beta\} +(-1)^{n}f\{\nu\{s\beta\}\} - (-1)^{n} f \{s\beta\{\nu\}\},
	\end{aligned}$$
	which corresponds to $ (-1)^{n+1}\delta_{\NjO}^{n}(\widehat{f})$ under the fixed isomorphism \eqref{Eq: second can isom}. So the underlying complex of the twisted dg Lie algebra $\frakC_{\NjO}(A)$ by the corresponding Maurer-Cartan element $\beta$ is isomorphic to the cochain complex $\C_{\NjO}^\bullet(A)$ of Nijenhuis operator $P$.	
\end{proof}

\medskip

\section{Homotopy relative Rota-Baxter associative algebras and homotopy Nijenhuis associative algebras} \label{Section: relation between HrelRBA and HNjA}

In this section, we will establish a connection between homotopy relative Rota-Baxter associative algebras of weight $0$ and homotopy Nijenhuis associative algebras. This generalizes the following result of Das \cite{Das2020}, which relates Rota-Baxter associative algebras of weight $0$ and Nijenhuis associative algebras, to the homotopy level:

\begin{prop} \cite[2.5 Proposition]{Das2020}
	Let $(A, m)$ be an associative algebra and $(M, l, r)$ be a bimodule over it, where $l: A \ot M \rightarrow M$ (resp. $r: M \ot A \rightarrow M$) denotes the left (resp. right) action of $A$ on $M$. A linear map $P: M \rightarrow A$ is a \textit{relative Rota-Baxter operator of weight $0$} on $A$ with respect to $M$, i.e.,
	$$ m \circ (P \ot P) = P \circ ( l \circ (P \ot \id) + r \circ(\id \ot P)), $$
	if and only if $N_P : A \oplus M \rightarrow A \oplus M, (a, x) \mapsto (P(x), 0)$ is a Nijenhuis operator on the semi-direct product associative algebra $A \ltimes M = (A \oplus M, m+l+r)$.
\end{prop}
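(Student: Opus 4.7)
The plan is to verify the statement by direct unpacking of the Nijenhuis relation~\eqref{Eq: Nijenhuis relation in terms of maps} for $N_P$ on arbitrary elements $(a,x), (b,y) \in A \oplus M$, and to observe that it collapses to the defining identity of a relative Rota-Baxter operator of weight zero. The crucial structural observation is that $N_P$ takes values in $A \oplus 0$ and annihilates the $A$-summand; equivalently $N_P \circ N_P = 0$, and the $A$-entries $a, b$ of the test elements are destined to disappear from both sides of the relation.

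First I would compute the left-hand side. Using the semi-direct product formula $(a,x) \cdot_{\ltimes} (b,y) = (ab,\, ay + xb)$ together with $N_P((a,x)) = (P(x), 0)$, one immediately gets $N_P((a,x)) \cdot_{\ltimes} N_P((b,y)) = (P(x) \cdot P(y),\, 0)$, since both $M$-parts of the inputs vanish. For the right-hand side I would expand the three internal products as
$$N_P((a,x)) \cdot_{\ltimes} (b,y) = (P(x) b,\, P(x) y),\quad (a,x) \cdot_{\ltimes} N_P((b,y)) = (a P(y),\, x P(y)),$$
and $N_P((a,x) \cdot_{\ltimes} (b,y)) = (P(ay + xb),\, 0)$, then combine them with the signs $+, +, -$ dictated by \eqref{Eq: Nijenhuis relation in terms of maps} and apply $N_P$. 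Because $N_P$ erases the $A$-component, the contributions involving $a, b$ and $P(ay+xb)$ all disappear, while the $M$-parts collapse to $P(x) y + x P(y)$, producing $(P(P(x) y + x P(y)),\, 0)$.

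Comparing the two sides, the Nijenhuis identity for $N_P$ reduces to the single scalar equation
$$P(x) \cdot P(y) = P\bigl(P(x) y + x P(y)\bigr) \quad \text{for all } x, y \in M,$$
which is precisely the defining relation of a relative Rota-Baxter operator of weight zero. Both directions of the equivalence follow simultaneously, and the $A$-entries $a, b$ drop out consistently from both sides, as anticipated from the structural observation above. There is no genuine obstacle here: the only care needed is bookkeeping between $A$- and $M$-components of each intermediate product, together with the verification that the image of $N_P$ is concentrated in the $A$-summand so that $N_P$ evaluated on any element depends only on its $M$-component.
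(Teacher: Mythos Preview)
Your argument is correct. The computation is straightforward and you have carried it out accurately: the key point is exactly that $N_P$ kills the $A$-component and lands in $A\oplus 0$, so that on the right-hand side only the $M$-parts $P(x)y$ and $xP(y)$ survive after the outer application of $N_P$, and the Nijenhuis identity collapses to $P(x)\cdot P(y)=P\bigl(P(x)y+xP(y)\bigr)$ for all $x,y\in M$.

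Note, however, that the paper does not supply its own proof of this proposition; it is simply quoted from \cite[2.5 Proposition]{Das2020} as motivation for the homotopy generalisation in Theorem~\ref{Thm: RBA and NjA}. Your direct verification is the natural proof and is essentially what one finds in Das's paper.
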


For a graded space $V$, it is well known that $(V, \{m_{n}\}_{n \geqslant 1})$ being an $A_{\infty}$-algebra is equivalent to its suspension $(sV, \{\frakm_{n}\}_{n \geqslant 1})$ being an $A_{\infty}[1]$-algebra, where the correspondence is given by $m_{n} = s^{-1} \circ \frakm_{n} \circ s^{\ot n}$ as described in Equation~\eqref{Eq: first can isom}.
The concepts of left modules, right modules and bimodules over an $A_\infty$-algebra were introduced by Burke \cite{Burke}, Keller \cite{Keller01, Keller06} and Tradler \cite{Tradler2008}.
Das and Mishra \cite{DasMis22} gave the corresponding concepts in the framework of $A_\infty[1]$-algebras.
Using the notion of brace operation, the concepts of $A_\infty[1]$-algebras and $A_\infty[1]$-bimodules can be equivalently defined as follows.


\begin{defn} \label{Def: A infinity[1] algebras}
	Let $\frakA$ be a graded space. It is called an \textit{$A_\infty[1]$-algebra} if $\frakA$ is endowed with a family of graded linear operators $\frakm_k:\frakA^{\ot k}\rightarrow \frakA, k\geqslant 1$, with $|\frakm_k|=-1$, subjecting to the following identities: for $n \geq 1$,
	\begin{align}\label{Eq: A infty[1] algebras}
		\sum\limits_{i+j=n+1} \frakm_{j} \{\frakm_{i}\} =0,
	\end{align}
	where $ -\{-\} $ is the brace operation.
\end{defn}

\begin{defn}\label{Def: A-infty[1] module}
	Let $(\frakA, \{\frakm_{n}\}_{n \geqslant 1})$ be an $A_\infty[1]$-algebra and $\frakM$ be a dg space with differential $\varrho_{1}$ of degree $-1$. An \textit{$\frakA$-bimodule structure}
	on $\frakM$ is a family of graded linear operators $\varrho_k = \sum\limits_{p=1}^{k} \varrho_{k}^{p}: (\frakA \oplus \frakM)^{\ot k} \rightarrow \frakM$, $k\geqslant 2$, where $ \varrho_{k}^{p} : \frakA \ot \cdots \ot \frakA \ot \underset{p-th}{\frakM} \ot \frakA \ot \cdots \ot \frakA \rightarrow \frakM$, $1 \leqslant p \leqslant k$, with $|\varrho_{k}^{p}|=-1$, subjecting to: for $n \geq 1$,
	\begin{align}\label{Eq: A infty [1] module}
		\sum\limits_{i+j=n+1} \varrho_{j} \{\frakm_{i}\} + \varrho_{j} \{\varrho_{i}\}=0.
	\end{align}	
\end{defn}

Inspired by the construction of semi-direct product of an associative algebra by a bimodule over it, we have the following general construction:
\begin{prop} \label{Prop: semi-direct sum A-infty[1]}
	Let $(\frakA, \{\frakm_{n}\}_{n \geqslant 1})$ be an $A_{\infty}[1]$-algebra and $(\frakM, \{\varrho_{n}\}_{n \geqslant 1})$ be an $\frakA$-bimodule. Then the graded space $\frakA \oplus \frakM $ has a natural $A_{\infty}[1]$-algebra structure given by
	$$\frakh_{k}:= \frakm_{k} + \varrho_{k} : (\frakA \oplus \frakM)^{\ot k} \rightarrow \frakA \oplus \frakM,\ k\geqslant 1.$$
	This $A_{\infty}[1]$-algebra is called the \textit{semi-direct product} of $\frakA$ and $\frakM$, denoted by $\frakA \ltimes \frakM$.
\end{prop}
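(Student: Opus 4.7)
The plan is to verify the single $A_\infty[1]$-identity
$$\sum_{i+j=n+1}\frakh_j\{\frakh_i\}=0,\qquad n\geqslant 1,$$
by expanding $\frakh_k=\frakm_k+\varrho_k$, where both are extended by zero outside their natural domains on $(\frakA\oplus\frakM)^{\otimes k}$, and then projecting the output onto the two summands $\frakA$ and $\frakM$. I would fix the convention that $\frakm_k$ vanishes on any tensor in $(\frakA\oplus\frakM)^{\otimes k}$ containing at least one $\frakM$-factor, while $\varrho_k$ vanishes on any tensor not containing exactly one $\frakM$-factor. Under this convention, $\frakm_k$ always lands in $\frakA$ and $\varrho_k$ always lands in $\frakM$. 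Expanding bilinearly gives
$$\frakh_j\{\frakh_i\}=\frakm_j\{\frakm_i\}+\frakm_j\{\varrho_i\}+\varrho_j\{\frakm_i\}+\varrho_j\{\varrho_i\},$$
and it suffices to show that the sum over $i+j=n+1$ of these four families vanishes in each of the $\frakA$- and $\frakM$-components.

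For the $\frakA$-component, only $\frakm_j\{\frakm_i\}$ contributes: the term $\frakm_j\{\varrho_i\}$ feeds an $\frakM$-valued output of $\varrho_i$ into an $\frakA$-slot of $\frakm_j$, which $\frakm_j$ annihilates, while both $\varrho_j\{-\}$ terms land in $\frakM$. The surviving sum is precisely \eqref{Eq: A infty[1] algebras} for $\frakm$, hence zero. For the $\frakM$-component, both $\frakm_j\{-\}$ terms drop out, and the claim reduces to
$$\sum_{i+j=n+1}\Big(\varrho_j\{\frakm_i\}+\varrho_j\{\varrho_i\}\Big)=0.$$
A short position-counting argument shows that both summands are nonzero only when the ambient input tensor contains exactly one $\frakM$-factor: in $\varrho_j\{\frakm_i\}$, the inner $\frakm_i$ must receive only $\frakA$-inputs, and the unique $\frakM$-factor must occupy the distinguished $\frakM$-slot of the outer $\varrho_j$; in $\varrho_j\{\varrho_i\}$, the inner $\varrho_i$ must be inserted into the $\frakM$-slot of $\varrho_j$ and absorb the unique $\frakM$-factor. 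With this identification, the required identity is exactly the bimodule relation \eqref{Eq: A infty [1] module}.

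The whole argument is essentially bookkeeping, and I do not foresee any substantive obstacle. The only point requiring care is sign consistency: the Koszul signs emerging from the brace operation in the mixed terms $\varrho_j\{\frakm_i\}$ and $\varrho_j\{\varrho_i\}$ must match those dictated by Definition~\ref{Def: A-infty[1] module}. This is automatic because both the $A_\infty[1]$-algebra axiom and the bimodule axiom are already phrased in the same brace formalism, so no reconciliation of sign conventions is needed.
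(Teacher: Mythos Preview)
Your proposal is correct and follows essentially the same approach as the paper: expand $\frakh_j\{\frakh_i\}$ into four terms, observe that $\frakm_j\{\varrho_i\}=0$ since $\varrho_i$ outputs into $\frakM$ while $\frakm_j$ requires $\frakA$-inputs, then apply the $A_\infty[1]$-axiom \eqref{Eq: A infty[1] algebras} and the bimodule axiom \eqref{Eq: A infty [1] module} to the remaining three terms. Your additional position-counting and sign-consistency remarks are sound but not strictly needed, since both axioms are already phrased in the brace formalism on $(\frakA\oplus\frakM)^{\otimes n}$; the paper's proof is correspondingly terser.
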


\begin{proof}
	Note that, for any $i, j \geqslant 1$, $\frakm_{j}\{\varrho_{i}\}$ is always equal to $0$ because the output of $\varrho_{i}$ falls in space $\frakM$ however any input of $\frakm_{j}$ falls in space $\frakA$.
	For any $n \geqslant 1$, according to Equations~\eqref{Eq: A infty[1] algebras}-\eqref{Eq: A infty [1] module}, we have
	\begin{align*}
		\sum\limits_{i+j=n+1} \frakh_{j} \{\frakh_{i}\}
		= \sum\limits_{i+j=n+1} (\frakm_{j} \{\frakm_{i}\} + \varrho_{j} \{\frakm_{i}\} + \varrho_{j} \{\varrho_{i}\})
		= 0.
	\end{align*}
	This means that $\{\frakh_{n}\}_{n \geqslant 1}$ is indeed an $A_{\infty}[1]$-algebra structure on the space $\frakA \oplus \frakM$.
\end{proof}

%

{We now recall the definition of homotopy relative Rota-Baxter associative algebras of weight $0$. As $A_\infty$-algebras generalize associative algebras, and homotopy Nijenhuis associative algebras generalize Nijenhuis associative algebras, homotopy relative Rota-Baxter associative algebras of weight $0$ represent the homotopy version of Rota-Baxter associative algebras of weight $0$.
}

\begin{defn} \cite{DasMis22,Song24} \label{defn. homotopy relative Rota-Baxter associative algebras}
	Let $A$ and $M$ be two graded spaces. Assume that $(sA, \{\frakm_{n}\}_{n \geqslant 1})$ is an $A_{\infty}[1]$-algebra
	and $(sM, \{\varrho_{n}\}_{n \geqslant 1})$ is an $sA$-bimodule.
	A degree $-1$ element $B = \sum\limits_{i \geqslant 1} B_{i} \in \Hom(\overline{T^c}(sM), A)$ with $B_{i}:(sM)^{\ot i} \rightarrow A$ is called a \textit{homotopy relative Rota-Baxter operator of weight $0$ on $A$ with respect to $M$} if the following equation holds for all $n \geqslant 1$:
	\begin{eqnarray} \label{Eq: homotopy-rl-rb-operator Ass}
		\sum_{r_1+\cdots+r_p =n \atop
			r_1, \dots, r_p \geqslant 1, p \geqslant 1}
		\frakm_{p}\{sB_{r_{1}}, \dots, sB_{r_{p}}\} 	
		= \sum_{r_1+\cdots+r_p =n \atop
			r_1, \dots, r_p \geqslant 1, p \geqslant 1}
		(sB_{r_{1}})\{\varrho_{p}\{sB_{r_{2}}, \dots, sB_{r_{p}}\}
		\}.
	\end{eqnarray}
	In this case, $(A, \{\frakm_{n}\}_{n \geqslant 1}, M, \{\varrho_{n}\}_{n \geqslant 1}, B)$ is called a \textit{homotopy relative Rota-Baxter associative algebra of weight $0$}.
\end{defn}
\begin{remark}By explicitly expanding Equation~\eqref{Eq: homotopy-rl-rb-operator Ass} for small $n$, it can be observed that the operator $B_1$ serves as a relative Rota-Baxter operator up to homotopy with respect to $(\frak{m}_2, \varrho_2^1,\varrho_2^2)$, with the homotopy provided by the operator $B_2$.
\end{remark}

Recall that a homotopy Nijenhuis associative algebra structure on a graded space $V$ is defined to be a Maurer-Cartan element in the $L_{\infty}$-algebra $\frakC_{\NjA}(V)$
as introduced in Definition~\ref{Def: homotopy NjA algebras}. More explicitly, a Maurer-Cartan element in $\frakC_{\NjA}(V)$
is an element $$\alpha=(\{b_i\}_{i\geqslant 1}, \{R_i\}_{i\geqslant 1})\in \frakC_{\NjA}(V)_{-1}
=\Hom(\overline{T^c}(sV), sV)_{-1}\oplus \Hom(\overline{T^c}(sV), V)_{-1} $$
with $b_i:(sV)^{\ot i}\rightarrow sV$ and $R_i:(sV)^{\ot i}\rightarrow V$ which satisfy Equations~\eqref{Eq: A-infinity NjA}-\eqref{Eq: homotopy-NjA-operator NjA}.

We establish a connection between homotopy relative Rota-Baxter associative algebras of weight $0$ and homotopy Nijenhuis associative algebras as follows:

\begin{thm} \label{Thm: RBA and NjA}
	Let $A$ and $M$ be two graded spaces.
	Assume that $(sA, \{\frakm_{n}\}_{n \geqslant 1})$ is an $A_{\infty}[1]$-algebra and $(sM, \{\varrho_{n}\}_{n \geqslant 1})$ is an $sA$-bimodule. A degree $-1$ element $B = \sum\limits_{i \geqslant 1} B_{i} \in \Hom(\overline{T^c}(sM), A)$ with $B_{i}:(sM)^{\ot i} \rightarrow A$ is a homotopy relative Rota-Baxter operator of weight $0$ on $A$ with respect to $M$ if and only if
	$A \oplus M$ is a homotopy Nijenhuis associative algebra corresponding to the Maurer-Cartan element $(\{b_n\}_{n\geqslant 1}, \{R_n\}_{n\geqslant 1})$ with $b_{n} = \frakm_{n} + \varrho_{n}$, $R_{n} = B_{n}$, $n \geqslant 1$.
\end{thm}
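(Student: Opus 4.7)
My plan is to reduce the two Maurer--Cartan equations of Definition~\ref{Def: homotopy NjA algebras} for the candidate structure $(A\oplus M, \{b_n\}, \{R_n\})$ to the single homotopy relative Rota--Baxter condition \eqref{Eq: homotopy-rl-rb-operator Ass}. The Stasheff identity \eqref{Eq: A-infinity NjA} for $b_n = \frakm_n + \varrho_n$ is automatic from Proposition~\ref{Prop: semi-direct sum A-infty[1]} applied to $(\frakm_n)$ and $(\varrho_n)$, so the whole content lies in analyzing equation~\eqref{Eq: homotopy-NjA-operator NjA}.

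Set $V = A \oplus M$, and interpret $R_n = B_n$ as the map $(sV)^{\otimes n} \to V$ that agrees with $B_n$ on $(sM)^{\otimes n}$ (followed by the inclusion $A \hookrightarrow V$) and vanishes on every other summand of $(sV)^{\otimes n}$. The key step is to evaluate each summand
\[
(sR_{r_1})\{\cdots\{sR_{r_t}\{b_p\{sR_{r_{t+1}}, \ldots, sR_{r_p}\}\}\}\cdots\}
\]
of \eqref{Eq: homotopy-NjA-operator NjA} on an arbitrary pure tensor $sv_1 \otimes \cdots \otimes sv_n$ with each $sv_i$ lying in $sA$ or in $sM$, and to classify which summands survive.

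The underlying bookkeeping is type-tracking. Each $sR_{r_i}$ is nonzero only when all of its $r_i$ inputs lie in $sM$, and its output then lies in $sA$; $\frakm_p$ is supported on $(sA)^{\otimes p}$ with output in $sA$; and $\varrho_p = \sum_q \varrho_p^q$ is supported on tensors with exactly one $sM$-entry and outputs in $sM$. Propagating these constraints from the innermost $b_p$ outwards, I would argue: (a) for $t \geqslant 2$ the output of $sR_{r_t}$ lies in $sA$ but the slot it fills in $sR_{r_{t-1}}$ demands an $sM$-element, killing every such term; (b) for $t = 1$ the $\frakm_p$-summand of $b_p$ outputs in $sA$ and so cannot feed into $sR_{r_1}$, while the $\varrho_p$-summand survives precisely when the unique direct input slot of $b_p$ receives an $sM$-leaf and all other leaves (feeding the $sR$-vertices) lie in $sM$; (c) for $t = 0$ all $p$ inputs of $b_p$ come from $sR$-outputs in $sA$, so only $\frakm_p$ survives and all leaves must again lie in $sM$; (d) on any tensor containing at least one $sA$-leaf, every contribution vanishes.

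Combining these, \eqref{Eq: homotopy-NjA-operator NjA} reduces on $(sM)^{\otimes n}$-inputs to
\[
\sum_{\substack{r_1+\cdots+r_p=n\\ r_i\geqslant 1,\, p\geqslant 1}} \frakm_p\{sB_{r_1},\ldots,sB_{r_p}\} \;=\; \sum_{\substack{r_1+\cdots+r_p=n\\ r_i\geqslant 1,\, p\geqslant 1}} sB_{r_1}\{\varrho_p\{sB_{r_2},\ldots,sB_{r_p}\}\},
\]
which is precisely \eqref{Eq: homotopy-rl-rb-operator Ass}, the sign $(-1)^t$ in \eqref{Eq: homotopy-NjA-operator NjA} distinguishing the two sides; on other inputs the equation is the trivial identity $0=0$. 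This yields both directions of the equivalence at once. The hard part will be the sign and position bookkeeping in case~(b): one must match the sum over positions of the direct slot inside $\varrho_p = \sum_q \varrho_p^q$ with the sum over insertion positions of $\varrho_p\{\cdots\}$ into $sR_{r_1}$, and verify that the product of the overall $(-1)^{1}$ and the intrinsic Koszul signs of the two braces reproduces the sign convention on the right-hand side of \eqref{Eq: homotopy-rl-rb-operator Ass} term by term.
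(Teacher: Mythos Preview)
Your proposal is correct and follows essentially the same route as the paper: reduce \eqref{Eq: A-infinity NjA} to Proposition~\ref{Prop: semi-direct sum A-infty[1]}, then split $b_p=\frakm_p+\varrho_p$ in \eqref{Eq: homotopy-NjA-operator NjA} and use the type constraints on $sB_i$, $\frakm_p$, $\varrho_p$ to kill all summands except $t=0$ for $\frakm_p$ and $t=1$ for $\varrho_p$, leaving exactly \eqref{Eq: homotopy-rl-rb-operator Ass}. The paper does this more tersely at the level of operations rather than by evaluating on pure tensors, but the argument is the same; your items (a)--(d) are simply the justification the paper leaves implicit.

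One remark: your closing worry about sign and position bookkeeping in case~(b) is unnecessary. The $t=1$ term for the $\varrho_p$-summand is \emph{literally} $(-1)^1\,(sB_{r_1})\{\varrho_p\{sB_{r_2},\ldots,sB_{r_p}\}\}$, the same brace expression that appears on the right-hand side of \eqref{Eq: homotopy-rl-rb-operator Ass}; there is no separate sum over the components $\varrho_p^q$ to reconcile, since both sides use the full $\varrho_p$. So the only sign to track is the global $(-1)^t$, and the identity becomes ``LHS of \eqref{Eq: homotopy-rl-rb-operator Ass} minus RHS of \eqref{Eq: homotopy-rl-rb-operator Ass} $=0$''.
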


\begin{proof}
	Let $B$ be a homotopy relative Rota-Baxter operator of weight $0$ on $A$ with respect to $M$. To prove that $(\{b_n\}_{n\geqslant 1}, \{R_n\}_{n\geqslant 1})$ is a Maurer-Cartan element in the $L_{\infty}$-algebra $\frakC_{\NjA}(A \oplus M)$,
	 one just needs to check that the following identities hold:
	\begin{eqnarray}
			\label{Eq: A-infinity NjA check 1}
			\sum_{i=1}^{n} b_{n-i+1}\{b_{i}\} = 0,
		\end{eqnarray}
	\begin{eqnarray}
			\label{Eq: homotopy-NjA-operator NjA check 2}
			\sum_{ r_1+\cdots+r_p =n \atop
					r_1, \dots, r_p \geqslant 1, p \geqslant 1 } \sum_{ t=0}^{p} (-1)^{t}
			(sR_{r_{1}})
			\{ \cdots
			\{
			sR_{r_{t}}
			\{b_{p}
			\{sR_{r_{t+1}}, \dots, sR_{r_{p}}\}
			\}
			\}
			\cdots \} = 0.
		\end{eqnarray}
	On the one hand, Equation~\eqref{Eq: A-infinity NjA check 1} holds since $\{b_{n}\}_{n \geqslant 1}$ is exactly the $A_{\infty}[1]$-algebra structure of the semi-direct product $sA \ltimes sM$ as stated in Proposition~\ref{Prop: semi-direct sum A-infty[1]}. On the other hand, Equation~\eqref{Eq: homotopy-rl-rb-operator Ass} implies
	\begin{align*}
			&\ \sum_{ r_1+\cdots+r_p =n \atop
					r_1, \dots, r_p \geqslant 1, p \geqslant 1 }
			\sum_{ t=0}^{p} (-1)^{t}
			(sR_{r_{1}})\{ \cdots\{sR_{r_{t}}
			\{b_{p}
			\{sR_{r_{t+1}}, \dots, sR_{r_{p}}\}
			\}\}\cdots \} \\
			= &\
			\sum_{ r_1+\cdots+r_p =n \atop
					r_1, \dots, r_p \geqslant 1, p \geqslant 1 }
			\sum_{ t=0}^{p} (-1)^{t}
			(sB_{r_{1}})\{ \cdots\{sB_{r_{t}}
			\{\frakm_{p}
			\{sB_{r_{t+1}}, \dots, sB_{r_{p}}\}
			\}\}\cdots \}\\
			&\ + \sum_{ r_1+\cdots+r_p =n \atop
					r_1, \dots, r_p \geqslant 1, p \geqslant 1 }
			\sum_{ t=0}^{p} (-1)^{t}
			(sB_{r_{1}})\{ \cdots\{sB_{r_{t}}
			\{\varrho_{p}
			\{sB_{r_{t+1}}, \dots, sB_{r_{p}}\}
			\}\}\cdots \}\\
			= &\
			\sum_{ r_1+\cdots+r_p =n \atop
					r_1, \dots, r_p \geqslant 1, p \geqslant 1 } 
			\frakm_{p}
			\{sB_{r_{1}}, \dots, sB_{r_{p}}\}
			+ \sum_{ r_1+\cdots+r_p =n \atop
					r_1, \dots, r_p \geqslant 1, p \geqslant 1 } 
			(sB_{r_{1}})
			\{\varrho_{p}
			\{sB_{r_{2}}, \dots, sB_{r_{p}}\}\}	\\
			= &\ \ 0.
		\end{align*}
	So Equation~\eqref{Eq: homotopy-NjA-operator NjA check 2} also holds. Thus $(\{b_n\}_{n\geqslant 1}, \{R_n\}_{n\geqslant 1})$ defines a homotopy Nijenhuis associative algebra structure on $A\oplus M$.
	
	The proof in the opposite direction is obvious.
\end{proof}

Denote by $\mathfrak{RBA}^{rel}$ the coloured operad for relative Rota-Baxter associative algebras of weight $0$, and by $\mathfrak{RBA}^{rel}_{\infty}$ the coloured dg operad for homotopy relative Rota-Baxter associative algebras of weight $0$.
Here is a direct corollary of Theorem~\ref{Thm: RBA and NjA}.

\begin{cor} \label{Cor: from relRBA infty to NiA infty}
	View $\mathfrak{RBA}^{rel}_{\infty}$ as a dg operad by forgetting its colours, then there is a dg operad morphism from $\NjAoperad_{\infty}$ to $\mathfrak{RBA}^{rel}_{\infty}$ which lifts the operad morphism from $\NjAoperad$ to $\mathfrak{RBA}^{rel}$, i.e., we have the following commutative diagram of dg operads:
	\[\xymatrix{\NjAoperad_{\infty}\ar[r]\ar[d]&\mathfrak{RBA}^{rel}_\infty \ar[d]\\
\NjAoperad\ar[r]&\mathfrak{RBA}^{rel}.	
	}\]
\end{cor}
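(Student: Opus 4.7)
The plan is to use that $\NjAoperad_\infty=\Omega(\NjAoperad^\ac)$ is quasi-free, so a dg operad morphism out of it to any dg operad is uniquely determined by the images of the generators $m_n$ ($n\geqslant 2$) and $P_n$ ($n\geqslant 1$), subject to commutation with the differentials on generators. I would define a candidate morphism $\Psi:\NjAoperad_\infty\to \mathfrak{RBA}^{rel}_\infty$ (with colours forgotten) on generators by
\[
\Psi(m_n) \;:=\; \mathfrak{m}_n+\varrho_n,\qquad \Psi(P_n)\;:=\;B_n,
\]
where $\mathfrak{m}_n$, $\varrho_n=\sum_{p=1}^n\varrho_n^p$ and $B_n$ denote the generating operations of $\mathfrak{RBA}^{rel}_\infty$ (after identifying the two colours with the ambient space $A\oplus M$). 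This assignment mirrors precisely the bijection of Theorem~\ref{Thm: RBA and NjA} between homotopy relative Rota-Baxter data and homotopy Nijenhuis data.

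Next I would verify $\Psi\circ\partial=\partial\circ\Psi$ on each generator. For $m_n$, the relation $\Psi(\partial m_n)=0$ in $\mathfrak{RBA}^{rel}_\infty$ is exactly the $A_\infty[1]$-relation for the semi-direct product operations $\mathfrak{h}_n=\mathfrak{m}_n+\varrho_n$, which holds by Proposition~\ref{Prop: semi-direct sum A-infty[1]} since the generators $\mathfrak{m}_n,\varrho_n$ satisfy the defining equations~\eqref{Eq: A infty[1] algebras}--\eqref{Eq: A infty [1] module} universally in $\mathfrak{RBA}^{rel}_\infty$. For $P_n$, the relation $\Psi(\partial P_n)=0$ is the Nijenhuis $L_\infty$-identity~\eqref{Eq: homotopy-NjA-operator NjA check 2} specialised to $b_n=\mathfrak{h}_n$, $R_n=B_n$; the computation in the proof of Theorem~\ref{Thm: RBA and NjA} splits it into two summands, one of which is again an $A_\infty[1]$-relation and the other is the defining relation~\eqref{Eq: homotopy-rl-rb-operator Ass} of a homotopy relative Rota-Baxter operator, both of which hold universally in $\mathfrak{RBA}^{rel}_\infty$. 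The required commutativity of the square with the classical map $\NjAoperad\to\mathfrak{RBA}^{rel}$ then follows by restriction to degree zero: $\Psi$ sends $m_2\mapsto\mathfrak{m}_2+\varrho_2$ and $P_1\mapsto B_1$, reproducing exactly the Das construction \cite[2.5 Proposition]{Das2020} recalled at the beginning of Section~\ref{Section: relation between HrelRBA and HNjA}.

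The main obstacle is bookkeeping and signs: the differential of $\NjAoperad_\infty$ is written in the intricate tree-monomial formalism of~\eqref{Eq. homotopy NjA eq1}--\eqref{Eq. homotopy NjA eq2} with exponent $\alpha'$, whereas Theorem~\ref{Thm: RBA and NjA} is phrased in the brace-operation formalism of Section~\ref{Section: From minimal model to Linifnity algebras NjA} via Maurer-Cartan elements in $\frakC_{\NjA}(A\oplus M)$. The heart of the proof lies in checking that these two formalisms produce matching signs term-by-term, or equivalently in packaging the naturality of Theorem~\ref{Thm: RBA and NjA} over all graded pairs $(A,M)$ into a Yoneda-type argument on endomorphism operads that promotes the pointwise algebraic correspondence to an honest morphism of dg operads. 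Once this is done, the lifting property and the absence of further generators in $\NjAoperad_\infty$ guarantee that $\Psi$ is unique with the desired property.
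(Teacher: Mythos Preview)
Your strategy—define $\Psi$ on generators of the quasi-free operad $\NjAoperad_\infty$ and then verify compatibility with the differential—is the correct one, and is precisely the operadic content the paper intends when it declares this a ``direct corollary'' of Theorem~\ref{Thm: RBA and NjA} (the paper gives no further proof).

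There is, however, a genuine conceptual slip in your verification step. You write that one must check ``$\Psi(\partial m_n)=0$'' and ``$\Psi(\partial P_n)=0$'', justifying these by the claim that the defining equations~\eqref{Eq: A infty[1] algebras}--\eqref{Eq: A infty [1] module} and~\eqref{Eq: homotopy-rl-rb-operator Ass} ``hold universally in $\mathfrak{RBA}^{rel}_\infty$''. Both assertions are wrong. What must be checked is $\Psi(\partial m_n)=\partial\,\Psi(m_n)$ and $\Psi(\partial P_n)=\partial\,\Psi(P_n)$, as identities between elements of the \emph{free} graded operad underlying $\mathfrak{RBA}^{rel}_\infty$. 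Since $\mathfrak{RBA}^{rel}_\infty$ is itself quasi-free, no relation of the form $\sum\frakm_j\{\frakm_i\}=0$ holds there; rather, that expression (up to sign) \emph{is} $\partial\frakm_n$. So the task is to match $\partial(\frakm_n+\varrho_n)$ term-by-term with $\Psi(\partial m_n)=\sum(\frakm_{n-j+1}+\varrho_{n-j+1})\{\frakm_j+\varrho_j\}$—which works because the colour constraints force $\frakm_j\{\varrho_i\}=0$ after forgetting colours—and analogously to match $\partial B_n$ with $\Psi(\partial P_n)$. The algebraic manipulations from the proofs of Proposition~\ref{Prop: semi-direct sum A-infty[1]} and Theorem~\ref{Thm: RBA and NjA} are exactly what you need, but they must be read as equalities between differential formulas in a free operad, not as relations that are ``satisfied''. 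Once you rephrase accordingly, your argument goes through; the Yoneda packaging you mention in the last paragraph is a legitimate alternative that sidesteps the explicit sign-matching.
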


\medskip

\textbf{Acknowledgements} 

The first, second, and fourth authors were supported by the National Natural Science Foundation of China (No. 12071137), the fourth author was also financed by Key Laboratory of MEA (Ministry of Education), by Shanghai Key Laboratory of PMMP (No. 22DZ2229014).
The third author was supported by the National Natural Science Foundation of China (No. 12101183), and also supported by the Postdoctoral Fellowship Program of CPSF under Grant Number (GZC20240406). 


\textbf{Conflict of Interest}

None of the authors has any conflict of interest in the conceptualization or publication of this
work.

\textbf{Data availability}

Data sharing is not applicable to this article as no new data were created or analyzed in this study.

\medskip

\end{document}